\newtheorem{thm}{Theorem}[section]
\newtheorem{defn}[thm]{Definition}
\newtheorem{corollary}[thm]{Corollary}
\newtheorem{lemma}[thm]{Lemma}
\theoremstyle{remark}
\newtheorem{remark}[thm]{Remark}
\def\XXint#1#2#3{{\setbox0=\hbox{$#1{#2#3}{\int}$}
\vcenter{\hbox{$#2#3$}}\kern-.5\wd0}}
\newcommand\cbrk{\text{$]$\kern-.15em$]$}}
\newcommand\opar{\text{\,\raise.2ex\hbox{${\scriptstyle
|}$}\kern-.34em$($}}
\newcommand\cpar{\text{$)$\kern-.34em\raise.2ex\hbox{${\scriptstyle |}$}}\,}
\def\<{\langle}
\def\>{\rangle}
\def\E{{\mathbb E}}
\newcommand\bL{\mathbb{L}}
\newcommand\bR{\mathbb{R}}
\newcommand\bH{\mathbb{H}}
\newcommand\bZ{\mathbb{Z}}
\newcommand\bD{\mathbb{D}}
\newcommand\bS{\mathbb{S}}
\newcommand\bN{\mathbb{N}}
\newcommand\cA{\mathcal{A}}
\newcommand\cB{\mathcal{B}}
\newcommand\cD{\mathcal{D}}
\newcommand\cF{\mathcal{F}}
\newcommand\cK{\mathcal{K}}
\newcommand\cP{\mathcal{P}}
\newcommand\cT{\mathcal{T}}
\newcommand\cO{\mathcal{O}}
\def\R {{\mathbb R}}
\newcommand\frH{\mathfrak{H}}
\newcommand{\mysection}[1]{\section{#1}
\setcounter{equation}{0}}
\newcommand{\one}{\ensuremath{\mathds 1}}
\newcommand{\prob}{\ensuremath{\mathbb P}}
\newcommand{\pred}{\ensuremath{\mathcal{P}}}
\newcommand{\cont}{\ensuremath{\mathcal{C}}}
\newcommand{\abs}[1]{\ensuremath{\lvert #1 \rvert}}
\newcommand{\Abs}[1]{\ensuremath{\big\lvert  #1 \big\rvert}}
\newcommand{\nnrm}[2]{\ensuremath{\lVert #1 \rVert_{#2}}}
\newcommand{\gnnrm}[2]{\ensuremath{\big\lVert #1 \big\rVert_{#2}}}
\newcommand{\sgnnrm}[2]{\ensuremath{\Big\lVert #1 \Big\rVert_{#2}}}
\newcommand{\nrklam}[1]{(#1)}
\newcommand{\grklam}[1]{\big(#1\big)}
\newcommand{\sgrklam}[1]{\Big(#1\Big)}
\newcommand{\ssgrklam}[1]{\bigg(#1\bigg)}
\newcommand{\ggklam}[1]{\big\{#1\big\}}
\newcommand{\ssggklam}[1]{\bigg\{#1\bigg\}}
\newcommand{\sgeklam}[1]{\Big [#1 \Big ]}
\newcommand{\domain}{\ensuremath{\mathcal{O}}}   
\newcommand{\dist}{\ensuremath{\rho}}   
\DeclareFontFamily{U}{matha}{\hyphenchar\font45}
\DeclareFontShape{U}{matha}{m}{n}{
      <5> <6> <7> <8> <9> <10> gen * matha
      <10.95> matha10 <12> <14.4> <17.28> <20.74> <24.88> matha12
      }{}
\DeclareSymbolFont{matha}{U}{matha}{m}{n}
\DeclareFontFamily{U}{mathx}{\hyphenchar\font45}
\DeclareFontShape{U}{mathx}{m}{n}{
      <5> <6> <7> <8> <9> <10>
      <10.95> <12> <14.4> <17.28> <20.74> <24.88>
      mathx10
      }{}
\DeclareSymbolFont{mathx}{U}{mathx}{m}{n}
\DeclareMathDelimiter{\vvvert}{0}{matha}{"7E}{mathx}{"17}
\newcommand{\wso}{\ensuremath{K}}
\newcommand{\bwso}{\ensuremath{\mathbb{\wso}}}
\definecolor{felix}{rgb}{0.2,0.2,1.0} 
\definecolor{petru}{rgb}{0.7,0.1,0.1} 
\definecolor{alternative}{rgb}{0.1,0.1,0.7} %
\definecolor{detail}{rgb}{0.0,0.5,0.0} 
\newcommand{\icp}{\color{black}}
\begin{document}

\title[The stochastic heat equation on polygonal domains]
{On the regularity  of  the stochastic heat equation
 on polygonal domains in $\bR^2$}

\author{Petru A. Cioica-Licht}
\thanks{The first named author has been partially supported by the Marsden Fund Council from Government funding, administered by the Royal Society of New Zealand, and by a University of Otago Research Grant (114023.01.R.FO). The  research of the second and third author was supported by Basic Science Research Program through the National Research Foundation of Korea (NRF) 
funded by the Ministry of Education (NRF-2017R1D1A1B03033255) and (NRF-2013R1A1A2060996), respectively.
The authors would like to thank Felix Lindner for his contribution at an early stage of this manuscript}
\address{Petru A. Cioica-Licht (n\'e Cioica), Department of Mathematics and Statistics, University of Otago, PO Box~56, Dunedin 9054, New Zealand}
\email{pcioica@maths.otago.ac.nz}
\author{Kyeong-Hun Kim}
\address{Kyeong-Hun Kim, Department of Mathematics, Korea University, Anam-ro 145, Sungbuk-gu, Seoul, 02841, Republic of Korea}
\email{kyeonghun@korea.ac.kr}
\author{Kijung Lee}
\address{Kijung Lee, Department of Mathematics, Ajou University, Worldcup-ro 206, Yeongtong-gu, Suwon, 16499, Republic of Korea}
\email{kijung@ajou.ac.kr}

\subjclass[2010]{60H15; 35R60, 35K05}

\keywords{Stochastic partial differential equation,
stochastic heat equation,
weighted $L_p$-estimate,
weighted Sobolev regularity,
angular domain,
polygon,
polygonal domain,
non-smooth domain,
corner singularity}

\begin{abstract}
We establish existence, uniqueness and higher order weighted $L_p$-Sobolev regularity for the stochastic heat equation with zero Dirichlet boundary condition on angular domains and on  polygonal domains in $\mathbb{R}^2$.
We use a system of mixed weights consisting of appropriate powers of the distance to the vertexes and of the distance to the boundary to measure the regularity with respect to the space variable.
In this way we can capture the influence of both main sources for singularities: the incompatibility between noise and boundary condition on the one hand and the singularities of the boundary on the other hand.
The range of admissible powers of the distance to the vertexes is described in terms of the maximal interior angle and is sharp. 
\end{abstract}

\maketitle

\mysection{Introduction}\label{sec:Introduction}

In this article we continue the analysis started in~\cite{CioKimLee+2018} towards a refined $L_p$-theory for second order stochastic partial differential equations (SPDEs, for short) on non-smooth domains.
The main challenges in the construction of such a theory come from two effects that are known to influence the regularity of the solution:
On the one hand, the incompatibility between noise and boundary condition results in blow-ups of the higher order derivatives near the boundary---even if the boundary is smooth. 
On the other hand, the singularities of the boundary cause a similar effect in their vicinity---even if the forcing terms are deterministic.
We refer to the introduction of~\cite{CioKimLee+2018} and the literature therein for details. 

The well developed $L_p$-theory for second order SPDEs on smooth domains, 
carried out within the analytic approach initiated by N.V.~Krylov,
shows that the incompatibility between noise and boundary condition can be captured very accurately by using a system of weights based on the distance to the boundary, see, for instance,~\cite{Kim2004, KimKry2004, Kry1994,KryLot1999, KryLot1999b}.
Moreover, the results in~\cite{CioKimLee+2018} indicate that a system of weights based on the distance to a corner of the underlying domain is suitable to describe the impact of this boundary singularity on the solution.
Thus, in order to capture both effects, a system based on a combination of appropriate powers of the distance to the boundary and of the distance to the boundary singularities suggests itself.

Our primary goal in this article is to show how such a system of mixed weights can be used in order to provide higher order spatial weighted $L_p$-Sobolev regularity for second order SPDEs with zero Dirichlet boundary condition on angular domains and on polygonal domains in $\bR^2$. 
For the moment we restrict ourselves to the stochastic heat equation, since already the analysis of this equation involves many non-trivial steps and has been a persisting problem for a long time.
At the same time, we believe that in this way we can shade some light on the general strategy without getting lost in details.
Our general setting is as follows: 
Let $(w^k_t)$, $k\in\bN$, be a sequence of independent real-valued standard Brownian motions on a probability space $(\Omega,\cF,\prob)$ and let  $T\in (0,\infty)$ be a finite time horizon.
We consider the stochastic heat equation 
\begin{equation}\label{eq:SHE:Intro:a}
\left.
\begin{alignedat}{3}
du 
&= 
\grklam{ \Delta && u + f^0+f^i_{x_i}}\,dt
+
g^k\,dw^k_t \quad \text{on } \Omega\times(0,T]\times\domain,	\\
u
&=
0 && \quad \text{on } \Omega\times(0,T]\times\partial\domain,	\\
u(0)
&=
0 && \quad \text{on } \Omega\times\domain,
\end{alignedat}
\right\}	
\end{equation}
on various types of domains $\domain\subseteq\bR^d$.
Our focus lies in particular on polygonal domains and on angular domains $\domain \subseteq\bR^2$. 
Note that, as usual, here and in the sequel we use the Einstein summation convention on the repeated indexes $i$ and $k$.

Our main results address the existence, uniqueness and higher order spatial regularity of the solution to Equation~\eqref{eq:SHE:Intro:a} on angular domains and on polygonal domains $\domain\subset\bR^2$. 
By using a weight system based solely on the distance to the set of vertexes of $\domain$, we establish existence and uniqueness of a solution to Equation~\eqref{eq:SHE:Intro:a} with suitable weighted $L_p$-Sobolev regularity of order one with respect to the space variable; see Theorem~\ref{thm:ex:uni:2DCone} (angular domains) and Theorem~\ref{thm polygon main} (polygonal domains). 
The lower bound of the range~\eqref{eq:range:vertex} for the weight parameter $\theta$, which corresponds to the best integrability property of the solution near the vertex, is sharp; see also the introduction of~\cite{CioKimLee+2018}. 
Moreover, by using, in addition, appropriate powers of the distance to the boundary $\partial \domain$ we describe the behavior of higher order spatial derivatives of the solution; see Corollary~\ref{high} (angular domains) and Theorem~\ref{thm_polygons_1} (polygonal domains).  

The key estimate, which paves the way for all the results mentioned above, is presented in Theorem~\ref{lem:estim:2DCone:Theta}. 
Roughly speaking, it shows which system of weights is suitable in order to be able to lift the spatial regularity of the solution of the stochastic heat equation~\eqref{eq:SHE:Intro:a} on an angular domain with the regularity of the forcing terms. In short, it can be stated as follows:
Let 
\begin{equation}\label{domain:angular}
\cD:=\cD_{\kappa_0}:=\ggklam{x\in \bR^2: x=(r\cos\vartheta,r\sin\vartheta),\; r>0,\;\vartheta\in (0,\kappa_0)},
\end{equation}
be an angular domain with vertex at the origin and angle~$\kappa_0\in(0,2\pi)$.
Moreover, let $\dist(x):=\dist_\cD(x):=\mathrm{dist}(x,\partial\cD)$ be the distance of a point $x\in\cD$ to the boundary $\partial\cD$ of $\cD$. If $u$ is the solution to Equation~\eqref{eq:SHE:Intro:a} on $\cD$,
then, for arbitrary $m\in \bN$, $1<\Theta<p+1$ and $\theta\in\bR$,
we can estimate
\[
\E\int_0^T \ssgrklam{\sum_{\abs{\alpha}\leq m}\int_\cD \Abs{ \rho(x)^{\abs{\alpha}-1} D^\alpha u(t,x)}^p\abs{x}^{\theta-2}\ssgrklam{\frac{\rho(x)}{\abs{x}}}^{\Theta-2}\,dx}\,dt
\]
by the weighted $L_p$-norm 
\[
\E\int_0^T\int_\cD \Abs{ \abs{x}^{-1} u(t,x)}^p \abs{x}^{\theta-2}\ssgrklam{\frac{\rho(x)}{\abs{x}}}^{\Theta-2}\,dx\,dt
\]
of the solution plus appropriate weighted $L_p$-Sobolev norms of the forcing terms $f^0$, $f^i$ and $g$, of order $(m-2)\lor 0$, $m-1$, and $m-1$, respectively.
It is worth mentioning that the range for the parameter $\Theta$ in this estimate is natural and sharp, see Remark~\ref{explanation of key} for details.

As already mentioned above, there already exists a comprehensive $L_p$-regularity theory for second order SPDEs in weighted Sobolev spaces with weights based solely on the distance to the boundary, see also~\cite{Kim2014} in addition to the reference given above. Typically, the solution $u$ to Equation~\eqref{eq:SHE:Intro:a} on a domain $\domain\subseteq\bR^d$ fulfills
\[
\E\int_0^T \ssgrklam{\sum_{\abs{\alpha}\leq m}\int_\domain \Abs{\rho^{\abs{\alpha}-1}D^\alpha u}^p\rho^{\Theta-d}\,dx}\,dt<\infty,
\]
provided that the domain $\cO$ is sufficiently smooth and the free terms $f^0$, $f^i$ and $g$ are in corresponding weighted Sobolev spaces. More precisely,
on smooth domains, i.e., at least $\cont^1$, such a theory is possible with
\[
d-1<\Theta<d+p-1,
\]
see, e.g.,~\cite[Remark~2.7]{Kim2004}.
However, on non-smooth domains, only $\Theta\in (d+p-2-\varepsilon,d+p-2+\varepsilon)$ is possible with a small $\varepsilon>0$ that depends on the roughness of the boundary of the domain and is not explicitly given~\cite{Kim2014}. In particular, for large $p>2$, $\Theta=d$ is not admissible, see~\cite[Example~2.17]{Kim2014} for a typical counterexample.
Our results show that, on polygonal domains, if we use an appropriate power of the distance to the set of vertexes to control the behavior of the solution in their proximate vicinity, then $\Theta=d=2$ is possible away from the vertexes.

Our analysis takes place within the framework of the analytic approach.
The proofs of the main results rely on a mixture of Green function estimates on angular domains, suitable localization techniques and some delicate estimates for the stochastic heat equation on $\cont^1$ domains.
Alternatively, one could think of Equation~\eqref{eq:SHE:Intro:a} as an abstract Banach space valued stochastic evolution equation and try to obtain a similar theory by using the extension of the semigroup approach for SPDEs to Banach spaces developed by J.M.A.M.~van Neerven, M.C.~Veraar and L.~Weis~\cite{NeeVerWei2008, NeeVerWei2012, NeeVerWei2012b}. However, for this to succeed, one would have to (at least!) check whether the (properly defined) Dirichlet Laplacian on weighted Sobolev spaces has an appropriate functional calculus. 
Moreover, one would need a description of the domain of the square root of this operator in terms of suitable weighted Sobolev spaces.
To the best of our knowledge, both questions are not trivial and yet to be answered.
In this context it is worth mentioning that the recently developed Calder\'on-Zygmund theory for singular stochastic integrals from~\cite{LorVer2019+} together with the $L_p$-theory developed in~\cite{CioKimLee+2018} lead to an $L_q(L_p)$-theory with $q\neq p$ without making use of precise descriptions of the domains of fractional powers of the Laplacian nor of the existence of a bounded $H^\infty$-calculus, see~\cite[Example~8.12]{LorVer2019+}.

This article is organized as follows: In Section~\ref{sec:2DCone} we present and prove the main results concerning existence, uniqueness (Theorem~\ref{thm:ex:uni:2DCone}) and higher order regularity (Corollary~\ref{high}) of the stochastic heat equation on angular domains.
The proofs rely on two key estimates, which are stated in Theorem~\ref{lem:estim:2DCone:Theta} and Lemma~\ref{lem 4.5.1} and proven in detail in Section~\ref{sec:proof:lift} and Section~\ref{4}, respectively. 
Finally, in Section~\ref{sec:Polygons} we present our analysis of the stochastic heat equation on polygonal domains.
Before we start, we fix some notation.

\medskip

\noindent\textbf{Notation.}  
Throughout this article, $(\Omega,\cF,\prob)$ is a complete probability space and $\nrklam{\cF_{t}}_{t\geq0}$ is an increasing filtration of $\sigma$-fields $\cF_{t}\subset\cF$, each of which contains all $(\cF,\prob)$-null sets.
We assume that on $\Omega$ we are given a family $(w_t^k)_{t\geq0}$, $k\in\bN$, of independent one-dimensional Wiener processes relative to $\nrklam{\cF_{t}}_{t\geq0}$. By $\cP$ we denote the predictable $\sigma$-algebra on $\Omega\times (0,\infty)$ generated by $\nrklam{\cF_{t}}_{t\geq0}$ and any of its trace $\sigma$-algebras.
Moreover, $T\in(0,\infty)$ is a finite time horizon and $\Omega_T:=\Omega\times (0,T]$. 
For a measure space $(A, \cA, \mu)$, a Banach space $B$ and $p\in[1,\infty)$, we write $L_p(A,\cA, \mu;B)$ for the collection of all $B$-valued $\bar{\cA}$-measurable functions $f$ such that 
$$
\|f\|^p_{L_p(A,\cA,\mu;B)}:=\int_{A} \lVert f\rVert^p_{B} \,d\mu<\infty.
$$
Here $\bar{\cA}$ is the completion of $\cA$ with respect to $\mu$.  The Borel $\sigma$-algebra on a topological space $E$ is denoted by $\cB(E)$. We will drop $\cA$ or $\mu$ in $L_p(A,\cA, \mu;B)$  when the $\sigma$-algebra $\cA$ or the measure $\mu$  are obvious from the context. 
For functions $f$ depending on $\omega\in \Omega$, $t\geq 0$ and $x\in\bR^d$, we usually drop the argument $\omega$, and denote them by $f(t,x)$. 
If $\domain\subseteq\bR^d$ is a domain in $\bR^d$, we write $\cont^{\infty}_c(\domain)$ for the space of infinitely differentiable functions with compact support in $\domain$. Moreover, $\cont^{2}_c(\domain)$ is the space of twice continuously differentiable functions with compact support in $\domain$.
For a function $f\colon\domain\to\bR$ and any multi-index $\alpha=(\alpha_1,\ldots,\alpha_d)$, $\alpha_i\in \{0,1,2,\ldots\}$,   
$$
D^{\alpha}f(x):=\partial^{\alpha_d}_d\cdots\partial^{\alpha_1}_1u(x),
\quad x=(x^1,\ldots,x^d),
$$
where $\partial^{\alpha_i}_i=\frac{\partial^{\alpha_i}}{\partial (x^i)^{\alpha_i}}$ is the $\alpha_i$ times (generalized) derivative with respect to the $i$-th coordinate;
$f_{x^i}:=\frac{\partial}{\partial x^i}u$.
By making slight abuse of notation, for $m\in\{0,1,2,\ldots\}$, we write $D^m f$ for any (generalized) $m$-th order derivative of $f$ and for the vector of all $m$-th order derivatives. For instance, if we write $D^mf\in B$, where $B$ is a function space on $\domain$, we mean $D^\alpha\in B$ for all multi-indexes $\alpha$ with $\abs{\alpha}=m$.
The notation $f_x$ is used synonymously for $D^1f$, whereas $\nnrm{f_x}{B}:=\sum_i\nnrm{f_{x^i}}{B}$.
Throughout the article, the letter $N$ is used to denote a finite positive constant that may differ from one appearance to another, even in the same chain of inequalities.
When we write $N=N(a,b,\cdots)$, we mean that $N$ depends only on the parameters inside the parentheses.
 Moreover, $A\sim B$ is short for `$A\leq N B$ and $B\leq N A$'.

\mysection{The  stochastic heat equation on  angular domains }\label{sec:2DCone}

In this section we present our analysis for the stochastic heat equation 
\begin{eqnarray}\label{eq:SHE:Intro}
du=(\Delta u+f^0+ f^i_{x^i})\,dt+ g^k \,dw^k_t, \quad t\in (0,T], 
\end{eqnarray}
on angular domains $\cD\subseteq\bR^2$ with zero Dirichlet boundary condition and vanishing initial value.
We establish existence and uniqueness (Theorem~\ref{thm:ex:uni:2DCone}) as well as higher order spatial regularity of the solution (Corollary~\ref{high}) within a framework of weighted Sobolev spaces. 
The weights are products of appropriate powers of the distance to the vertex and of the distance to the boundary (two infinite edges and the vertex). 
The key estimate, which enables us to describe the behavior of the higher order derivatives of $u$ near the boundary even if the forcing terms behave badly near the boundary but are sufficiently smooth inside the domain, is presented in Theorem~\ref{lem:estim:2DCone:Theta}, see also Remark~\ref{explanation of key}.

 To state our results, we first introduce appropriate function spaces.
The notation is mainly borrowed from~\cite{CioKimLee+2018}. Throughout, $\cD=\cD_{\kappa_0}$ is as defined in~\eqref{domain:angular} with $\kappa_0\in(0,2\pi)$ and $\rho_\circ(x):=\abs{x}$ denotes the distance of a point $x\in \cD$ to the origin (the only vertex of $\cD$).
Let $p>1$ and $\theta\in\bR$.
We write
$$
L^{[\circ]}_{p,\theta}(\cD):=L_{p}(\cD,\cB(\cD),\rho_\circ^{\theta-2} dx;\R)
\quad\text{and}\quad
L^{[\circ]}_{p,\theta}(\cD; \ell_2):=L_p(\cD,\cB(\cD),\rho_\circ^{\theta-2} dx;\ell_2)
$$
for the weighted $L_p$-spaces of real-valued and $\ell_2$-valued functions with weight $\rho_\circ^{\theta-2}$. For $n\in\{0,1,2,\ldots\}$ let
\[
\wso^n_{p,\theta}(\cD)
=
\ssggklam{
f : \nnrm{f}{\wso^n_{p,\theta}(\cD)} := \ssgrklam{\sum_{\abs{\alpha}\leq n} 
\gnnrm{\rho_{\circ}^{\abs{\alpha}} D^\alpha f}{L_{p,\theta}^{[\circ]}(\cD)}^p}^{1/p}
< 
\infty
},
\]
and define $K^n_{p,\theta}(\cD;\ell_2)$ accordingly.  Note that
\[
K^0_{p,\theta}(\cD)=L^{[\circ]}_{p,\theta}(\cD)
\quad\text{and}\quad
K^0_{p,\theta}(\cD;\ell_2)=L^{[\circ]}_{p,\theta}(\cD;\ell_2).
\]
Moreover, we write $\mathring{\wso}^1_{p,\theta}(\cD)$ for the closure in $\wso^1_{p,\theta}(\cD)$ of the space $\cont^\infty_c(\cD)$ of test functions.

The weighted Sobolev spaces introduced above are classical examples of Kondratiev spaces. For their basic properties as well as their relevance in the analysis of elliptic partial differential equations on domains with conical singularities we refer to~\cite[Part~2]{KozMazRos1997}, see also the pioneering works \cite{Kon1967,Kon1970,KonOle1983, KufOpi1984}. 
In the sequel, we will frequently use the following basic properties.
They are mainly a consequence of the fact that 
for any multi-index $\alpha$
$$
\sup_{\cD} \left(\rho^{|\alpha|-1}_{\circ} |D^{\alpha}\rho_{\circ}|\right)\le N(\alpha)<\infty;
$$
the proof is left to the reader. 
\begin{lemma}
\label{lem 1}
Let $p>1, \theta\in \bR$ and  $n\geq 1$.  If $\alpha$ is a multi-index with 
$|\alpha|\leq n$, then
$$
\|\rho_{\circ}^{|\alpha|} D^{\alpha} f\|_{K^{n-|\alpha|}_{p,\theta}(\cD)}+\|D^{\alpha}  (\rho^{|\alpha|}_{\circ} f)\|_{K^{n-|\alpha|}_{p,\theta}(\cD)} \leq  N \|f\|_{K^{n}_{p,\theta}(\cD)},
$$
and
$$
\|f_{x}\|_{K^{n-1}_{p,\theta}(\cD)}\leq  N\|f\|_{K^{n}_{p,\theta-p}(\cD)},
$$
with $N$ independent of $f$.
\end{lemma}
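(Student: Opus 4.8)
The plan is to reduce both inequalities to two elementary ingredients: the scaling (homogeneity) behaviour of the weight $\rho_\circ(x)=\abs{x}$ and the Leibniz rule, while carefully tracking the total power of $\rho_\circ$ that multiplies each derivative of $f$. The one auxiliary estimate I would record at the outset is the extension of the stated bound to arbitrary real powers: for every $s\in\bR$ and every multi-index $\delta$,
\[
\sup_{\cD}\,\rho_\circ^{\abs{\delta}-s}\,\Abs{D^\delta\rho_\circ^{\,s}}=:N(\delta,s)<\infty.
\]
Since $\rho_\circ$ is smooth and positively homogeneous of degree one on $\cD$ (which excludes the vertex), $\rho_\circ^{\,s}$ is homogeneous of degree $s$, so $D^\delta\rho_\circ^{\,s}$ is homogeneous of degree $s-\abs{\delta}$ and bounded on the arc $\{\abs{x}=1\}$; for the integer powers $s=\abs{\alpha}$ actually needed below one may alternatively iterate the stated $s=1$ bound through the Leibniz rule.

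The second inequality needs no differentiation of the weight. For $\abs{\beta}\le n-1$ and $\gamma:=\beta+e_i$ we have $D^\beta f_{x^i}=D^\gamma f$ with $\abs{\gamma}\le n$, and the pointwise identity
\[
\Abs{\rho_\circ^{\abs{\beta}}D^\beta f_{x^i}}^{p}\rho_\circ^{\theta-2}
=\Abs{\rho_\circ^{\abs{\gamma}}D^\gamma f}^{p}\rho_\circ^{(\theta-p)-2}
\]
exhibits every term of $\nnrm{f_x}{K^{n-1}_{p,\theta}(\cD)}^p$ as one of the finitely many terms of $\nnrm{f}{K^n_{p,\theta-p}(\cD)}^p$; summing over $i$ and over $\beta$ gives the bound. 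This is exactly why $\theta$ must drop to $\theta-p$: differentiating once trades a factor $\rho_\circ$ in the norm for a factor $\rho_\circ^{-1}$ in the weight.

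For the first inequality I would expand by Leibniz and use the power bound to absorb all derivatives of the weight. For the first summand, with $\abs{\beta}\le n-\abs{\alpha}$,
\[
D^\beta\big(\rho_\circ^{\abs{\alpha}}D^\alpha f\big)
=\sum_{\gamma\le\beta}\binom{\beta}{\gamma}\,D^{\beta-\gamma}\big(\rho_\circ^{\abs{\alpha}}\big)\,D^{\alpha+\gamma}f,
\]
and $\Abs{D^{\beta-\gamma}\rho_\circ^{\abs{\alpha}}}\le N\rho_\circ^{\abs{\alpha}-\abs{\beta}+\abs{\gamma}}$ yields $\Abs{\rho_\circ^{\abs{\beta}}D^{\beta-\gamma}(\rho_\circ^{\abs{\alpha}})\,D^{\alpha+\gamma}f}\le N\rho_\circ^{\abs{\alpha+\gamma}}\Abs{D^{\alpha+\gamma}f}$ with $\abs{\alpha+\gamma}\le\abs{\alpha}+\abs{\beta}\le n$; hence each term is dominated in $L_p(\rho_\circ^{\theta-2}\,dx)$ by $\nnrm{f}{K^n_{p,\theta}(\cD)}$. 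The second summand is handled identically: expanding $D^{\alpha+\beta}(\rho_\circ^{\abs{\alpha}}f)$ and using $\Abs{D^{\alpha+\beta-\gamma}\rho_\circ^{\abs{\alpha}}}\le N\rho_\circ^{\abs{\gamma}-\abs{\beta}}$ gives $\Abs{\rho_\circ^{\abs{\beta}}D^{\alpha+\beta-\gamma}(\rho_\circ^{\abs{\alpha}})D^\gamma f}\le N\rho_\circ^{\abs{\gamma}}\Abs{D^\gamma f}$ with $\abs{\gamma}\le\abs{\alpha}+\abs{\beta}\le n$. Since the index sets are finite and depend only on $n$ and $d$, the discrete triangle inequality closes both estimates.

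The only genuine, though minor, point is the uniform homogeneity bound on $D^\delta\rho_\circ^{\,s}$ together with the exponent bookkeeping: after attaching $\rho_\circ^{\abs{\beta}}$ the surviving power of $\rho_\circ$ always equals the order of the surviving derivative of $f$, so that no derivative of $f$ of order exceeding $n$ and no leftover power of $\rho_\circ$ ever appears, and all constants are harmless because the sums are finite. Finally, since none of these manipulations sees the target space, the same computation applies verbatim to $\ell_2$-valued $f$ and yields the stated inequalities for $K^n_{p,\theta}(\cD;\ell_2)$.
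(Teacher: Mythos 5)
Your proof is correct and follows exactly the route the paper indicates: the paper leaves this lemma to the reader but points to the bound $\sup_{\cD}\rho_\circ^{|\alpha|-1}|D^{\alpha}\rho_\circ|<\infty$ as the main ingredient, and your argument combines precisely this (in the mildly generalized form for powers $\rho_\circ^{s}$, justified by homogeneity) with the Leibniz rule and the index shift $\theta\mapsto\theta-p$ for the derivative estimate. The exponent bookkeeping in all three estimates checks out, so nothing further is needed.
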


 To formulate our conditions on the different parts of the equations, we will use the $L_p$-spaces of predictable stochastic processes on $\Omega_T:=\Omega\times(0,T]$ taking values in the weighted Sobolev spaces introduced above. 
For $p>1$, $\theta\in\R$, and $n\in\{0,1,2,\ldots\}$, we abbreviate 
$$
\bwso^{n}_{p,\theta}(\cD,T)
:=
L_p(\Omega_T, \pred,\prob\otimes dx;\wso^{n}_{p,\theta}(\cD)),
$$
$$
 \bwso^{n}_{p,\theta}(\cD,T;\ell_2)
:=
L_p(\Omega_T, \pred,\prob\otimes dx;\wso^{n}_{p,\theta}(\cD;\ell_2)),
$$
$$
\bL^{[\circ]}_{p,\theta}(\cD,T):=\bwso^0_{p,\theta}(\cD,T), \quad  \bL^{[\circ]}_{p,\theta}(\cD,T;\ell_2)
:=\bwso^0_{p,\theta}(\cD,T;\ell_2),
$$
and
\[
\mathring{\bwso}^1_{p,\theta}(\cD,T)
:=
L_p(\Omega_T, \pred,\prob\otimes dx;\mathring{\wso}^{1}_{p,\theta}(\cD)).
\]

Using these spaces we introduce the following classes of stochastic processes that are tailor-made for the analysis of Equation~\eqref{eq:SHE:Intro} on $\cD$.

\begin{defn}
   \label{defn sol}
 For $p\geq 2$  and $\theta\in \bR$
we write $u\in\mathcal{K}^1_{p,\theta,0}(\cD,T)$  if
$u\in\mathring{\bwso}^1_{p,\theta-p}(\cD,T)$
 and
there exist $f^0\in \bL^{[\circ]}_{p,\theta+p}(\cD,T), f^i\in  \bL^{[\circ]}_{p,\theta}(\cD,T)$, $i=1,2$, and  $g\in \bL^{[\circ]}_{p,\theta}(\cD,T;\ell_2)$,
such that 
\begin{equation}\label{eqn 28}
 du=(f^0+f^i_{x^i})\, dt +g^k \, dw^k_t,\quad t\in(0,T],
\end{equation}
 on $\cD$  in the sense of distributions with $u(0,\cdot)=0$, that is, for any $\varphi \in
\cont^{\infty}_{c}(\cD)$, with probability one, the equality
\begin{equation}\label{eq:distribution}
(u(t,\cdot),\varphi)=   \int^{t}_{0}
\left[(f^0(s,\cdot),\varphi) -(f^i(s,\cdot),\varphi_{x^i}) \right]ds + \sum^{\infty}_{k=1} \int^{t}_{0}
(g^k(s,\cdot),\varphi)\,  dw^k_s
\end{equation}
holds for all $t \leq T$. 
In this situation
we also write
$$
\bD u:=f^0+f^i_{x^i}\qquad\text{and}\qquad \bS u :=g
$$
for the deterministic part and the stochastic part, respectively.
\end{defn}

\begin{remark}
The spaces $\cK^1_{p,\theta,0}(\cD,T)$ from Definition~\ref{defn sol} coincide with the spaces $\mathfrak{K}^1_{p,\theta}(\cD,T)$ introduced in~\cite[Definition~3.4]{CioKimLee+2018}.
The only (apparent) difference is that in the definition of $\mathfrak{K}^1_{p,\theta}(\cD,T)$ the deterministic part $\bD u$ is required to be an element of $\mathbb K^{-1}_{p,\theta+p}(\cD,T):=L_p(\Omega_T;K^{-1}_{p,\theta+p}(\cD))$, where $K^{-1}_{p,\theta+p}(\cD)$ is the dual of $\mathring{K}^1_{p',\theta'-p'}(\cD)$ with $1/p+1/p'=1$ and $\theta/p+\theta'/p'=2$.
However, this is not really a difference, since 
$$
\mathbb K^{-1}_{p,\theta+p}(\cD,T)=\ggklam{f^0+f^i_{x^i} : f^0\in \bL^{[\circ]}_{p,\theta+p}(\cD,T), f^i \in \bL^{[\circ]}_{p,\theta}(\cD,T)}.
$$
This can be proven with a similar strategy as the analogous result for classical Sobolev spaces, see, e.g., \cite[page~62ff.]{AdaFou2003}, and by using the fact that for reflexive Banach spaces $B$ with dual $B^*$, the dual of $L_{p'}(\Omega_T;B)$ is isometrically isomorphic to $L_p(\Omega_T;B^*)$, see, e.g., \cite[Theorem~IV.1.1 and Corollary~III.2.13]{DieUhl1977}.

\end{remark}

 In this article, Equation~\eqref{eq:SHE:Intro} has the following meaning on $\cD$.

\begin{defn}\label{def:solution:D}
We say that  $u$ is a solution to Equation~\eqref{eq:SHE:Intro} on $\cD$ in the class $\mathcal{K}^{1}_{p,\theta,0}(\cD,T)$ 
if
$u\in \mathcal{K}^{1}_{p,\theta,0}(\cD,T)$ with
\[
\bD u = \Delta u + f^0+f^i_{x^i}=f^0+(f^i+u_{x^i})_{x^i}
\qquad
\text{and}
\qquad
\bS u = g.
\]
\end{defn}

Now that we have specified the setting, we are ready to present our results. We start with the key estimate in this article. Its proof is given in Section~\ref{sec:proof:lift}.
Recall that $\rho(x):=\rho_{\cD}(x):=\mathrm{dist}(x,\partial\cD)$ denotes the distance of a point $x\in\cD$ to the boundary $\partial \cD$.

\begin{thm}\label{lem:estim:2DCone:Theta}
Let $p\ge 2$, $1<\Theta<p+1$, $\theta\in \bR$, and $m\in\{0,1,2,\ldots\}$.  Moreover, let $u\in \mathcal{K}^{1}_{p,\theta,0}(\cD,T)$ be a solution to Equation~\eqref{eq:SHE:Intro} on $\cD$. Then  
\begin{align*}
\E&\int_0^T \sum_{\abs{\alpha}\leq m+1} \int_{\cD} \Abs{\rho^{\abs{\alpha}-1}D^\alpha u}^p \rho_{\circ}^{\theta-2}\ssgrklam{\frac{\rho}{\rho_{\circ}}}^{\Theta-2}\,dx\,dt\\
&\leq
N\,
\E \int_0^T \int_{\cD} \ssgrklam{\Abs{\rho_{\circ}^{-1}
u}^p 
+ 
\sum_{\abs{\alpha}\leq (m-1)\vee 0}\Abs{\rho^{\abs{\alpha}+1}D^{\alpha}f^0}^p+
 \sum_i\sum_{\abs{\alpha}\leq m}\Abs{\rho^{\abs{\alpha}}D^{\alpha}f^i}^p
\\
&\qquad\qquad\qquad\qquad\qquad\qquad\qquad
+
\sum_{\abs{\alpha}\leq m} \Abs{\rho^{\abs{\alpha}}D^\alpha g}_{\ell_2}^p}\rho_{\circ}^{\theta-2}\ssgrklam{\frac{\rho}{\rho_{\circ}}}^{\Theta-2}\,dx\,dt,
\end{align*}
where $N=N(p,\theta,\Theta,\kappa_0,m)$.
In particular, $N$ does not depend on $T$.
\end{thm}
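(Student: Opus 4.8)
The plan is to prove the estimate by a localization-and-scaling argument that reduces it to the known weighted $L_p$-regularity theory for the stochastic heat equation near a flat (or $\cont^1$) boundary, exploiting the self-similarity of the angular domain $\cD$ under dilations. The decisive structural observation is that both $\rho_\circ(x)=|x|$ and $\rho(x)=\mathrm{dist}(x,\partial\cD)$ are positively homogeneous of degree one (since $\partial\cD$ consists of two rays through the origin), so that their ratio $\rho/\rho_\circ$ -- and hence the factor $(\rho/\rho_\circ)^{\Theta-2}$ in the weight -- is invariant under the dilations $x\mapsto\lambda x$. Only the factor $\rho_\circ^{\theta-2}$ carries nontrivial homogeneity, and this is precisely the scaling that the Kondratiev spaces $\wso^n_{p,\theta}$ are designed to track.

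First I would fix a smooth radial cut-off $\zeta$ supported in an annulus $\{1/2<|x|<2\}$ and set $\zeta_n(x):=\zeta(2^{-n}x)$, $n\in\bZ$, arranged so that $\sum_n\zeta_n\equiv1$ on $\cD\setminus\{0\}$ with uniformly finite overlap; note $|D^\beta\zeta_n|\lesssim\rho_\circ^{-|\beta|}$ on $\mathrm{supp}\,\zeta_n$. For each $n$ the localized process $u^{(n)}:=\zeta_n u$ again solves an equation of the type \eqref{eqn 28}: since $\zeta_n$ is deterministic and time-independent, $d u^{(n)}=\zeta_n\,du$, and writing $\zeta_n\Delta u=\Delta u^{(n)}-2\,\mathrm{div}(u\,\nabla\zeta_n)+u\,\Delta\zeta_n$ one finds that $u^{(n)}$ solves the heat equation on $\cD$ with forcing terms
$$
\tilde f^{0}_n=\zeta_n f^{0}+u\,\Delta\zeta_n-(\zeta_n)_{x^i}f^{i},\quad
\tilde f^{i}_n=\zeta_n f^{i}-2u\,(\zeta_n)_{x^i},\quad
\tilde g_n=\zeta_n g .
$$
The point of this rearrangement is that the commutator contributions involve $u$ itself and no derivatives of $u$, each multiplied by a derivative of $\zeta_n$ of size $\rho_\circ^{-1}$ or $\rho_\circ^{-2}$.

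Next, because $u^{(n)}$ is supported where $|x|\sim 2^n$ and thus bounded away from the vertex, the only part of $\partial\cD$ meeting its support is the two flat edges. Rescaling to unit scale by $x\mapsto 2^n x$ (and, as usual, parabolically in time), the self-similarity of $\cD$ maps every such piece to one and the same reference configuration -- a neighborhood of a flat boundary portion -- to which the established weighted $L_p$-estimates for the stochastic heat equation on $\cont^1$ domains apply. This is exactly the step that forces the admissible range $1<\Theta<p+1$: it is the sharp range in dimension $d=2$ for the distance-to-boundary weight $\rho^{\Theta-2}$ in that theory, matching $d-1<\Theta<d+p-1$ from the introduction. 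The resulting order-$(m+1)$ estimate for the rescaled piece, transported back by the dilation and using the invariance of $(\rho/\rho_\circ)^{\Theta-2}$, yields an estimate on each annulus in which $\rho_\circ^{\theta-2}$ reappears as a single scale factor $2^{n(\theta-2)}$ and all derivative counts match those in the statement.

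Finally I would sum over $n\in\bZ$. Thanks to the finite overlap of the supports the left-hand sides reassemble into the global weighted norm, and, because every rescaled piece is congruent to the same model, the constant $N$ is independent of $n$ and of $T$. It then remains to dispose of the commutator terms $u\,\Delta\zeta_n$, $(\zeta_n)_{x^i}f^i$ and $u\,(\zeta_n)_{x^i}$. Using $\rho\lesssim\rho_\circ$ on $\mathrm{supp}\,\zeta_n$, the lowest-order commutators are bounded by the displayed term $|\rho_\circ^{-1}u|^p$, while the higher-order ones produce only derivatives of $u$ of order at most $m$. This is where the main work lies: I would run an induction on $m$, so that these order-$\le m$ contributions are absorbed by the induction hypothesis (the estimate with $m-1$ in place of $m$), the base case $m=0$ closing directly since its commutators are zeroth order in $u$ and hence controlled by $|\rho_\circ^{-1}u|^p$. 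The principal obstacle, and the place where care is needed, is precisely this bookkeeping: verifying that every commutator term lands in a norm already appearing on the right-hand side, with the correct powers of $\rho$ and $\rho_\circ$, and that the absorption is carried out with a constant uniform in the scale $n$ and independent of $T$.
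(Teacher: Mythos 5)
Your proposal is correct and follows essentially the same route as the paper: localize to dyadic annuli, exploit the dilation invariance of $\cD$ and the homogeneity of $\rho$ and $\rho_\circ$ to reduce each piece to a fixed reference configuration handled by the $T$-independent weighted $L_p$-theory on bounded $\cont^1$ domains, then sum over scales, with the commutator terms absorbed by a bootstrap in $m$ (the paper runs this iteration inside the local lemma using decreasing powers of the cutoff, which is the same device as your induction on $m$). The only point deserving more care than you give it is the $T$-independence of the $\cont^1$-domain estimate, which the paper establishes separately by reworking the Gronwall step of the cited a-priori estimate.
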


\begin{remark}\label{explanation of key}
As mentioned in~\cite[Remark~2.7]{Kim2004}, the restriction $1<\Theta<p+1$ on the parameter $\Theta$ from Theorem~\ref{lem:estim:2DCone:Theta} is necessary in order to obtain  the corresponding  result for the stochastic heat equation on $\cont^1$ domains with  $\rho_0:=1$. 
Therefore, since a solution to Equation~\eqref{eq:SHE:Intro:a} on $\cD$ that vanishes near the vertex can be considered as a solution to the same equation on a suitable $\cont^1$ domain, the range of $\Theta$ in Theorem~\ref{lem:estim:2DCone:Theta} is sharp.

\end{remark}

In the proof of Lemma~\ref{lem:fi:estim:2DCone} below we first establish existence for equations with nice forcing terms and extend it to the general case with a limit argument.
For this step to succeed, in particular to make sure that we maintain equality `for all $t\leq T$' and therefore the limit also fulfills the equation in the sense of distributions, see Definition~\ref{defn sol}, we need the following lemma.
It also plays a crucial role in the proof of the existence result  for the stochastic heat equation on polygonal domains (Theorem~\ref{thm polygon main}), as it is one of the main ingredients in the proof of Lemma~\ref{lem for gronwall}, which in turn is used for a Gronwall argument to establish existence. 
Its proof is given in detail in Section~\ref{4}.

\begin{lemma}\label{lem 4.5.1}
Let $p\geq 2$ and $\theta\in \bR$. Assume that $u\in \cK^1_{p,\theta,0}(\cD,T)$  satisfies
\begin{equation}\label{disti}
du=(f^0+f^i_{x^i})\,dt+g^k \,dw^k_t, \quad t\in(0,T],
\end{equation}
in the sense of distributions.  
Then $u\in L_p(\Omega; \cont([0,T];L^{[\circ]}_{p,\theta}(\cD)))$ and
\begin{equation}\label{eq:estim:sup:2DCone}
\begin{alignedat}{1}
\E \sgeklam{\sup_{t\leq T} \|u(t,\cdot)\|^p_{L^{[\circ]}_{p,\theta}(\cD)}}
\leq
& N  
\Big( \|u\|^p_{\bwso^{1}_{p,\theta-p}(\cD,T)}+\|f^0\|^p_{\bL^{[\circ]}_{p,\theta+p}(\cD,T)}\\
&\qquad\qquad+
\sum_i\|f^i\|^p_{\bL^{[\circ]}_{p,\theta}(\cD,T)}
+
\|g\|^p_{\bL^{[\circ]}_{p,\theta}(\cD,T;\ell_2)}\Big),
\end{alignedat}
\end{equation}
where $N=N(p,\theta,\kappa_0,T)$.
\end{lemma}

We have now all ingredients needed to state and prove our main existence and uniqueness result for Equation~\eqref{eq:SHE:Intro} on $\cD$.
The representation formula therein uses the Green function for the heat equation on $\cD=\cD_{\kappa_0}$ with zero Dirichlet boundary condition, which we denote by $\Gamma$, see, e.g., \cite[Section~1]{KozRos2012b} for a precise definition.

\begin{thm}[Existence and uniqueness/angular domains]\label{thm:ex:uni:2DCone}
Let $p\geq 2$ and let $\theta\in\bR$ fulfill
\begin{equation}\label{eq:range:vertex}
p \left(1-\frac{\pi}{\kappa_0}\right)<\theta<p\left(1+\frac{\pi}{\kappa_0}\right).
\end{equation}
Assume that $g\in \bL^{[\circ]}_{p,\theta}(\cD,T;\ell_2)$, $f^0\in \bL^{[\circ]}_{p,\theta+p}(\cD,T)$ and $f^i\in \bL^{[\circ]}_{p,\theta}(\cD,T)$, $i=1,2$.
Then
\begin{align*}
u(t,x)
&:=\int_0^t \int_\cD \Gamma(t-s,x,y) f^0(s,y)\,dy\,ds-\sum_{i}\int_0^t \int_\cD \Gamma_{y^i}(t-s,x,y) f^i(s,y)\,dy\,ds
\\
&\qquad+
\sum_{k=1}^\infty
\int_0^t \int_\cD \Gamma(t-s,x,y) g^k(s,y)\,dy\,dw^k_s
\end{align*}
is the unique solution to Equation~\eqref{eq:SHE:Intro} on $\cD$ in the class $\cK^{1}_{p,\theta,0}(\cD,T)$.
Moreover, 
\begin{align}
   \label{main 1}
\|u\|_{\mathbb{K}^1_{p,\theta-p}(\cD,T)}
\leq
N \Big( \|f^0\|_{\bL^{[\circ]}_{p,\theta+p}(\cD,T)}+\sum_i \|f^i\|_{\bL^{[\circ]}_{p,\theta}(\cD,T)}+
\|g\|_{\bL^{[\circ]}_{p,\theta}(\cD,T;\ell_2)}\Big),
\end{align}
where $N=N(p,\theta,\kappa_0)$.
 In particular, $N$ does not depend on $T$.
\end{thm}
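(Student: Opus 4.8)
The plan is to organise everything around the a priori bound \eqref{main 1}, from which both uniqueness and the passage to the limit in the existence proof follow, and then to identify the solution with the Green function formula. The two analytic engines are the lifting estimate of Theorem~\ref{lem:estim:2DCone:Theta} (which upgrades a zeroth order bound to the full first order regularity) and a sharp zeroth order solvability estimate in the vertex weight, valid exactly in the range \eqref{eq:range:vertex}.

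First I would derive \eqref{main 1}. Apply Theorem~\ref{lem:estim:2DCone:Theta} with $m=0$ and $\Theta=2$; the admissibility $1<\Theta<p+1$ holds since $p\geq 2$, and the mixed weight $(\rho/\rho_\circ)^{\Theta-2}$ collapses to $1$. Using $\rho\le\rho_\circ$ on $\cD$, the left-hand side dominates $\|u\|_{\mathbb{K}^1_{p,\theta-p}(\cD,T)}^p$, while the $f^0,f^i,g$ terms on the right are bounded by $\|f^0\|_{\bL^{[\circ]}_{p,\theta+p}(\cD,T)}^p$, $\sum_i\|f^i\|_{\bL^{[\circ]}_{p,\theta}(\cD,T)}^p$ and $\|g\|_{\bL^{[\circ]}_{p,\theta}(\cD,T;\ell_2)}^p$. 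What survives on the right is the zeroth order term $\E\int_0^T\int_\cD|\rho_\circ^{-1}u|^p\rho_\circ^{\theta-2}\,dx\,dt=\|u\|_{\bL^{[\circ]}_{p,\theta-p}(\cD,T)}^p$, so everything reduces to the sharp estimate
\[
\|u\|_{\bL^{[\circ]}_{p,\theta-p}(\cD,T)}\le N\big(\|f^0\|_{\bL^{[\circ]}_{p,\theta+p}(\cD,T)}+\textstyle\sum_i\|f^i\|_{\bL^{[\circ]}_{p,\theta}(\cD,T)}+\|g\|_{\bL^{[\circ]}_{p,\theta}(\cD,T;\ell_2)}\big),
\]
which is exactly the $L_p$-theory on angular domains from \cite{CioKimLee+2018}, whose solution class coincides with $\cK^1_{p,\theta,0}(\cD,T)$. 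Granting it, \eqref{main 1} follows, and uniqueness is immediate: the difference of two solutions with identical data solves \eqref{eq:SHE:Intro} with zero forcing, whence \eqref{main 1} forces it to vanish.

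For existence I would argue by approximation. For forcing terms that are smooth and supported away from the vertex---the setting of Lemma~\ref{lem:fi:estim:2DCone}---the Green function $\Gamma$ of \cite{KozRos2012b} produces, through the stated representation formula, a genuine solution: one checks the distributional identity \eqref{eq:distribution} by testing against $\varphi\in\cont^\infty_c(\cD)$, shifting the derivatives onto $\Gamma$, and invoking the (stochastic) Fubini theorem for the $dw^k$-term, the zero boundary and initial conditions being encoded in $\Gamma$. Since \eqref{main 1} shows that the data-to-solution map is bounded from the forcing spaces into $\cK^1_{p,\theta,0}(\cD,T)$, density lets me pass to general $f^0,f^i,g$. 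Here Lemma~\ref{lem 4.5.1} is essential: it gives convergence of the approximants in $L_p(\Omega;\cont([0,T];L^{[\circ]}_{p,\theta-p}(\cD)))$, which preserves the equality in \eqref{eq:distribution} for \emph{all} $t\le T$ in the limit, so the limit remains in the class and solves the equation in the sense of Definition~\ref{defn sol}. As the representation formula is linear and continuous in the data in these same topologies, the limit is precisely the stated $u$.

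The genuine obstacle is the sharp zeroth order estimate above, together with the sharpness of the threshold \eqref{eq:range:vertex}. That threshold is dictated by the Mellin/Kondratiev spectral analysis of the operator pencil attached to the Laplacian on the angular cross-section $(0,\kappa_0)$, whose eigenvalues $k\pi/\kappa_0$ fix the admissible powers of $\rho_\circ$; turning this into weighted $L_p$-boundedness of the Green operator requires delicate pointwise heat-kernel bounds near the vertex. A secondary point is $T$-independence of $N$: Lemma~\ref{lem 4.5.1} only yields a $T$-dependent supremum bound, whereas \eqref{main 1} must be uniform in $T$. This is inherited from the scale invariance of the cone together with the parabolic homogeneity of the weight $\rho_\circ^{\theta-2}$ and of the heat kernel under $x\mapsto cx$, $t\mapsto c^2t$, which leaves the estimate form-invariant and therefore rules out any essential dependence of the constant on $T$.
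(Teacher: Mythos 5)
Your overall architecture matches the paper's: reduce the first--order estimate to a zeroth--order one via Theorem~\ref{lem:estim:2DCone:Theta} with $m=0$, $\Theta=2$ (your computation that the mixed weight collapses and that $\rho\le\rho_\circ$ makes the left-hand side dominate $\|u\|_{\mathbb{K}^1_{p,\theta-p}(\cD,T)}$ is correct), establish the representation formula for nice data, and pass to the limit using Lemma~\ref{lem 4.5.1} to preserve the distributional identity for all $t\le T$. Your scaling argument for the $T$-independence of $N$ is also sound, since the parabolic dilation $x\mapsto cx$, $t\mapsto c^2t$ acts homogeneously and compatibly on both sides of \eqref{main 1}.

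The genuine gap is the zeroth--order estimate for the divergence--form forcing. You write that the bound
\[
\|u\|_{\bL^{[\circ]}_{p,\theta-p}(\cD,T)}\le N\big(\|f^0\|_{\bL^{[\circ]}_{p,\theta+p}(\cD,T)}+\textstyle\sum_i\|f^i\|_{\bL^{[\circ]}_{p,\theta}(\cD,T)}+\|g\|_{\bL^{[\circ]}_{p,\theta}(\cD,T;\ell_2)}\big)
\]
``is exactly the $L_p$-theory on angular domains from \cite{CioKimLee+2018}''. It is not: Theorem~3.7 of that paper treats only $f^0$ and $g$; the term $f^i_{x^i}$ is precisely what is new here, and the paper states explicitly that the extension to $f^i\neq 0$ is the missing link supplied by Lemma~\ref{lem:fi:estim:2DCone}. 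You do later acknowledge that the weighted $L_p$-boundedness of the relevant Green operator near the vertex is ``the genuine obstacle'', but you only gesture at Mellin analysis and heat--kernel bounds without carrying them out. The paper closes this gap concretely: by \cite[Theorem~3.10]{KozNaz2014} the gradient of the Green function satisfies
$|\Gamma_y(t-s,x,y)|\leq N \big(\tfrac{|x|}{|x|+\sqrt{t-s}}\big)^{\lambda}\big(\tfrac{|y|}{|y|+\sqrt{t-s}}\big)^{\lambda-1}(t-s)^{-3/2}e^{-\sigma|x-y|^2/(t-s)}$ for any $0<\lambda<\pi/\kappa_0$; choosing $\lambda$ with $1-\lambda<\theta/p<1+\lambda$ (possible exactly because of \eqref{eq:range:vertex}) and applying the Schur-type criterion \cite[Proposition~A.5]{KozNaz2014} to the kernel $\one_{t>s}|x|^{-1}\frac{|x|^{(\theta-2)/p}}{|y|^{(\theta-2)/p}}\Gamma_y(t-s,x,y)$ yields $\|\rho_\circ^{-1}v\|_{\bL^{[\circ]}_{p,\theta}(\cD,T)}\le N\sum_i\|f^i\|_{\bL^{[\circ]}_{p,\theta}(\cD,T)}$ with $N$ independent of $T$. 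Without this step (or an equivalent substitute), your argument establishes neither \eqref{main 1} nor, consequently, the limit passage for general $f^i$; the rest of your proof is contingent on it. A secondary, minor point: uniqueness should be reduced to the homogeneous equation with $f^i=0$, where it is already contained in \cite[Theorem~3.7]{CioKimLee+2018}, rather than derived from the a priori estimate whose proof itself presupposes identification of the solution with the Green--function representation.
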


If $f^i=0$ for $i=1,2$, then this result has been already proven in \cite[Theorem~3.7]{CioKimLee+2018}.  
The extension presented here is essential to treat Equation~\eqref{eq:SHE:Intro} on polygons in Section~\ref{sec:Polygons} even if the equation on the polygon does not contain terms of this type, see also Remark~\ref{remark 8.24} for details. 

The missing link between \cite[Theorem~3.7]{CioKimLee+2018} and Theorem~\ref{thm:ex:uni:2DCone} is presented in the following lemma.
Its proof relies on Theorem~\ref{lem:estim:2DCone:Theta} and Lemma~\ref{lem 4.5.1}.

\begin{lemma}\label{lem:fi:estim:2DCone}
Let $p\geq 2$ and let $\theta\in \bR$ fulfill~\eqref{eq:range:vertex}.  Assume $f^i\in \bL^{[\circ]}_{p,\theta}(\cD,T)$ for $i=1,2$ and  define
\begin{equation*}
    \label{eqn 4.13.1}
v(t,x):=-\sum_i \int_0^t \int_\cD \Gamma_{y^i}(t-s,x,y) f^i(s,y) \,dy\,ds.
\end{equation*}
Then $v$ is the unique solution in the class $\cK^{1}_{p,\theta,0}(\cD,T)$ to  the equation
\begin{equation}\label{eqn 4.2.1}
dv=(\Delta v + f^i_{x^i})\,dt, \quad t\in (0,T],
\end{equation}
on $\cD$.
Moreover, there exists a constant $N=N(p,\theta,\kappa_0)$ such that 
\begin{equation}
    \label{eqn 4.2.3}
\|v\|_{\mathbb{K}^1_{p,\theta-p}(\cD,T)}
\leq N \sum_i 
\nnrm{f^i}{\bL^{[\circ]}_{p,\theta}(\cD,T)}.
\end{equation}
 In particular, $N$ does not depend on $T$.
\end{lemma}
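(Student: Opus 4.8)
The plan is to first prove the assertion for a dense class of smooth forcing terms and then to pass to the limit. I fix $f^i$ that are smooth with compact support in $\cD$, from a subclass dense in $\bL^{[\circ]}_{p,\theta}(\cD,T)$; then $f^i_{x^i}\in\bL^{[\circ]}_{p,\theta+p}(\cD,T)$. Applying \cite[Theorem~3.7]{CioKimLee+2018} with the $f^0$ there equal to $f^i_{x^i}$ and with vanishing $f^i$ and $g$ yields a unique element of $\cK^1_{p,\theta,0}(\cD,T)$ represented by $\int_0^t\int_\cD\Gamma(t-s,x,y)f^i_{x^i}(s,y)\,dy\,ds$. Since the $f^i$ have compact support in $\cD$, integrating by parts in $y$ moves the derivative onto $\Gamma$ without boundary contributions and identifies this process with $v$. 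Thus, for smooth data, $v\in\cK^1_{p,\theta,0}(\cD,T)$ solves \eqref{eqn 4.2.1}, with $\bD v=\Delta v+f^i_{x^i}$ and $\bS v=0$.

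To prove \eqref{eqn 4.2.3} for such $v$, I apply Theorem~\ref{lem:estim:2DCone:Theta} with $m=0$ and $\Theta=2$ (admissible since $1<2<p+1$) and with $f^0=0$, $g=0$. Because $\rho\le\rho_\circ$, the summand $|\rho^{-1}v|^p$ on the left dominates $|\rho_\circ^{-1}v|^p$, so the left-hand side controls $\|v\|^p_{\bwso^1_{p,\theta-p}(\cD,T)}$, while on the right the $f^i$-terms reduce exactly to $\sum_i\|f^i\|^p_{\bL^{[\circ]}_{p,\theta}(\cD,T)}$. This gives
\[
\|v\|^p_{\bwso^1_{p,\theta-p}(\cD,T)}\le N\Big(\|v\|^p_{\bL^{[\circ]}_{p,\theta-p}(\cD,T)}+\sum_i\|f^i\|^p_{\bL^{[\circ]}_{p,\theta}(\cD,T)}\Big),
\]
so the whole difficulty is reduced to bounding the zeroth order norm $\|v\|_{\bL^{[\circ]}_{p,\theta-p}(\cD,T)}$ by the data; this term cannot be absorbed into the left-hand side.

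Estimating the zeroth order norm is the main obstacle. Since $\bS v=0$, the process $v$ has no martingale part, and for almost every $\omega$ the map $t\mapsto v(t,\cdot)$ is the pathwise solution of the deterministic equation $v_t=\Delta v+f^i_{x^i}$ with $v(0)=0$; it therefore suffices to prove the corresponding deterministic weighted bound for a.e.\ fixed $\omega$ and to integrate in $\prob$. I would carry this out by duality: $f^i_{x^i}$ defines an element of the dual of $\mathring{\wso}^1_{p',\theta'-p'}(\cD)$, with $1/p+1/p'=1$ and $\theta/p+\theta'/p'=2$, as in the remark following Definition~\ref{defn sol}. For a spatial test function $\phi$ I solve the backward adjoint heat equation $-h_t=\Delta h+\phi$ with $h(T)=0$; its forcing is of $f^0$-type, so the deterministic counterpart of \cite[Theorem~3.7]{CioKimLee+2018} in the dual parameters $(p',\theta')$ applies and yields a bound on $\|h\|_{\bwso^1_{p',\theta'-p'}(\cD,T)}$ by $\|\phi\|$. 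Here \eqref{eq:range:vertex} enters decisively, and it is available in the dual parameters because $|\theta/p-1|<\pi/\kappa_0$ is invariant under $\theta/p\mapsto 2-\theta/p$. Pairing $v$ with $\phi$ in space and time, using the self-adjointness of the Dirichlet Laplacian and integrating by parts to transfer the derivative from $f^i$ onto $h$, I obtain a pairing bounded by $N\sum_i\|f^i\|_{\bL^{[\circ]}_{p,\theta}(\cD,T)}\,\|\phi\|$, and taking the supremum over $\phi$ gives the required zeroth order bound and hence \eqref{eqn 4.2.3} for smooth data.

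For general $f^i\in\bL^{[\circ]}_{p,\theta}(\cD,T)$ I approximate by smooth $f^i_{(n)}$ in $\bL^{[\circ]}_{p,\theta}(\cD,T)$. By linearity and \eqref{eqn 4.2.3} the corresponding solutions $v_{(n)}$ form a Cauchy sequence in $\bwso^1_{p,\theta-p}(\cD,T)$; call the limit $v$. Applying Lemma~\ref{lem 4.5.1} to the differences $v_{(n)}-v_{(m)}$ shows that $(v_{(n)})$ is also Cauchy in $L_p(\Omega;\cont([0,T];L^{[\circ]}_{p,\theta}(\cD)))$, so the distributional identity \eqref{eq:distribution} persists in the limit for all $t\le T$. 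Hence $v\in\cK^1_{p,\theta,0}(\cD,T)$ solves \eqref{eqn 4.2.1}, satisfies \eqref{eqn 4.2.3}, and equals the stated Green function representation (the representation map is bounded from $\bL^{[\circ]}_{p,\theta}(\cD,T)$ into $\bL^{[\circ]}_{p,\theta-p}(\cD,T)$ by the estimate just proved, so it commutes with the limit). Uniqueness follows at once: the difference of two solutions in $\cK^1_{p,\theta,0}(\cD,T)$ solves the homogeneous equation, and \eqref{eqn 4.2.3} with $f^i=0$ forces it to vanish.
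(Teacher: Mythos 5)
Your architecture coincides with the paper's except at one point: the zeroth-order bound $\|v\|_{\bL^{[\circ]}_{p,\theta-p}(\cD,T)}\le N\sum_i\|f^i\|_{\bL^{[\circ]}_{p,\theta}(\cD,T)}$, which you correctly identify as the whole difficulty. The paper proves this bound directly, for \emph{general} $f^i\in\bL^{[\circ]}_{p,\theta}(\cD,T)$, from the pointwise estimate on $\Gamma_y$ in \cite[Theorem~3.10]{KozNaz2014} combined with the Schur-type test \cite[Proposition~A.5]{KozNaz2014}; everything else in your proposal (reduction to smooth data via \cite[Theorem~3.7]{CioKimLee+2018}, lifting with Theorem~\ref{lem:estim:2DCone:Theta} at $m=0$, $\Theta=2$, approximation and passage to the limit using Lemma~\ref{lem 4.5.1}) matches the paper's Steps 2 and 3. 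Your duality replacement is structurally sound: the weight bookkeeping does work out ($\phi$ lands in $\bL^{[\circ]}_{p',\theta'+p'}(\cD,T)$, which is exactly the $f^0$-space for the dual parameters, and \eqref{eq:range:vertex} is indeed invariant under $\theta/p\mapsto 2-\theta/p$). But it has two soft spots.

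First, the estimate you need for the backward problem, $\|h\|_{\mathbb{K}^1_{p',\theta'-p'}(\cD,T)}\le N\|\phi\|_{\bL^{[\circ]}_{p',\theta'+p'}(\cD,T)}$, is required for $p'\in(1,2]$, whereas \cite[Theorem~3.7]{CioKimLee+2018} is a $p\ge 2$ statement; you must either cite the deterministic $L_{p'}$-theory on cones for small exponents or prove it, and the natural proof goes through the very Green-function kernel estimates the paper invokes, so the duality detour does not actually avoid them. Second, your closing claim that the limit ``equals the stated Green function representation'' because ``the representation map is bounded \dots by the estimate just proved'' conflates an a priori estimate on a dense class of data with boundedness of the integral operator itself: to know that $\int_0^t\int_\cD\Gamma_{y^i}(t-s,x,y)f^i(s,y)\,dy\,ds$ converges absolutely and depends continuously on $f^i\in\bL^{[\circ]}_{p,\theta}(\cD,T)$, you need precisely the kernel bound of the paper's Step~1; the duality argument gives no information about the integral for rough $f^i$, so the representation formula asserted in the lemma is not yet justified in your write-up. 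Both gaps are filled at once by \cite[Theorem~3.10]{KozNaz2014} together with \cite[Proposition~A.5]{KozNaz2014}, which is why the paper leads with that estimate.
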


\begin{proof}
\emph{Step 1.} Let $f^i\in \bL^{[\circ]}_{p,\theta}(\cD,T)$, $i=1,2$. By \cite[Theorem~3.10]{KozNaz2014},  for any  $0<\lambda<\frac{\pi}{\kappa_0}$
($=:\lambda^{\pm}_c$ in \cite{KozNaz2014}),
$$
|\Gamma_y(t-s,x,y)|\leq N \ssgrklam{\frac{\abs{x}}{\abs{x}+\sqrt{t-s}}}^{\lambda} \ssgrklam{\frac{\abs{y}}{\abs{y}+\sqrt{t-s}}}^{\lambda-1} (t-s)^{-\frac{3}{2}} e^{-\frac{\sigma |x-y|^2}{t-s}},
$$
where the constants $\sigma,N>0$ depend only on $\kappa_0$ and $\lambda$.  
Since $\theta$ satisfies~\eqref{eq:range:vertex}, we can take $\lambda$ sufficiently large such that $1-\lambda<\theta/p<1+\lambda$. Then the kernel
$$
\cT_1(t,s,x,y):=\one_{x\in \cD} \one_{y\in \cD} \one_{t>s} |x|^{-1} \frac{|x|^{(\theta-2)/p}}{|y|^{(\theta-2)/p}}\Gamma_y(t-s,x,y)
$$
satisfies the algebraic conditions in \cite[Proposition~A.5]{KozNaz2014} with $\mu=(\theta-2)/p$, $\lambda_1=\lambda_2=\lambda-1$ and $r=1$. Hence by this proposition, 
\[
\|v\|_{\bL^{[\circ]}_{p,\theta-p}(\cD,T)}=\|\rho_{\circ}^{-1}v\|_{\bL^{[\circ]}_{p,\theta}(\cD,T)}\leq N(p,\theta,\kappa_0)\sum_i \|f^i\|_{\bL^{[\circ]}_{p,\theta}(\cD,T)}.
\]

\noindent\emph{Step 2.} Assume the $f^i$s are sufficiently nice, say,  $f^i\in L_p(\Omega_T, \pred;  \cont^2_c(\cD))$. Then by  \cite[Theorem~3.7]{CioKimLee+2018},
$$
v:=\sum_i \int_0^t \int_\cD \Gamma (t-s,x,y) f^i_{x^i}(s,y)\,dy\,ds=-\sum_i \int_0^t \int_\cD \Gamma_{y^i}(t-s,x,y) f^i(s,y)\,dy\,ds
$$
is the unique solution to Equation~\eqref{eqn 4.2.1} in the class $\cK^{1}_{p,\theta,0}(\cD,T)$, see also~\cite{Naz2001, Sol2001}.
This, together with Step~1 and  Theorem~\ref{lem:estim:2DCone:Theta} with $m=0$ and $\Theta=2$ lead to \eqref{eqn 4.2.3} for  $f^i\in L_p(\Omega_T, \pred;  \cont^2_c(\cD))$. 

\noindent\emph{Step 3.} General $f^i\in \bL^{[\circ]}_{p,\theta}(\cD,T)$, $i=1,2$. Uniqueness follows from the case $f^i=0$. Take a sequence $(f^i_n)_{n\in\bN}\subset L_p(\Omega_T, \pred; \cont^2_c(\cD))$ such that
 $f^i_n \to f^i$ in $\bL^{[\circ]}_{p,\theta}(\cD,T)$ for each $i$. Let $v_n \in \cK^{1}_{p,\theta,0}(\cD,T)$ be the solution to Equation~\eqref{eqn 4.2.1}
 with $f^i_n$. Then  by Step~1 and Step~2, $(v_n)$ is a Cauchy sequence in $\mathring{\bwso}^1_{p,\theta-p}(\cD,T)$. 
Let $u:=\lim_{n\to \infty} v_n$ in $\mathring{\bwso}^1_{p,\theta-p}(\cD,T)$. Fix $\varphi \in \cont^{\infty}_c(\cD)$. 
Then taking the limit in
 $$
 (v_n(t,\cdot), \varphi)=-\sum_i \int^t_0((v_n(s,\cdot))_{x^i}+f^i_n(s,\cdot), \varphi_{x^i})ds, \quad \forall\; t\leq T, \quad (\prob\textup{-a.s.})
 $$
and using the continuity of $t\mapsto (u(t),\varphi)$ (due to Estimate~\eqref{eq:estim:sup:2DCone} from Lemma~\ref{lem 4.5.1}), we find that $du=(\Delta u+f^i_{x^i})\, dt$ in the sense of distributions. The integral representation formula for $u$ is due to the fact that by Step~1 we also know that $\lim_{n\to\infty}v_n=v$ in $L_{p,\theta-p}^{[\circ]}(\cD,T)$. Estimate~\eqref{eqn 4.2.3}  follows by taking the limits in the estimates for $v_n$ proven in Step~2.  
\end{proof}

\begin{remark}
Since Lemma~\ref{lem:fi:estim:2DCone} addresses the deterministic heat equation, the restriction $p\geq 2$ is obsolete. The result as well as the proof carry over to the case $p>1$ mutatis mutandis.
\end{remark}

\begin{proof}[Proof of Theorem~\ref{thm:ex:uni:2DCone}]
This is now an immediate consequence of \cite[Theorem~3.7]{CioKimLee+2018} and Lemma~\ref{lem:fi:estim:2DCone} above.
\end{proof}

Theorem \ref{lem:estim:2DCone:Theta} with $\Theta=2$ and Estimate~\eqref{lem:estim:2DCone:Theta}  now lead to the following higher order regularity result of the solution depending on the regularity of the forcing terms $f^{0}$, $f^i$, and $g^k$. 
 Recall that in this section $\rho$ denotes the distance to the boundary of $\cD$.

\begin{corollary}[higher order regularity/angular domains]
\label{high}
 Given the setting of Theorem~\ref{thm:ex:uni:2DCone}, let $u$ be the unique solution in the class $\cK^{1}_{p,\theta,0}(\cD,T)$ to Equation~\eqref{eq:SHE:Intro} on $\cD$. Assume that  
\begin{align*}
C(m,\theta, f^i,f^0,g)
&:=
\E \int^T_0 \int_{\cD}\ssgrklam{\sum_{|\alpha|\leq  (m-1)\vee 0} |\rho^{\abs{\alpha}+1}D^{\alpha}f^0|^p+
 \sum_i\sum_{|\alpha|\leq m} |\rho^{|\alpha|}D^{\alpha}f^i|^p\\
&\quad\quad\qquad\qquad\qquad+|\,\rho_{\circ} f^0|^p+
\sum_{\abs{\alpha}\leq m} |\rho^{\abs{\alpha}}D^\alpha g|_{\ell_2}^p }\rho_{\circ}^{\theta-2}\, dx\,dt <\infty
\end{align*}
for some $m\in\{0,1,2,\ldots\}$. Then
$$
\E\int_0^T \sum_{\abs{\alpha}\leq m+1} \int_{\cD}   \Abs{\rho^{\abs{\alpha}-1}D^\alpha u}^p \abs{x}^{\theta-2}\,dx\,dt\leq N\,C(m,\theta, f^i,f^0,g)<\infty,$$
where $N=N(p,\theta,\kappa_0,m)$.  In particular, $N$ does not depend on $T$.

\end{corollary}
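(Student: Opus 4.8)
The plan is to obtain the corollary directly from two results already at our disposal: the key estimate of Theorem~\ref{lem:estim:2DCone:Theta}, specialized to $\Theta=2$, and the a priori bound~\eqref{main 1} of Theorem~\ref{thm:ex:uni:2DCone}. First I would put $\Theta=2$ in Theorem~\ref{lem:estim:2DCone:Theta}. Then the mixed-weight factor $(\rho/\rho_{\circ})^{\Theta-2}$ collapses to $1$ and $\rho_{\circ}^{\theta-2}=\abs{x}^{\theta-2}$, so the left-hand side of the resulting inequality is precisely the quantity $\E\int_0^T\sum_{\abs{\alpha}\leq m+1}\int_{\cD}\abs{\rho^{\abs{\alpha}-1}D^\alpha u}^p\abs{x}^{\theta-2}\,dx\,dt$ that the corollary asks us to control.

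Next I would read off the right-hand side of Theorem~\ref{lem:estim:2DCone:Theta} with $\Theta=2$ and compare it term by term with the definition of $C(m,\theta,f^i,f^0,g)$. The three forcing-term blocks there---the $\rho^{\abs{\alpha}+1}$-weighted derivatives of $f^0$ up to order $(m-1)\vee0$, the $\rho^{\abs{\alpha}}$-weighted derivatives of $f^i$ up to order $m$, and the $\rho^{\abs{\alpha}}$-weighted derivatives of $g$ up to order $m$, all carrying the weight $\rho_{\circ}^{\theta-2}$---occur verbatim as summands of $C$. Hence these contributions are immediately dominated by $N\,C(m,\theta,f^i,f^0,g)$, with no additional estimation.

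The one term on that right-hand side which is not visibly part of $C$ is the solution term $\E\int_0^T\int_{\cD}\abs{\rho_{\circ}^{-1}u}^p\rho_{\circ}^{\theta-2}\,dx\,dt$, and absorbing it is the only genuine step. The idea is to rewrite it, using $\rho_{\circ}^{\theta-2-p}=\rho_{\circ}^{(\theta-p)-2}$, as $\|u\|^p_{\bL^{[\circ]}_{p,\theta-p}(\cD,T)}$, to bound this by the full norm $\|u\|^p_{\mathbb{K}^1_{p,\theta-p}(\cD,T)}$ (the zeroth-order part of which is exactly the $\bL^{[\circ]}_{p,\theta-p}$-norm), and then to invoke~\eqref{main 1}---available because $\theta$ satisfies the range~\eqref{eq:range:vertex}---to majorize it by $N\big(\|f^0\|^p_{\bL^{[\circ]}_{p,\theta+p}(\cD,T)}+\sum_i\|f^i\|^p_{\bL^{[\circ]}_{p,\theta}(\cD,T)}+\|g\|^p_{\bL^{[\circ]}_{p,\theta}(\cD,T;\ell_2)}\big)$.

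It then remains to match these three norms back into $C$, and this bit of bookkeeping is precisely what accounts for the otherwise puzzling extra summand $\abs{\rho_{\circ}f^0}^p$ in the definition of $C$: since $\|f^0\|^p_{\bL^{[\circ]}_{p,\theta+p}(\cD,T)}=\E\int_0^T\int_{\cD}\abs{\rho_{\circ}f^0}^p\rho_{\circ}^{\theta-2}\,dx\,dt$, it is exactly that summand, while $\|f^i\|^p_{\bL^{[\circ]}_{p,\theta}(\cD,T)}$ and $\|g\|^p_{\bL^{[\circ]}_{p,\theta}(\cD,T;\ell_2)}$ are the $\abs{\alpha}=0$ contributions already present in the $f^i$- and $g$-blocks of $C$. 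Adding the two bounds yields the asserted inequality. The resulting constant $N=N(p,\theta,\kappa_0,m)$ is independent of $T$, since both Theorem~\ref{lem:estim:2DCone:Theta} and~\eqref{main 1} provide $T$-independent constants, and finiteness of the right-hand side is just the hypothesis $C(m,\theta,f^i,f^0,g)<\infty$. I do not anticipate any real obstacle: all the analytic content sits in Theorem~\ref{lem:estim:2DCone:Theta} and~\eqref{main 1}, and the task reduces to recognizing that the residual $\abs{\rho_{\circ}^{-1}u}$ term is controlled by the a priori bound rather than by $C$ directly.
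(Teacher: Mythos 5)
Your proposal is correct and is exactly the argument the paper intends: the text immediately preceding the corollary states that it follows from Theorem~\ref{lem:estim:2DCone:Theta} with $\Theta=2$ combined with Estimate~\eqref{main 1}, which is precisely your combination, including the identification of the residual $\abs{\rho_{\circ}^{-1}u}$ term with $\nnrm{u}{\bL^{[\circ]}_{p,\theta-p}(\cD,T)}^p$ and its absorption via the a priori bound (whence the $\abs{\rho_{\circ}f^0}^p$ summand in $C$). No discrepancies.
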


We will need the following `general uniqueness' lemma  to handle the stochastic heat equation on polygons in Section~\ref{sec:Polygons}.

\begin{lemma}
   \label{lem for uniqueness}  
Let $2\leq p_1\leq p_2$ and let $\theta_1, \theta_2\in\bR$ satisfy \eqref{eq:range:vertex} for  $p=p_1$ and $p=p_2$, respectively. Assume for both $j=1$ and $j=2$,
\[
f^0 \in \bL^{[\circ]}_{p_j,\theta_j+p_j}(\cD,T), \quad f^i \in \bL^{[\circ]}_{p_j,\theta_j}(\cD,T),\,\,i=1,2, \quad g\in \bL^{[\circ]}_{p_j,\theta_j}(\cD,T;\ell_2)
\]
 and let $u\in \cK^{1}_{p_1,\theta_1,0}(\cD,T)$ be the solution to  Equation~\eqref{eq:SHE:Intro}.   Then  $u\in \cK^{1}_{p_2,\theta_2,0}(\cD,T)$.     \end{lemma}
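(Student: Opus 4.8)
The plan is to exploit the fact that the Green-function representation formula provided by Theorem~\ref{thm:ex:uni:2DCone} depends only on the data $f^0,f^i,g$ and on the heat kernel $\Gamma$ on $\cD$, and not at all on the parameters $(p,\theta)$ used to measure integrability. Write $U$ for the random field defined by that representation formula with the given forcing terms $f^0,f^i,g$. The entire argument then reduces to identifying $u$ with $U$ on the one hand, and to placing $U$ in the better class $\cK^1_{p_2,\theta_2,0}(\cD,T)$ on the other; once both are established there is nothing left to prove.

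First I would apply Theorem~\ref{thm:ex:uni:2DCone} with $p=p_1$ and $\theta=\theta_1$. By the hypothesis for $j=1$ the data lie in the spaces required there, and $(p_1,\theta_1)$ satisfies \eqref{eq:range:vertex}; hence $U$ is the \emph{unique} solution to Equation~\eqref{eq:SHE:Intro} on $\cD$ in the class $\cK^1_{p_1,\theta_1,0}(\cD,T)$. Since $u$ is, by assumption, a solution in this very class, uniqueness forces $u=U$. Next I would apply the same theorem with $p=p_2$ and $\theta=\theta_2$: by the hypothesis for $j=2$ the data also lie in the spaces indexed by $(p_2,\theta_2)$, and $(p_2,\theta_2)$ again satisfies \eqref{eq:range:vertex}, so the very same representation formula $U$ is the unique solution in $\cK^1_{p_2,\theta_2,0}(\cD,T)$; in particular $U\in\cK^1_{p_2,\theta_2,0}(\cD,T)$. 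Combining the two identifications gives $u=U\in\cK^1_{p_2,\theta_2,0}(\cD,T)$, which is the assertion.

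The one point that needs genuine care—and which I expect to be the main (indeed the only) obstacle—is the tacit claim that the two invocations of Theorem~\ref{thm:ex:uni:2DCone} produce literally the same object $U$. The deterministic convolutions against $\Gamma$ and $\Gamma_{y^i}$ are ordinary Bochner/Lebesgue integrals of one and the same integrand, so they are unambiguous. For the stochastic convolution $\sum_k\int_0^t\int_\cD \Gamma(t-s,x,y)g^k(s,y)\,dy\,dw^k_s$ one must check that its value, as an It\^o integral, does not depend on the ambient $L_p$-framework used to control it: the integrand is determined by $g$ and $\Gamma$ alone, and the It\^o integral is defined as an $L_2(\Omega)$-limit of integrals of simple processes, so the two constructions agree $\prob$-a.s.\ for a.e.\ $(t,x)$. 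I would make this precise by recalling that the representation in~\cite{CioKimLee+2018} is built from the data independently of $(p,\theta)$, so that the identification $U_{(p_1,\theta_1)}=U_{(p_2,\theta_2)}$ holds as an equality of random fields modulo the usual null sets, after which the measure-theoretic bookkeeping is routine.
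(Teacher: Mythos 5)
Your proposal is correct and coincides with the paper's argument: the paper likewise invokes Theorem~\ref{thm:ex:uni:2DCone} for both pairs $(p_1,\theta_1)$ and $(p_2,\theta_2)$ and observes that the unique solutions in the two classes are given by one and the same representation formula, so that $u$ lies in $\cK^1_{p_2,\theta_2,0}(\cD,T)$. Your extra care about the $(p,\theta)$-independence of the stochastic convolution is a sensible elaboration of a point the paper leaves implicit.
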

     
\begin{proof}
This follows from the integral representation formula of the solution in Theorem \ref{thm:ex:uni:2DCone}, that is, the unique solutions in  
$\cK^{1}_{p_1,\theta_1,0}(\cD,T)$ and $\cK^{1}_{p_2,\theta_2,0}(\cD,T)$ have the same representation formula.
\end{proof}

\begin{remark}
\label{remark cones}
 To keep the presentation short, the results in this section are formulated only for angular domains $\cD\subseteq\bR^2$ with vertex at the origin and with one of the edges being the positive $x^1$-axis.
However, since every angular domain in $\bR^2$ can be seen as a translation of a rotation of such a domain, all results can be extended accordingly, as the Laplace operator is invariant under translations and rotations. 
More precisely,  fix $a\in (-\pi, \pi)$ and $x_0\in \bR^2$.  Let
   \begin{equation*}
\tilde{\cD}:=\tilde{\cD}_{\kappa_0}(x_0,a):=\big\{x\in \bR^2: x=x_0+(r\cos\vartheta,r\sin\vartheta),\; r>0,\;\vartheta\in (a,a+\kappa_0)\big\}.
\end{equation*}
Replacing $\cD$ and $\rho_\circ$ by $\tilde{\cD}_{\kappa_0}(x_0,a)$ and $\tilde{\rho}_\circ(x):=|x-x_0|$, respectively, in the definitions of the weighted Sobolev spaces from above, we can define analogous spaces, such as $K^n_{p,\theta}(\tilde{\cD})$, $\mathbb K^n_{p,\theta}(\tilde{\cD},T)$ and  
$\cK^1_{p,\theta,0}(\tilde{\cD},T)$, on $\tilde{\cD}$.  
Then,  the results in this section  hold with $\tilde{\cD}$ in place of $\cD$.  Indeed,
let $Q=(q_{ij})_{1\leq i,j\leq 2}$ be the orthogonal matrix such that $\tilde{\cD}_{\kappa_0}(x_0,a)=x_0+Q \cD_{\kappa_0}$. Then, since the Laplacian is invariant under the rotations and translations,  the statement that $u\in \cK^1_{p,\theta,0}(\tilde{\cD},T)$  satisfies
\begin{equation}
\label{eqn 4.10.7}
du=(\Delta u +f^0+f^i_{x^i})\,dt+g^k dw^k_t,
\end{equation}
 in the sense of distribution (analogous meaning to Definition~\ref{defn sol})  is the same as the statement that $v(t,x):=u(t,x_0+Qx)\in   \cK^1_{p,\theta,0}(\cD,T)$  satisfies
\[
dv=(\Delta v +\tilde{f}^0+\tilde{f}^i_{x^i})\, dt+\tilde{g}^k \, dw^k_t,
\]
where
$
\tilde{f}^0(t,x)=f^0(t,x_0+Qx)$, $\tilde{f}^i(t,x)=q_{1i}f^1(t,x_0+Qx)+q_{2i}f^2(t,x_0+Qx)$, $i=1,2$,
and $\tilde{g}(t,x)=g(t,x_0+Qx)$. 
Hence, all existence and uniqueness results as well as all estimates  can be extended to general angular domains, since, obviously,
$$
\|h(x)\|_{K^n_{p,\theta}(\tilde{\cD})} \sim \|h(x_0+Qx)\|_{K^n_{p,\theta}(\cD)}
$$
for any $h\in K^n_{p,\theta}(\tilde{\cD})$.  
 To extend Lemma~\ref{lem 4.5.1}, formally set $\Delta u=0$ in~\eqref{eqn 4.10.7}.
\end{remark}

\mysection{Proof of Theorem  \ref{lem:estim:2DCone:Theta}}\label{sec:proof:lift}

In this section we give a detailed proof of the key estimate from Theorem~\ref{lem:estim:2DCone:Theta}.
Our proof is based on a suitable a-priori estimate for the stochastic heat equation on $\cont^1$ domains, as presented in Lemma~\ref{lem 10} below.
We use this result to establish an estimate for the solution on a subdomain of $\cD$ which is bounded away from the vertex and from infinity (see Lemma~\ref{lem:estim:2DCone:U1} below).
Then we can prove Theorem~\ref{lem:estim:2DCone:Theta} by using a dilation argument, as $\cD$ is invariant under positive dilation.
For this strategy to succeed, it is crucial that the constant in Lemma~\ref{lem 10} does not depend on the time horizon $T$.

We start with the definition of the weighted Sobolev spaces $H^n_{p,\Theta}(G)$ on $\cont^1$  domains $G\subseteq\bR^d$ ($d\geq 1$), which we need for the statement of Lemma~\ref{lem 10}. First we recall the definition of a $\cont^1$ domain. 

\begin{defn}\label{definition domain}
  Let $G$ be a  domain in $\bR^d$, $d\geq 1$.
We write  $\partial G\in \cont^1_u$  and say that $G$ is a  $\cont^1$ domain  if there  exist constants $r_0, K_0\in(0,\infty)$ such that 
for any  $x_0 \in \partial G$ there exists
 a one-to-one continuously differentiable mapping $\Psi$ of
 $B_{r_0}(x_0)$ onto a domain $J\subset\bR^d$ such that
\begin{enumerate}[align=right,label=\textup{(\roman*)}] 
\item $J_+:=\Psi(B_{r_0}(x_0) \cap G) \subset \bR^d_+$ and
$\Psi(x_0)=0$;

\item $\Psi(B_{r_0}(x_0) \cap \partial  G)= J \cap \{y\in
\bR^d:y^1=0 \}$;

\item $\|\Psi\|_{\cont^{1}(B_{r_0}(x_0))}  \leq K_0 $ and
$|\Psi^{-1}(y_1)-\Psi^{-1}(y_2)| \leq K_0 |y_1 -y_2|$ for any $y_i
\in J$;

\item  $\Psi_x$ is uniformly continuous in $B_{r_0}(x_0)$.

 \end{enumerate}
\end{defn}
Throughout this article, we assume that $G$ is  either $\bR^d_+:=\{x\in \bR^d\colon x^1>0\}$ or a bounded $\cont^1$ domain in $\bR^d$ ($d\geq 1$). Note that in both cases, $G$ is of class $\cont^1_u$ in the sense of \cite[Assumption~2.1]{Kim2004}.
Recall that $\rho(x)=\rho_G(x)=\mathrm{dist}(x,\partial G)$ for $x\in G$;
$\rho(x)=x^1$ if $G=\bR^2_+$. 
For $p>1$ and $\Theta\in\bR$, we write
\[
L_{p,\Theta}(G):=L_{p}(G,\rho^{\Theta-d} dx;\R) \quad 
\text{and}
\quad
L_{p,\Theta}(G;\ell_2):=L_p(G,\rho^{\Theta-d} dx;\ell_2)
\]
for the weighted $L_p$-spaces of real-valued/$\ell_2$-valued functions with weight $\rho^{\Theta-d}$.
For $n\in  \{0,1,2,\ldots\}$, by $H^n_{p,\theta}(G)$ we denote the space of all 
$f\in L_{p,\Theta}(G)$  such that
\begin{equation}
     \label{eqn 4.9.5}
\|f\|^p_{H^n_{p,\Theta}(G)}:=\sum_{|\alpha|\leq n}  \|\rho^{\abs{\alpha}} D^\alpha f\|^p_{L_{p,\Theta}(G)}<\infty.
\end{equation}
Moreover, we define the dual spaces
\[
H^{-n}_{p,\Theta}(G):=\grklam{H^n_{p',\Theta'}(G)}^*,\qquad\frac{1}{p}+\frac{1}{p'}=1,\quad \frac{\Theta}{p}+\frac{\Theta'}{p'}=d.
\]
The space $H^n_{p,\Theta}(G;\ell_2)$ is defined analogously for $n\in \bZ$.

To state the main properties of these spaces, we introduce some additional notation. 
For $k\in\{0,1,2,\ldots\}$, let
$$
|f|^{(0)}_{k}:=|f|^{(0)}_{k,G} :=\sup_{\substack{x\in G\\
|\beta| \leq k}}\rho^{|\beta|}(x)|D^{\beta}f(x)|.
$$
If $G$ is bounded, let $\psi$ be a bounded $\cont^\infty$ function defined in $G$ with $|\psi|^{(0)}_k+|\psi_x|^{(0)}_k<\infty$ for any $k$, which is comparable to $\rho$, i.e., $N^{-1}\rho(x)\leq \psi(x)\leq N \rho(x)$ for some constant $N>0$; see, e.g., \cite[ Section~2]{KimKry2004}.
It is known that, if $G$ is bounded, then  the map $\Psi$ in Definition~\ref{definition domain} can be chosen in such a way that $\Psi$ is infinitely differentiable in $B_{r_0}(x_0)\cap G$ and for any multi-index $\alpha$
\begin{equation}
                \label{eqn 12.4.9}
\sup_i \sup_{B_{r_0}(x_0) \cap G} \rho^{|\alpha|}|D^{\alpha}\Psi_{x^i}|\leq N(\alpha)<\infty;
 \end{equation}
see, e.g., \cite{KimKry2004} or the proof of \cite[Lemma~4.9]{KimLee2011}.
Actually, after appropriate rotation and translation, one can take $\Psi(x^1,x')=(\psi(x),x')$.   By \cite[Theorem~3.2]{Lot2000} and \eqref{eqn 12.4.9} above,   if  $\text{supp}\, u\subset B_{r}(x_0)\cap \overline{G}$ and $r<r_0/{K_0}$,  then for any $\nu, \Theta \in \bR$ and $n\in \bZ$, we have
 \begin{equation}\label{eqn 12.7.1}
 \|\psi^{\nu}u\|_{H^{n}_{p,\Theta}(G)} \sim \|(x^1)^{\nu}(u\circ\Psi^{-1})\|_{H^{n}_{p,\Theta}(\bR^d_+)}.
 \end{equation}

Here are some  other properties of the spaces $H^{n}_{p,\Theta}(G)$ taken from~\cite{Lot2000} (see also \cite{KimKry2004,Kry1999c}). If $G=\bR^d_+$, let $\psi(x):=\rho(x)=x^1$.

\begin{lemma}\label{collection}
\begin{enumerate}[leftmargin=*,label=\textup{(\roman*)}, wide] 
\item  $\cont^\infty_c(G)$ is dense in $H^n_{p,\Theta}(G)$.

\item\label{col:psiD} For any $n\in \bZ$ the operators $\psi D, D \psi: H^{n}_{p,\Theta}(G)\to
H^{n-1}_{p,\Theta}(G)$ are bounded linear operators. In fact, 
for any $u\in H^{n}_{p,\Theta}(G)$,
$$
\|u\|_{H^{n}_{p,\Theta}(G)} \leq N\|\psi
u_{x^i}\|_{H^{n-1}_{p,\Theta}(G)}+N\|u\|_{H^{n-1}_{p,\Theta}(G)}
\leq N \|u\|_{H^{n}_{p,\Theta}(G)},
$$
$$
\|u\|_{H^{n}_{p,\Theta}(G)} \leq N\|(\psi
u)_{x^i}\|_{H^{n-1}_{p,\Theta}(G)}+N\|u\|_{H^{n-1}_{p,\Theta}(G)}
\leq N \|u\|_{H^{n}_{p,\Theta}(G)}
$$
hold, where $N$ is independent of $u$ and  $i\in\{1,\ldots,d\}$.

\item\label{col:psi:theta} For any $\nu\in \bR$, $n\in \bZ$,
$\psi^{\nu}H^{n}_{p,\Theta}(G)=H^{n}_{p,\Theta-p\nu}(G)$
and
\begin{equation}
                            \label{open}
\|u\|_{H^{n}_{p,\Theta-p\nu}(G)} \leq N
\|\psi^{-\nu}u\|_{H^{n}_{p,\Theta}(G)}\leq
N\|u\|_{H^{n}_{p,\Theta-p\nu}(G)}.
\end{equation}

\item\label{col:multiplier}  For $\Theta\in \bR$ and $n\in \bZ$,
$$
\|a u\|_{H^{n}_{p,\Theta}(G)}\leq
N(d,n)|a|^{(0)}_{|n|}\|u\|_{H^{n}_{p,\Theta}(G)}.
$$

\item\label{col:bounded:embedding} If $G$ is bounded and $\Theta_1<\Theta_2$, then $H^n_{p,\Theta_1}(G)\subset H^n_{p,\Theta_2}(G)$ and
$$
\|u\|_{H^n_{p,\Theta_2}(G)}\leq N(n,d,\Theta_1,\Theta_2) \|u\|_{H^n_{p,\Theta_1}(G)}.
$$

\item\label{col:cptsupport} Let $n\in \bZ$  and $u\in H^{n}_{p,\Theta}(G)$ with  $K:=\mathrm{supp}\, u  \subset G$.  Then for some $N=N(d,p,\Theta,K)>0$,
$$
N^{-1}\|u\|_{H^n_p(\bR^d)}\leq \|u\|_{H^n_{p,\Theta}(G)}\leq N\|u\|_{H^n_p(\bR^d)},
$$
where $H^n_p(\bR^d):=\{u: D^{\alpha}u\in L_p(\bR^d), \, \forall\, |\alpha|\leq n\}$ if $n\geq 0$, and otherwise  it is the dual space of $H^{-n}_q(\bR^d)$, where $\frac1p+\frac1q=1$.
\end{enumerate}

\end{lemma}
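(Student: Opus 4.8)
The plan is to reduce all six assertions to the model case of the half-space $G=\bR^d_+$ (where $\psi(x)=x^1$), in which they are established in the foundational works of Krylov and Lototsky, and then to transfer them to a bounded $\cont^1$ domain by a localization argument built on the flattening maps $\Psi$ from Definition~\ref{definition domain} and the norm equivalence~\eqref{eqn 12.7.1}.

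On the half-space the key device is the exponential change of variables $u(x)\mapsto u(e^{x^1},x')$, which conjugates the weighted norm of $H^n_{p,\Theta}(\bR^d_+)$ to a translation-invariant Banach-space-valued Bessel-potential norm on $\bR^d$. Under this conjugation, multiplication by $(x^1)^\nu$ becomes an exponential factor together with a shift of the weight parameter, which yields~\ref{col:psi:theta}; the operators $x^1D$ and $Dx^1$ become first-order operators with bounded coefficients, which yields~\ref{col:psiD}; and the multiplier bound~\ref{col:multiplier} together with the density statement~(i) follow from mollification and the corresponding classical statements on $\bR^d$. All of this is carried out in~\cite{Lot2000} (see also \cite{KimKry2004, Kry1999c}).

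To pass to a bounded $\cont^1$ domain $G$, I would fix a finite cover of $\partial G$ by balls $B_{r_0}(x_0)$ together with a relatively compact interior open set, choose a subordinate partition of unity $\{\zeta_j\}$, and write $u=\sum_j\zeta_j u$. Each boundary piece $\zeta_j u$ has support in a small ball and is straightened by the associated $\Psi$; here~\eqref{eqn 12.7.1}, whose validity rests on the derivative bounds~\eqref{eqn 12.4.9} for $\Psi$, reduces its weighted norm up to equivalence to a half-space norm, so the model-case properties apply. The interior piece has support bounded away from $\partial G$, so~\ref{col:cptsupport} applies to it directly. Summing over $j$ and absorbing the lower-order commutators $[\zeta_j,\psi D]$ and $[\zeta_j,D\psi]$ by means of~\ref{col:multiplier} recovers the density~(i) and~\ref{col:psiD}--\ref{col:multiplier} as well as~\ref{col:cptsupport} on $G$. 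The remaining item~\ref{col:bounded:embedding} is elementary: since $\rho$ is bounded on $G$, for $\Theta_1<\Theta_2$ one has $\rho^{\Theta_2-\Theta_1}\leq N$, hence $\rho^{\Theta_2-d}\leq N\rho^{\Theta_1-d}$ pointwise, and the embedding for $n\geq 0$ follows termwise from the definition~\eqref{eqn 4.9.5}; for $n<0$ it passes over by duality using $H^{-n}_{p,\Theta}(G)=(H^{n}_{p',\Theta'}(G))^*$. The same comparison of weights on a compact subset underlies~\ref{col:cptsupport}.

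The main obstacle is the boundary localization for negative $n$: one must check that the straightening $\Psi$ induces a bounded invertible map between the weighted spaces, which requires combining the chain-rule bounds on $D^\alpha\Psi$ coming from~\eqref{eqn 12.4.9} with the multiplier property~\ref{col:multiplier} simultaneously, and that the commutators produced by the cutoffs are genuinely of lower order, so that they are absorbed rather than accumulated. Once the reduction is in place, the only substantial analytic input, namely boundedness of the relevant Fourier multipliers after the exponential change of variables, is exactly what is supplied by~\cite{Lot2000}, so beyond the reduction the argument is essentially a matter of bookkeeping and citation.
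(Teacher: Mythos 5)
Your proposal is correct and is consistent with the paper, which gives no proof of this lemma at all: it simply quotes these properties from \cite{Lot2000} (see also \cite{KimKry2004,Kry1999c}), where exactly the reduction you describe — half-space model case via the exponential change of variables, then transfer to bounded $\cont^1$ domains through the flattening maps and a partition of unity, with \ref{col:bounded:embedding} and \ref{col:cptsupport} handled by pointwise weight comparison for $n\geq 0$ and by duality for $n<0$ — is carried out. Your sketch is, if anything, more detailed than what the paper provides.
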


Note that, by Lemma~\ref{collection}\ref{col:multiplier} and the properties of $\psi$, $\psi$ is a point-wise multiplier in $H^n_{p,\Theta}(G)$   if $G$ is bounded.

 For the corresponding spaces of predictable $H^n_{p,\Theta}(G)$/$H^n_{p,\Theta}(G;\ell_2)$-valued stochastic processes we use the abbreviations
$$
\bH^{n}_{p,\Theta}(G,T)
:=
L_p(\Omega_T, \pred;H^{n}_{p,\Theta}(G))
\quad 
\text{and}\quad\bL_{p,\Theta}(G,T):=\bH^0_{p,\Theta}(G,T),
$$
as well as
\[
\bH^n_{p,\Theta}(G,T;\ell_2)
:=
L_p(\Omega_T, \pred;H^{n}_{p,\Theta}(G;\ell_2))\quad 
\text{and}\quad\bL_{p,\Theta}(G,T;\ell_2):=\bH^0_{p,\Theta}(G,T;\ell_2).
\]
 The following classes of stochastic processes are tailor-made for the analysis of Equation~\eqref{eq:SHE:Intro} on $G$.
\begin{defn}
For $p\geq 2$ and $\Theta\in\bR$ we write $u\in\frH^n_{p,\Theta,0}(G,T)$  if
$u\in\bH^n_{p,\Theta-p}(G,T)$
 and
there exist 
$f\in \bH^{n-2}_{p,\Theta+p}(G,T)$ and
 $g\in \bH^{n-1}_{p,\Theta}(G,T;\ell_2)$
such that 
\begin{equation*}\label{eqn 28_1}
 du=f\, dt +g^k \, dw^k_t,\quad t\in (0,T],
\end{equation*}
on $G$ in the sense of distributions with $u(0,\cdot)=0$; see Definition~\ref{defn sol} accordingly.  We denote 
$$
\bD u:=f\qquad\text{and}\qquad \bS u :=g.
$$

\end{defn}

 In this article, Equation~\eqref{eq:SHE:Intro} has the following meaning on $G$.
\begin{defn}\label{defn sol:C1}
We say that  $u\in\bH^{n}_{p,\Theta-p}(G,T)$ is
a solution to Equation~\eqref{eq:SHE:Intro} on $G$ 
in the class $\frH^{n}_{p,\Theta,0}(G,T)$
if
$u\in \frH^{n}_{p,\Theta,0}(G,T)$ with
\[
\bD u = \Delta u + f^0+f^i_{x^i}
\qquad
\text{and}
\qquad
\bS u = g.
\]
\end{defn}

\begin{remark}
All definitions above are given only for $\cont^1$ domains, as we say from the beginning that in this article $G$ is either a bounded $\cont^1$ domain or the half plane. However,  all the spaces defined above as well as the solution concept make sense on any domain $\domain\subset\bR^d$ with non-empty boundary.
\end{remark}

 Now we have all notions we need in order to state and prove the a-priori estimate for Equation~\eqref{eq:SHE:Intro} on bounded $\cont^1$ domains that we use to prove Lemma~\ref{lem:estim:2DCone:U1} and therefore Theorem~\ref{lem:estim:2DCone:Theta}.

\begin{lemma}\label{lem 10}
Let $G\subset\bR^d$ be a bounded $\cont^1$ domain, $p\geq2$, $n\in\{-1,0,1,\ldots\}$, and $d-1< \Theta<d-1+p$. Moreover, let $f^0\in \bH^{n}_{p,\Theta+p}(G,T)$, $f^i\in \bH^{n+1}_{p,\Theta}(G,T)$, $i=1,2$, and $g\in
\bH^{n+1}_{p,\Theta}(G,T;\ell_2)$. 
Assume  $u$ is a solution to Equation~\eqref{eq:SHE:Intro} on $G$
in the class $\frH^{1}_{p,\Theta_1,0}(G,T)$ for some $\Theta_1\in [\Theta, d+p-1)$. Then
$u\in \frH^{n+2}_{p,\Theta,0}(G,T)$ and
\begin{equation}\label{lem 10:estimate}
\begin{alignedat}{1}
\|u\|_{\bH^{n+2}_{p,\Theta-p}(G,T)} 
&\leq N 
\Big(
\|u\|_{\bL_{p,\Theta}(G,T)}
+
\|f^0\|_{\bH^{n}_{p,\Theta+p}(G,T)}\\
&\quad\qquad\qquad \quad + \sum_i\|f^i\|_{\bH^{n+1}_{p,\Theta}(G,T)}+
\|g\|_{\bH^{n+1}_{p,\Theta}(G,T;\ell_2)}\Big), 
\end{alignedat}
\end{equation}
where $N=N(p,d, \Theta, n,G)$.  In particular, $N$ does not depend on $T$.
\end{lemma}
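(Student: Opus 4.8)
The plan is to reduce \eqref{lem 10:estimate} to the two model situations in which the weighted $L_p$-theory for the heat operator is available with a constant that does not depend on the time horizon: the half-space $\bR^d_+$ near $\partial G$ and the whole space $\bR^d$ in the interior of $G$. The two pillars I would rely on are (i) the $T$-independent weighted a-priori estimate for $du=(\Delta u+f^0+f^i_{x^i})\,dt+g^k\,dw^k_t$ on $\bR^d_+$, valid precisely for $d-1<\Theta<d-1+p$, as developed in the analytic approach on half-spaces and $\cont^1$ domains (see \cite{Kim2004,KimKry2004} and \cite{Lot2000}), and (ii) the classical $T$-independent $L_p$-estimate for the heat equation on $\bR^d$, in which the weight is irrelevant. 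It is the half-space model that forces the range $d-1<\Theta<d-1+p$, and no wider range is possible there; both estimates are free of a zero-order term, which is what ultimately allows the final constant to be independent of $T$, since no Gronwall step is needed.

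First I would fix $r<r_0/K_0$, cover $\overline G$ by finitely many balls $B_r(x_\ell)$, some centered on $\partial G$ and the rest contained in $G$, and choose a subordinate partition of unity $(\zeta_\ell)\subset\cont^\infty_c$ with $\sum_\ell\zeta_\ell\equiv 1$ on $\overline G$. Since the $\zeta_\ell$ do not depend on $t$, each localized process $u\zeta_\ell$ again solves an equation of the same form, now with free terms $\zeta_\ell f^0$, $\zeta_\ell f^i$, $\zeta_\ell g$ together with the commutator terms arising from $\Delta(u\zeta_\ell)=\zeta_\ell\Delta u+2u_{x^i}(\zeta_\ell)_{x^i}+u\Delta\zeta_\ell$; after rewriting the first-order commutator in divergence form, all of these involve at most $n+1$ derivatives of $u$. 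For a boundary ball I would flatten $\partial G$ by the chart $\Psi_\ell$ and transport $u\zeta_\ell$ to $\bR^d_+$ using the norm equivalence \eqref{eqn 12.7.1} together with the bounds \eqref{eqn 12.4.9} on $\Psi_\ell$, apply the half-space estimate, and transport back; for an interior ball I would pass to unweighted norms via Lemma~\ref{collection}\ref{col:cptsupport} (the weight is comparable to a constant on the support of $\zeta_\ell$) and apply the whole-space estimate. Summing over the finitely many $\ell$ reassembles $\|u\|_{\bH^{n+2}_{p,\Theta-p}(G,T)}$ on the left and leaves on the right the required data norms plus a contribution bounded by $\|u\|_{\bH^{n+1}_{p,\Theta-p}(G,T)}$ coming from the commutators.

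The estimate is then closed by induction on $n$. In the base case $n=-1$ the commutator terms are of order zero in $u$, and on the support of the derivatives of $\zeta_\ell$ the weight is comparable to a constant, so they are controlled directly by $\|u\|_{\bL_{p,\Theta}(G,T)}$; this is exactly the $u$-term on the right of \eqref{lem 10:estimate}. For the inductive step I would feed the case $n-1$, namely $\|u\|_{\bH^{n+1}_{p,\Theta-p}(G,T)}\leq N(\|u\|_{\bL_{p,\Theta}(G,T)}+\|f^0\|_{\bH^{n-1}_{p,\Theta+p}(G,T)}+\sum_i\|f^i\|_{\bH^{n}_{p,\Theta}(G,T)}+\|g\|_{\bH^{n}_{p,\Theta}(G,T;\ell_2)})$, into the commutator contribution, and use that the data norms at level $n-1$ are dominated by those at level $n$. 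One subtlety is that $u$ is only assumed to belong to the larger class $\frH^1_{p,\Theta_1,0}(G,T)$ with $\Theta_1\geq\Theta$; to upgrade the weight from $\Theta_1$ to $\Theta$ I would combine the solvability part of the model theory, which produces a solution in the target class $\frH^{n+2}_{p,\Theta,0}(G,T)$ with the stated data, with uniqueness in $\frH^1_{p,\Theta_1,0}(G,T)$, where both $u$ and this solution live, to identify the two.

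The main obstacle I anticipate is twofold. The first is the regularity-improvement issue just mentioned: a priori $u$ need not lie in the $\Theta$-scale, so the model estimates cannot be applied to $u\zeta_\ell$ before this is known; the clean way around it is to secure membership in $\frH^{n+2}_{p,\Theta,0}(G,T)$ by solvability-plus-uniqueness (or, equivalently, by first mollifying and passing to the limit) and only then run the a-priori estimate. The second, more bookkeeping-heavy obstacle is to verify that every constant entering the localization---the number and radii of the charts, the norms of $\Psi_\ell$ and $\zeta_\ell$, and the constants in the two model estimates---is independent of $T$, so that the resulting $N=N(p,d,\Theta,n,G)$ genuinely does not depend on $T$. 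This $T$-independence is the whole point of the lemma, since it is what makes the dilation argument in Section~\ref{sec:proof:lift} work when passing from $G$ to the angular domain $\cD$.
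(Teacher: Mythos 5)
Your proposal is correct and follows essentially the same route as the paper: the paper likewise upgrades $u$ to the class $\frH^{n+2}_{p,\Theta,0}(G,T)$ by combining solvability in the target class with uniqueness in $\frH^{1}_{p,\Theta_1,0}(G,T)$, and it obtains the $T$-independent a-priori bound from exactly the partition-of-unity/half-space/whole-space localization you describe (quoted as the key estimate (5.6) in the proof of \cite[Theorem~2.9]{Kim2004}), observing that the only $T$-dependence there enters through a final Gronwall step, which is avoided by retaining $\|u\|_{\bL_{p,\Theta}(G,T)}$ on the right-hand side. The sole difference is presentational: the paper cites and inspects the existing proof rather than redoing the localization, and it closes the order of the $u$-term by the same downward iteration in $n$ (using the bounded-domain embedding $H^{n+1}_{p,\Theta}(G)\subset H^{n+1}_{p,\Theta-p}(G)$ is false in that direction, rather $\|u\|_{H^{n+1}_{p,\Theta}(G)}\leq N\|u\|_{H^{n+1}_{p,\Theta-p}(G)}$ since $\Theta-p<\Theta$) that you phrase as an induction.
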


\begin{proof}
\emph{Step 1.} First we prove that $u\in \frH^{n+2}_{p,\Theta}(G,T)$. 
By~
\cite[Theorem~2.9]{Kim2004}, under the given assumptions, there exists a solution $v\in \frH^{n+2}_{p,\Theta}(G,T)$.  Since $G$ is bounded, by Lemma~\ref{collection}\ref{col:bounded:embedding}, 
$$
\bH^{n+2}_{p,\Theta-p}(G,T)\subset \bH^{1}_{p,\Theta-p}(G,T)\subset\bH^1_{p,\Theta_1-p}(G,T),
$$
and therefore $v\in \frH^1_{p,\Theta_1}(G,T)$.  
By the uniqueness part of \cite[Theorem~2.9]{Kim2004} we get $u=v$ (in 
$\frH^1_{p,\Theta_1}(G,T)$).

\noindent\emph{Step 2.}  We prove Estimate~\eqref{lem 10:estimate}.  
In fact, by \cite[Theorem~2.9]{Kim2004}, this estimate holds even without the term $\|u\|_{\bL_{p,\Theta}(G,T)}$ on the right hand side if we allow  the constant $N$ to depend on $T$. 
However, a close look at the proof of \cite[Theorem~2.9]{Kim2004} reveals that, indeed, if we leave this term on the right hand side, the constant can be kept independent of $T$, since the dependence on $T$ comes in only in the very last step of the relevant part of the proof of \cite[Theorem~2.9]{Kim2004}, when a Gronwall argument is used in order to get rid of the terms that depend on $u$ on the right hand side.

Instead of reproving~\cite[Theorem~2.9]{Kim2004}, we illustrate the relevant steps in the proof and the changes required to obtain independence of $T$.
The key estimate is (5.6) of ~\cite{Kim2004}, which says that 
\begin{equation}\label{eqn 4.7.1}
\begin{alignedat}{1}
  \|u\|_{\bH^{n+2}_{p,\Theta-p}(G,T)}&\leq N\sgrklam{\|\psi u_{x}\|_{\bH^n_{p,\Theta}(G,T)}+\|u\|_{\bH^n_{p,\Theta}(G,T)}\\
  &\quad \qquad\qquad\quad +\|\psi(f^0+f^i_{x^i})\|_{\bH^n_{p,\Theta}(G,T)}+\|g\|_{\bH^{n+1}_{p,\Theta}(G,T;\ell_2)} }. 
\end{alignedat}
\end{equation}
In our setting, i.e., for the stochastic heat equation, the constant $N$ in this estimate does not depend on $T$.
Indeed, as  explained in detail in the proof of \cite[Theorem~2.9]{Kim2004}, by using a suitable partition of unity, Estimate~\eqref{eqn 4.7.1} is obtained through a combination of an a-priori estimate on the half space (\cite[Theorem~2.10]{Kim2004}) and its analogue on the entire space (\cite[Theorem~5.1]{Kry1999}). 
The former theorem does not add any dependence on $T$ as the constant therein is explicitly proven to be independent of $T$.
This is different for \cite[Theorem~5.1]{Kry1999}: The constant therein may indeed depend on $T$. 
However, this dependence only occurs if we consider equations with variable coefficients. 
For the stochastic heat equation we may use the a-priori estimate from~\cite[Theorem~4.2]{Kry1999} instead, which holds with a constant that does not depend on $T$.
Note that on the left hand side of the estimate in~\cite[Theorem~4.2]{Kry1999}, we have the $L_p$-norm of the second order derivatives of the solution.
However, this is not a problem since in the proof of~\eqref{eqn 4.7.1} we only use this estimate for the solution of a stochastic heat equation with compact support in $\bR^d$ and, due to Poincar\'e's inequality, for a function $v$ with compact support in $\bR^d$, the norms $\|v\|_{H^{n+2}_p}$, $\sum_i\|v_{x^i}\|_{H^{n+1}_p}$ and $\sum_{i,j}\|v_{x^ix^j}\|_{H^n_p}$ are all equivalent.

To derive~\eqref{lem 10:estimate} from \eqref{eqn 4.7.1} we argue as follows: Using  \eqref{eqn 4.7.1} and the basic properties of the weighted Sobolev spaces $H^n_{p,\Theta}(G)$ from Lemma~\ref{collection}, we easily obtain
\begin{equation}\label{eqn 4.7.2}
\begin{alignedat}{1}
  \|u\|_{\bH^{n+2}_{p,\Theta-p}(G,T)}
  &\leq N \sgrklam{\|u\|_{\bH^{n+1}_{p,\Theta}(G,T)}+\|f^0\|_{\bH^n_{p,\Theta+p}(G,T)}\\
  &\quad \qquad\qquad\qquad\quad +\sum_i\|f^i\|_{\bH^{n+1}_{p,\Theta}(G,T)}+\|g\|_{\bH^{n+1}_{p,\Theta}(G,T;\ell_2)} }. 
\end{alignedat}
\end{equation}
Thus if  $n=-1$, then \eqref{lem 10:estimate} is proved. If $n\geq 0$, then another application of Lemma~\ref{collection}\ref{col:bounded:embedding} shows that~\eqref{eqn 4.7.2} implies
\begin{equation*}
\begin{alignedat}{1}
  \|u\|_{\bH^{n+2}_{p,\Theta-p}(G,T)}
  &\leq N \sgrklam{\|u\|_{\bH^{n+1}_{p,\Theta-p}(G,T)}+\|f^0\|_{\bH^n_{p,\Theta+p}(G,T)}\\
  &\quad \qquad\qquad\qquad\quad +\sum_i\|f^i\|_{\bH^{n+1}_{p,\Theta}(G,T)}+\|g\|_{\bH^{n+1}_{p,\Theta}(G,T;\ell_2)} },
\end{alignedat}
\end{equation*}
which means that we can control $\|u\|_{\bH^{n+2}_{p,\Theta-p}(G,T)}$ by $\|u\|_{\bH^{n+1}_{p,\Theta-p}(G,T)}$ and suitable norms of the free terms.
After repeating this step for $n$ more times, we arrive at 
\begin{equation*}
\begin{alignedat}{1}
  \|u\|_{\bH^{n+2}_{p,\Theta-p}(G,T)}
  &\leq N \sgrklam{\|u\|_{\bH^{1}_{p,\Theta-p}(G,T)}+\|f^0\|_{\bH^n_{p,\Theta+p}(G,T)}\\
  &\quad \qquad\qquad\qquad\quad +\sum_i\|f^i\|_{\bH^{n+1}_{p,\Theta}(G,T)}+\|g\|_{\bH^{n+1}_{p,\Theta}(G,T;\ell_2)} }.
\end{alignedat}
\end{equation*}
Estimate~\eqref{lem 10:estimate} follows by applying~\eqref{eqn 4.7.2} with $n=-1$. Note that all constants in the estimates above are independent of $T$.
  \end{proof}

Now we go back to $\cD\subset\bR^2$ and  proceed to prove Theorem~\ref{lem:estim:2DCone:Theta}. The key step is presented in Lemma~\ref{lem:estim:2DCone:U1} below. It provides an estimate of suitable weighted $L_p$-norms of the derivatives of the solution in 
\[
U_1:=\{x\in\cD\,\colon\,1<\abs{x}<4\}
\]
by appropriate weighted $L_p$-norms of $u$ and of the derivatives of the free terms on the slightly bigger domain
\[
V_1:=\{x\in\cD\,\colon\, 1/2<\abs{x}<8\}.
\]
As these domains are bounded away from the vertex, the estimate involves only the distance $\rho$ to the boundary.
It is crucial that the constant in Lemma~\ref{lem:estim:2DCone:U1} below does not depend on $T$. In the proof we use Lemma \ref{lem 10} with $d=2$.

\begin{lemma}\label{lem:estim:2DCone:U1}
 Let $p\ge 2$, $1<\Theta<p+1$, and $m\in\{0,1,2,\ldots\}$.  Moreover, let $u\in \mathcal{K}^{1}_{p,\theta,0}(\cD,T)$ be a solution to Equation~\eqref{eq:SHE:Intro} on $\cD$ for some $\theta\in\bR$. Then  
\begin{equation}\label{eqn 4.9.3}
\begin{alignedat}{1}
\E&\int_0^T \sum_{\abs{\alpha}\leq m+1} \int_{U_1} \Abs{\rho^{\abs{\alpha}-1}D^\alpha u}^p\rho^{\Theta-2}\,dx\,dt\\
&\leq
N\,
\E \int_0^T \int_{V_1} \ssgrklam{\abs{
u}^p 
+ 
\sum_{\abs{\alpha}\leq (m-1)\vee 0}\Abs{\rho^{\abs{\alpha}+1}D^{\alpha}f^0}^p+
 \sum_{\abs{\alpha}\leq m}\sum_i\Abs{\rho^{\abs{\alpha}}D^{\alpha}f^i}^p  
\\
&\qquad\qquad\qquad\qquad\qquad\qquad\qquad\qquad
+
\sum_{\abs{\alpha}\leq m} \Abs{\rho^{\abs{\alpha}}D^\alpha g}_{\ell_2}^p}\rho^{\Theta-2}\,dx\,dt,  
\end{alignedat}
\end{equation}
 where $N=N(p,\Theta,\kappa_0,m)$. In particular, $N$  does not depend on $T$.
\end{lemma}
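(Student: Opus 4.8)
The plan is to deduce the estimate from the a-priori estimate on bounded $\cont^1$ domains in Lemma~\ref{lem 10} (applied with $d=2$) by localizing to the annular region $V_1$, which is bounded away from the vertex and from infinity. On $\overline{V_1}$ the boundary $\partial\cD$ consists of two straight smooth edges, so $\cD$ looks like a $\cont^1$ domain there. Concretely, I first fix a bounded $\cont^1$ domain $G\subset\bR^2$ that coincides with $\cD$ on $\{1/4<\abs{x}<16\}$ (round off the corner at the origin and close up the far end smoothly); then $\rho_G=\rho$ on $\overline{V_1}$, and every function supported in $V_1$ is supported in $G$. I also fix a chain of nested annuli $U_1=W_0\subset W_1\subset\cdots\subset W_{m+1}\subseteq V_1$ together with cutoffs $\zeta_j\in\cont^\infty_c(V_1)$ satisfying $\zeta_j\equiv 1$ on $\overline{W_j}$ and $\mathrm{supp}\,\zeta_j\subset W_{j+1}$. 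Throughout I abbreviate
\[
\Phi_k(W):=\E\int_0^T\sum_{\abs{\alpha}\leq k}\int_{W}\Abs{\rho^{\abs{\alpha}-1}D^\alpha u}^p\rho^{\Theta-2}\,dx\,dt .
\]

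For a cutoff $\zeta=\zeta_j$, the product $\zeta u$ is supported in $V_1\subset G$, and since $u$ solves Equation~\eqref{eq:SHE:Intro} on $\cD$ while $G=\cD$ on $\mathrm{supp}\,\zeta$, the identity $\zeta\Delta u=\Delta(\zeta u)+(\Delta\zeta)u-2(\zeta_{x^i}u)_{x^i}$ shows that $\zeta u$ solves Equation~\eqref{eq:SHE:Intro} on $G$ with forcing terms $\tilde f^0=\zeta f^0-\zeta_{x^i}f^i+(\Delta\zeta)u$, $\tilde f^i=\zeta f^i-2\zeta_{x^i}u$, and $\tilde g^k=\zeta g^k$. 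Before invoking Lemma~\ref{lem 10} I would check that $\zeta u$ belongs to the starting class $\frH^{1}_{p,\Theta_1,0}(G,T)$ for some $\Theta_1\in[\Theta,p+1)$: on $V_1$ the weight $\rho_{\circ}$ is comparable to a constant, so the Kondratiev norm of $u$ is comparable to an unweighted Sobolev norm, and since $u$ has zero trace on $\partial\cD\cap\overline{V_1}$, a Hardy inequality supplies the missing weighted bound; the choice $\Theta_1=\max(\Theta,2)$ is admissible because $1<\Theta<p+1$ and $p\ge 2$. Applying Lemma~\ref{lem 10} with $n=m-1$ (allowed as $m\ge 0$) and using that the weights match exactly, $\rho^{p\abs{\alpha}}\rho^{(\Theta-p)-2}=\rho^{(\abs{\alpha}-1)p}\rho^{\Theta-2}$, the $\abs{\alpha}$-term of $\|\zeta u\|^p_{\bH^{m+1}_{p,\Theta-p}(G,T)}$ is precisely $\E\int\int\abs{\rho^{\abs{\alpha}-1}D^\alpha(\zeta u)}^p\rho^{\Theta-2}$. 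On $W_0=U_1$ we have $\zeta_0\equiv 1$, so $\Phi_{m+1}(U_1)\leq\|\zeta_0 u\|^p_{\bH^{m+1}_{p,\Theta-p}(G,T)}$.

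It remains to control the right-hand side of Lemma~\ref{lem 10}. The genuine-forcing parts $\zeta f^0$, $\zeta_{x^i}f^i$, $\zeta f^i$ and $\zeta g^k$ reproduce, via the Leibniz rule, the boundedness of $\zeta$ and its derivatives, the boundedness of $\rho$ on $V_1$, and the basic properties in Lemma~\ref{collection} (for $m=0$ the $\bH^{-1}$-term is absorbed using $H^0\hookrightarrow H^{-1}$), exactly the target free-term quantities integrated over $W_{j+1}\subseteq V_1$. The term $\|\zeta u\|^p_{\bL_{p,\Theta}(G,T)}$ equals $\E\int\int|\zeta u|^p\rho^{\Theta-2}\leq N\,\E\int\int_{V_1}|u|^p\rho^{\Theta-2}$, which is allowed on the right-hand side. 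The only problematic contributions come from the commutator pieces $(\Delta\zeta)u$ and $\zeta_{x^i}u$: differentiating them up to the required order produces $\sum_{\abs{\alpha}\leq m}\E\int\int_{W_{1}}|\rho^{\abs{\alpha}}D^\alpha u|^p\rho^{\Theta-2}$, and because $\rho$ is bounded on $V_1$ this is $\leq N\,\Phi_m(W_1)$ — that is, derivatives of $u$ of order at most $m$, one order below the $m+1$ on the left.

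Thus the localized estimate reads $\Phi_{m+1}(W_0)\leq N\big(\Phi_m(W_1)+\mathcal{R}_m(W_1)\big)$, where $\mathcal{R}_m$ collects the target free-term quantities of order $\leq m$. The main obstacle is removing the term $\Phi_m(W_1)$, which is of the same type as the left-hand side but on a larger region; the decisive point is that the gain is exactly one derivative order and the chain is finite. I would therefore iterate: applying the same localization with $\zeta_j$ on $W_j\subset W_{j+1}$ and Lemma~\ref{lem 10} with $n=m-1-j$ yields $\Phi_{m+1-j}(W_j)\leq N\big(\Phi_{m-j}(W_{j+1})+\mathcal{R}_{m-j}(W_{j+1})\big)$. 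After $m+1$ steps the derivative order drops to zero; at the final step ($n=-1$) no derivative falls on $u$ in the commutator, so the remaining $u$-contribution is exactly $\E\int\int_{W_{m+1}}|u|^p\rho^{\Theta-2}$. Since all $W_j\subseteq V_1$ and every collected free-term norm is of order $\leq m$ over a subset of $V_1$, telescoping these finitely many inequalities gives precisely \eqref{eqn 4.9.3}. This finite iteration (removing a lower-order remainder on a slightly enlarged domain) is the genuinely subtle step; everything else is bookkeeping with Lemma~\ref{collection} and the boundedness of $\rho$ on $V_1$, and no constant depends on $T$ because the constant in Lemma~\ref{lem 10} does not.
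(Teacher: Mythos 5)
Your proposal is correct and follows essentially the same route as the paper: localize to a bounded $\cont^1$ domain contained in (a neighbourhood of) $V_1$, apply the $T$-independent a-priori estimate of Lemma~\ref{lem 10} with $n=m-1$, and remove the commutator terms $\zeta_{x^i}u$, $(\Delta\zeta)u$ --- which cost one derivative of $u$ --- by a finite downward iteration that bottoms out at the unweighted $L_p$-norm of $u$ over $V_1$. The only difference is bookkeeping: the paper runs the iteration through decreasing powers $\eta^{l}, \eta^{l-2},\dots$ of a single radial cutoff with $l=2m+3$ on one fixed $G$, whereas you use a chain of nested annuli $W_0\subset\cdots\subset W_{m+1}$ with separate cutoffs; these devices are equivalent, and your iteration likewise settles the finiteness of the intermediate norms provided it is read bottom-up, as in the paper.
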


\begin{proof}
Assume that the integrals on the right hand side of~\eqref{eqn 4.9.3} are finite (if not, the statement is trivial). 
Fix a constant $\varepsilon \in (0,1/4)$, and  for $k=1,2,3$, let
$$U^k_1:=\{x\in \cD: 2^{-k\varepsilon}<|x|<2^{2+k\varepsilon}\}.
$$
Choose a $\cont^{\infty}$ radial non-negative  function $\eta=\eta(|x|)$ such that $\eta(t)=1$ for $t\in [1,4]$ and $\eta(t)=0$ if $t\not\in [2^{-\varepsilon}, 2^{2+\varepsilon}]$. Also choose a $\cont^{1}$ domain $G\subset\cD$ such that
$$
U^2_1 \subset G \subset U^3_1\subset V_1.
$$
By the choice of $\eta$ and $G$, $u\eta$ vanishes on the boundary of $G$ and there exists $N=N(\varepsilon)$ such that for all $x\in \cD \cap  \text{supp}\,\eta$,
\begin{equation}
 \label{rho}
N^{-1} \rho_{\cD}(x)\leq \rho_G (x) \leq N \rho_{\cD}(x).
\end{equation}
 Let $l:=2m+3$ and set $\zeta:=\eta^l$.  
Then
$$
d(\zeta u)=\grklam{\Delta (\zeta u)+u\Delta \zeta-f^i\zeta_{x^i}+f^0\zeta+(-2u\zeta_{x^i}+f^i\zeta)_{x^i}}dt+\zeta g^k dw^k_t, \quad t\in(0,T], 
$$
on $G$ in the sense of distributions.
 Moreover, $\zeta u\in\frH^1_{p,2,0}(G,T)$ since $u\in\cK^1_{p,\theta,0}(\cD,T)$ solves Equation~\eqref{eq:SHE:Intro} on $\cD$ and since, by Hardy's inequality, 
\begin{align*}
\nnrm{\zeta u}{L_{p,2-p}(G)}
\leq
N\,\nnrm{(\zeta u)_x}{L_{p,2}(G)}
\leq
N\,\nnrm{\zeta u}{K^1_{p,\theta-p}(\cD)}
\leq
N(p,\theta,G,\zeta)\,\nnrm{ u}{K^1_{p,\theta-p}(\cD)}.
\end{align*}
The second inequality above is due to $\rho_0 \sim 1$ on the support of $\zeta$.
Thus, by an application of Lemma~\ref{lem 10} with $n=m-1$, we obtain 
\begin{equation*}
\begin{alignedat}{1}
\|\zeta u\|_{\bH^{m+1}_{p,\Theta-p}(G,T)}
&\leq
 N \sgrklam{\sum_i\|\zeta f^i\|_{\bH^{m}_{p,\Theta}(G,T)}+\sum_i\|u\zeta_{x^i}\|_{\bH^m_{p,\Theta}(G,T)}\\
&\qquad\qquad\qquad + \|\zeta f^0\|_{\bH^{m-1}_{p,\Theta+p}(G,T)} 
 +\|u\Delta \zeta\|_{\bH^{m-1}_{p,\Theta+p}(G,T)}\\ 
&\qquad\qquad\qquad\qquad\quad + \|\zeta g\|_{\bH^m_{p,\Theta}(G,T;\ell_2)}
+ \|\zeta u\|_{\bL_{p,\Theta}(G,T)} },
\end{alignedat}
\end{equation*}
 once we can prove that all norms on the right hand side are finite. 
The norms that do not involve $u$ together with $\|\zeta u\|_{\bL_{p,\Theta}(G,T)}$ can be estimated by
\begin{align*}
\|\eta u\|_{\bL_{p,\Theta}(G,T)}
+
\|\eta f^0\|_{\bH^{(m-1)\vee 0}_{p,\Theta+p}(G,T)} 
+
\|\eta g\|_{\bH^m_{p,\Theta}(G,T;\ell_2)}
+
\sum_i\|\eta f^i\|_{\bH^{m}_{p,\Theta}(G,T)},
\end{align*}
which is finite since the right hand side of~\eqref{eqn 4.9.3} is finite (use~\eqref{rho}, Lemma~\ref{collection} (in particular, part~\ref{col:multiplier}) and the properties of $\eta$ to estimate the norms above by the right hand side of~\eqref{eqn 4.9.3}).
Moreover,  since  
$$
\|u\zeta_{x^i}\|_{H^m_{p,\Theta}(G)}
=
l \,\|u \eta^{l-1}\eta_{x^i}\|_{H^{m}_{p,\Theta}(G)}\leq  N\|u\eta^{l-2}\|_{H^m_{p,\Theta}(G)},
$$
and 
$$
\|u \Delta \zeta\|_{H^{m-1}_{p,\Theta+p}(G)}\leq N\|u\eta^{l-2}\|_{H^m_{p,\Theta}(G)},
$$
the condition
\begin{equation}
    \label{eqn 4.15.1}
\|\eta^{l-2} u\|_{\bH^m_{p,\Theta}(G,T)}<\infty
\end{equation}
is sufficient in order be able to apply Lemma~\ref{lem 10} and obtain
\begin{align*}
\|\eta^l u\|_{\bH^{m+1}_{p,\Theta-p}(G,T)}
&\leq
N \sgrklam{
\|\eta^{l-2} u\|_{ \bH^m_{p,\Theta}(G,T)}
+ \|\eta u\|_{\bL_{p,\Theta}(G,T)}  \\
&\quad\,\,+ \sum_i\|\eta f^i\|_{\bH^{m}_{p,\Theta}(G,T)}
+ \|\eta f^0\|_{\bH^{m-1}_{p,\Theta+p}(G,T)} 
+ \|\eta g\|_{\bH^m_{p,\Theta}(G,T;\ell_2)}}.\nonumber
\end{align*}
In particular, 
this shows that 
\[
\nnrm{\eta^{l-2} u}{\bH^{m}_{p,\Theta}(G,T)}<\infty
\quad\Rightarrow\quad
\nnrm{\eta^l u}{\bH^{m+1}_{p,\Theta-p}(G,T)}<\infty.
\]
In order to prove that~\eqref{eqn 4.15.1} holds, we argue as follows: 
Since $\|\eta^{l-2} u\|_{H^m_{p,\Theta}(G)}\leq N \|\eta^{l-2} u\|_{H^m_{p,\Theta-p}(G)}$,
we can iterate the arguments above with $m$ replaced by $m-j$ and $l$ replaced by $l-2j$ successively for $j=1,\ldots,m$. After finitely many steps we arrive at the statement that if 
$
\|\eta u\|_{\bL_{p,\Theta}(G,T)}<\infty
$, then
\begin{equation}\label{eqn 4.9.3.2}
\begin{alignedat}{1}
\|\eta^{2m+3} u\|_{\bH^{m+1}_{p,\Theta-p}(G,T)}
&\leq
N \sgrklam{
\|\eta u\|_{\bL_{p,\Theta}(G,T)} + \sum_i\|\eta f^i\|_{\bH^{m}_{p,\Theta}(G,T)}  \\
&\quad\quad\qquad
+ \|\eta f^0\|_{\bH^{(m-1)\vee 0}_{p,\Theta+p}(G,T)} 
+ \|\eta g\|_{\bH^m_{p,\Theta}(G,T;\ell_2)}}.
\end{alignedat}
\end{equation}
But, as already explained above, $\|\eta u\|_{\bL_{p,\Theta}(G,T)}$ is indeed finite since the right hand side of~\eqref{eqn 4.9.3} is assumed to be finite.
Therefore, Estimate~\eqref{eqn 4.9.3.2} holds.
Moreover, it proves~\eqref{eqn 4.9.3}, since \icp due to \eqref{rho} and that fact that  $\eta=1$ on $U_1$,
\begin{equation*}
\begin{alignedat}{1}
\sum_{|\alpha|\leq m+1}\int_{U_1}  \Abs{\rho^{\abs{\alpha}-1}D^\alpha u}^p\rho^{\Theta-2}\,dx 
&\leq 
\sum_{|\alpha|\leq m+1}\int_{G}  \Abs{\rho^{\abs{\alpha}-1}D^\alpha (u\eta^{2m+3})}^p\rho^{\Theta-2}\,dx \\
&\leq N\sum_{|\alpha|\leq m+1}\int_{G}  \Abs{\rho^{\abs{\alpha}-1}_GD^\alpha (u \eta^{2m+3})}^p\rho^{\Theta-2}_G \,dx\\
&=
N\nnrm{\eta^{2m+3} u}{H^{m+1}_{p,\Theta-p}(G)}^p,
\end{alignedat}
\end{equation*}
and, as already mentioned above, the right hand side of~\eqref{eqn 4.9.3.2} can be estimated from above by the right hand side of~\eqref{eqn 4.9.3}.
Note that all constants in the estimates above are independent of $T$.
\end{proof}

\vspace{0.1cm}

Now we can prove Theorem \ref{lem:estim:2DCone:Theta} by applying Lemma~\ref{lem:estim:2DCone:U1} to $u_n(t,x):=u(2^{2n}t,2^nx)$ for each $n\in\bZ$ and summing up the resulting estimates.

\begin{proof}[Proof of Theorem \ref{lem:estim:2DCone:Theta}]
For every $n\in\bZ$, let $u_n(t,x):=u(2^{2n}t, 2^nx)$, $x\in \cD$, $t\leq 2^{-2n}T$. Since
$u\in\cK^1_{p,\theta,0}(\cD,T)$ solves Equation~\eqref{eq:SHE:Intro} on $\cD$, for every $n\in \bZ$, $u_n\in\cK^1_{p,\theta,0}(\cD,2^{-2n}T)$ and
\begin{align*}
du_n
= \grklam{\Delta u_n +2^{2n}f^0_n+2^n(f^i_{n})_{x^i} }\,dt + 2^n g^k_n\, d(2^{-n}w^k_{2^{2n}t}), \qquad  t\in (0, 2^{-2n}T],
\end{align*}
on $\cD$ in the sense of distributions with $u_n(0,\cdot)=0$ and
\[
f^0_n(t,x)=f^0(2^{2n}t,2^nx), \,\, f^i_n(t,x)=f^i(2^{2n}t,2^nx),\,\, \text{ and }\,\,g_n(t,x)=g(2^{2n}t,2^nx).
\]
Note that $(2^{-n}w^k_{2^{2n}t})_{t\geq 0}$, $k=1,2,\ldots$, is a sequence of independent one-dimensional Wiener processes.
By Lemma \ref{lem:estim:2DCone:U1} applied to $u_n$ for  $t\leq 2^{-2n}T$, we have
\begin{align*}
\E &\int^{2^{-2n}T}_0 \;\sum_{k=0}^{m+1} \int_{U_1} |\rho^{k-1}(x)2^{nk}D^{k}u(2^{2n}t,2^nx)|^p \rho^{\Theta-2}(x)\,dx \,dt\\
&\leq N \;\E \int^{2^{-2n}T}_0 \int_{V_1}\ssgrklam{ \; |u(2^{2n}t,2^nx)|^p  \\
&  \hspace{3.4cm}+\sum_{k=0}^{(m-1)\vee 0}|\rho^{k+1}(x)2^{2n}2^{nk}D^kf^0(2^{2n}t,2^nx)|^p \\
&\hspace{3.4cm}+\sum_{k=0}^m \sum_i|\rho^k(x) 2^n 2^{nk} D^kf^i(2^{2n}t,2^nx)|^p \\
&\hspace{3.4cm}+\sum_{k=0}^m|\rho^k(x) 2^n 2^{nk} D^kg(2^{2n}t,2^nx)|^p_{\ell_2}\;}\rho^{\Theta-2}(x)\,     dx\, dt.
\end{align*}
Thus, if for $n\in\bZ$,
$$
U_n:=\{x\in \cD: 2^{n-1}<|x|<2^{n+1} \}\quad\text{and}\quad V_n:=\{x\in \cD: 2^{n-2}<|x|<2^{n+2}\},
$$
then, multiplying both sides by $2^{n(\theta-2-p)}$, changing variables $(2^{2n}t,2^n x)\to (t,x)$, and using the relations $\rho(2^{-n}x)=2^{-n}\rho(x)$ and $|x|\sim 2^n $ on $U_{n-1}$ and on $V_{n-1}$, we get
\begin{align*}
\E \int^T_0 &\sum_{k=0}^{m+1} \int_{U_{n-1}} |\rho^{k-1} D^{k}u|^p  \rho_\circ^{\theta-2}  \left(\frac{\rho}{\rho_{\circ}}\right)^{\Theta-2} \,dx \,dt\\
&\leq N\; \E \int^T_0 \int_{V_{n-1}}\ssgrklam{|\,\rho_{\circ}^{-1}u|^p+\sum_{k=0}^{(m-1)\vee 0}|\rho^{k+1}D^kf^0|^p \\
&\hspace{3.2cm} +\sum_{k=0}^m \sum_i|\rho^k D^kf^i|^p+ |\rho^k D^kg|^p_{\ell_2}} \rho_\circ^{\theta-2}  \ssgrklam{\frac{\rho}{\rho_{\circ}}}^{\Theta-2}  \, dx\, dt.
\end{align*}
By summing up with respect to $n\in \bZ$, we obtain the desired result. 
\end{proof}

The following uniqueness result on bounded $\cont^1$ domains will be used in Section~\ref{sec:Polygons} to treat the  stochastic heat equation on polygons.

\begin{lemma}\label{lem for uniqueness2}     
Let $G$ be a bounded $\cont^1$ domain in $\bR^d$. For $j=1,2$, let $p_j\geq 2$ and $\Theta_j\in (d-1,d-1+p_j)$, and assume that $u\in \frH^1_{p_1,\Theta_1,0}(G,T)$ is a solution to Equation~\eqref{eq:SHE:Intro} on $G$ with $f^0$, $f^i$ and $g$ satisfying
     $$f^0\in \bL_{p_j,\Theta_j+p_j}(G,T)\cap \bL_{p_j,d+p_j}(G,T),\quad f^i\in \bL_{p_j,\Theta_j}(G,T) \cap \bL_{p_j,d}(G,T),\;i=1,2, 
     $$
     $$
      g\in \bL_{p_j,\Theta_j}(G,T;\ell_2)\cap \bL_{p_j,d}(G,T;\ell_2),
     $$
for $j=1,2$.    
Then $u\in \frH^1_{p_2,\Theta_2,0}(G,T)$.
    
     \end{lemma}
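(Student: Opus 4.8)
The plan is to travel from $\frH^1_{p_1,\Theta_1,0}(G,T)$ to $\frH^1_{p_2,\Theta_2,0}(G,T)$ through the \emph{pivot weight} $\Theta=d$, following the chain
\[
(p_1,\Theta_1)\ \longrightarrow\ (p_1,d)\ \longrightarrow\ (p_2,d)\ \longrightarrow\ (p_2,\Theta_2),
\]
where each arrow is read as ``$u$ also belongs to the next solution class''. Since $d\in(d-1,d-1+p_j)$ for both $j=1,2$, all four parameter pairs are admissible, and by hypothesis the free terms $f^0,f^i,g$ lie in the spaces required by \cite[Theorem~2.9]{Kim2004} at each of them. At every arrow I would first invoke the existence part of \cite[Theorem~2.9]{Kim2004} to produce a solution in the target class, and then use the uniqueness part of the same theorem, applied in a suitable common space containing both $u$ and the newly produced solution, to identify the two; this is legitimate because all free terms are fixed, so both processes solve the same equation in the sense of distributions.

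For the first and last arrows the integrability exponent is fixed and only the weight changes. Here I would exploit that on a bounded domain larger weight parameters give larger spaces: by Lemma~\ref{collection}\ref{col:bounded:embedding}, $H^1_{p,\Theta_a}(G)\subset H^1_{p,\Theta_b}(G)$ whenever $\Theta_a<\Theta_b$. Thus, setting $\Theta^\ast:=\max(\Theta_1,d)$ and letting $v$ be the $(p_1,d)$-solution furnished by \cite[Theorem~2.9]{Kim2004}, both $u$ and $v$ lie in $\frH^1_{p_1,\Theta^\ast,0}(G,T)$, while $f^0,f^i,g$ sit in the corresponding $(p_1,\Theta^\ast)$-spaces by the same embedding; uniqueness at $(p_1,\Theta^\ast)$ then forces $u=v\in\frH^1_{p_1,d,0}(G,T)$. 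The final arrow is handled identically with $p=p_2$ and $\max(d,\Theta_2)$ in place of $\Theta^\ast$.

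The middle arrow, where the exponent jumps from $p_1$ to $p_2$, is the crux, since a direct weighted embedding is unavailable: the $L_p$-part of $\frH^1_{p,d,0}(G,T)$ carries the weight $\rho^{-p}$, whose measure is infinite near $\partial G$. The decisive point is that at $\Theta=d$ this weight scales \emph{consistently} with $p$: for any $w$ the integrand of $\|w\|_{L_{p,d-p}}^p$ is $|w|^{p}\rho^{-p}=(|w|/\rho)^{p}$, and that of $\|\rho w_{x}\|_{L_{p,d-p}}^p$ is $|w_{x}|^{p}$, so the $\frH^1_{p,d,0}(G,T)$-norm of $w$ is controlled by the plain $L_p(\Omega_T\times G,\ \prob\otimes dt\otimes dx)$-norms of the single functions $|w|/\rho$ and $|w_x|$. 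As $\prob\otimes dt\otimes dx$ is a finite measure and $p_1\le p_2$, the inclusion $L_{p_2}\subset L_{p_1}$ applies to these two functions. Hence, if $w$ is the $(p_2,d)$-solution from \cite[Theorem~2.9]{Kim2004}, this inclusion yields $w\in\frH^1_{p_1,d,0}(G,T)$, and uniqueness at $(p_1,d)$ gives $u=w\in\frH^1_{p_2,d,0}(G,T)$. This is exactly where the extra hypotheses $f^0\in\bL_{p_j,d+p_j}(G,T)$, $f^i\in\bL_{p_j,d}(G,T)$ and $g\in\bL_{p_j,d}(G,T;\ell_2)$ are consumed: they guarantee the pivot problem at $\Theta=d$ is well posed for both exponents. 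I expect the only genuine obstacle to be making this $p$-reduction rigorous, i.e.\ confirming that after the algebraic cancellation of $\rho$ one is left with a finite reference measure, which is precisely what the choice $\Theta=d$ secures; all remaining verifications (admissibility of the intermediate weights and membership of the free terms) are routine.
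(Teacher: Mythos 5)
Your argument is essentially the paper's own proof: both pivot through the weight $\Theta=d$, where the $\frH^1_{p,d,0}(G,T)$-norms reduce to unweighted $L_p$-norms of $u/\rho$ and $u_x$ so that the inclusion between $L_p$-spaces over the finite measure space $\Omega_T\times G$ applies, and both identify the competing solutions via the bounded-domain weight embedding (Lemma~\ref{collection}\ref{col:bounded:embedding}) and uniqueness of \cite[Theorem~2.9]{Kim2004} in a common class. The one small point to repair is that the lemma does not assume $p_1\leq p_2$ (the paper works with $q_1=p_1\vee p_2$ and $q_2=p_1\wedge p_2$); in the case $p_1>p_2$ your middle arrow is even simpler, since then $\frH^1_{p_1,d,0}(G,T)\subset \frH^1_{p_2,d,0}(G,T)$ directly by the same finite-measure inclusion.
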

     
     \begin{proof}
    By \cite[Theorem 2.9]{Kim2004}, we can define  $v_1$ and $v_2$ as the solution to the equation in $\frH^1_{p_1,d,0}(G,T)$ and $\frH^1_{p_2,d,0}(G,T)$ respectively. Denote $q_1=p_1 \vee p_2$ and $q_2=p_1 \wedge p_2$. Then by the uniqueness result in $\frH^1_{q_2,d,0}(G,T)$ we conclude $v_1=v_2$ and it belongs to $\frH^1_{q_1,d,0}(G,T)$ as $G$ is bounded. 
Also, due to Lemma~\ref{collection}(v) and the uniqueness in   $\frH^1_{p_1,\Theta_1 \vee d,0}(G,T)$, we conclude $v_1=u$. Now let $v_3$ be the solution to the problem in $\frH^1_{p_2,\Theta_2.0}(G,T)$.  The same argument as above shows $v_3=v_2$. 
\end{proof}

\mysection{Proof of  Lemma \ref{lem 4.5.1}}\label{4}

In this section we prove the second key auxiliary result of this article, Lemma~\ref{lem 4.5.1}. 
Throughout, we take and fix a $\cont^\infty$ radial function $\eta$ and a corresponding $\cont^1$ domain $G\subseteq \cD$  as in the proof of Lemma~\ref{lem:estim:2DCone:U1}.  Recall that $\eta(t)=1$ for $1\leq t\leq 4$.  As a consequence, there exists a constant $c>0$ such that 
\begin{equation}\label{eqn 5.6.544}
\sum_{n=-\infty}^{\infty}\eta(e^{n+t})>c>0, \quad \forall \, t\in \bR.
\end{equation}

Our proof of Lemma~\ref{lem 4.5.1} relies on the the following characterization of the $L_{p,\theta}^{[\circ]}(\cD)$-norm.

\begin{lemma} 
\label{lem 3.1}
Let  $p>1$ and $\theta\in \bR$.  Let $u\colon\cD\to\bR$ be a measurable function. 
\begin{enumerate}[leftmargin=*,label=\textup{(\roman*)}, wide] 
\item\label{lem 3.1.1} If $\eta$ and $G$ are as above,  then
$$
\|u\|^p_{L^{[\circ]}_{p,\theta}(\cD)} \sim   \sum_{n \in \bZ} e^{n\theta} \|\eta(|x|) u(e^nx)\|^p_{L_p(\cD)}= \sum_{n \in \bZ} e^{n\theta} \|\eta(|x|) u(e^nx)\|^p_{L_p(G)}.
$$
\item\label{lem 3.1.2} For any  function $\xi  \in C^{\infty}_0((0,\infty))$ we have
$$
\sum_{n \in \bZ} e^{n\theta} \|\xi(|x|) u(e^nx)\|^p_{L_p(\cD)}\leq N(\xi, \eta, p,\theta) \|u\|^p_{L^{[\circ]}_{p,\theta}(\cD)}.
$$
\end{enumerate}
\end{lemma}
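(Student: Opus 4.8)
The key structural fact is that $\cD$ is a cone, hence invariant under dilations, so a change of variables converts each summand into a pure rescaling that can be reassembled into a weighted integral. First I would fix $n\in\bZ$ and substitute $y=e^nx$ in $\|\xi(|x|)u(e^nx)\|^p_{L_p(\cD)}$, where $\xi$ stands for either $\eta$ (part~(i)) or the test function (part~(ii)). Since $e^{-n}\cD=\cD$, and using $dx=e^{-2n}\,dy$, $|x|=e^{-n}|y|$ (recall $d=2$, which matches the weight exponent $\theta-2$), this gives
\[
e^{n\theta}\|\xi(|x|)u(e^nx)\|^p_{L_p(\cD)}=\int_\cD |u(y)|^p\,e^{n(\theta-2)}\,|\xi(e^{-n}|y|)|^p\,dy.
\]
Summing over $n\in\bZ$ and interchanging sum and integral (legitimate by Tonelli's theorem, since all integrands are non-negative) reduces both parts to understanding the multiplier $K_\xi(y):=\sum_{n\in\bZ}e^{n(\theta-2)}|\xi(e^{-n}|y|)|^p$.

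The second, and main, step is to show $K_\eta(y)\sim|y|^{\theta-2}$ for part~(i) and $K_\xi(y)\le N|y|^{\theta-2}$ for part~(ii). Writing $|y|=e^s$ and factoring out $e^{s(\theta-2)}$, I would rewrite $K_\xi(y)=e^{s(\theta-2)}\sum_{n\in\bZ}g_\xi(s-n)$, where $g_\xi(t):=e^{-t(\theta-2)}|\xi(e^t)|^p$. Because $\xi\in\cont^\infty_0((0,\infty))$, the function $g_\xi$ is continuous with compact support, so its $1$-periodization $\sum_n g_\xi(s-n)$ has, for each $s$, only finitely many nonzero terms and is bounded above uniformly in $s$; this yields the upper bound on $K_\xi$ and hence part~(ii) at once. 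For the lower bound required in part~(i), I would use the normalization $\eta\equiv1$ on $[1,4]$, so that $g_\eta(t)=e^{-t(\theta-2)}$ on the interval $[0,\log4]$ of length $\log4>1$. Consequently, for every $s$ at least one integer $n$ satisfies $s-n\in[0,\log4]$, and the corresponding term is bounded below by a positive constant depending only on $\theta$; this is precisely the overlap estimate~\eqref{eqn 5.6.544} applied with the weight $e^{-t(\theta-2)}$ absorbed into the constant.

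Finally, for the second equality in part~(i) I would observe that the integrand $\eta(|x|)u(e^nx)$ is supported in $\{x\in\cD:|x|\in\mathrm{supp}\,\eta\}$, and since $\mathrm{supp}\,\eta\subseteq[2^{-\varepsilon},2^{2+\varepsilon}]\subset(2^{-2\varepsilon},2^{2+2\varepsilon})=\{x:2^{-2\varepsilon}<|x|<2^{2+2\varepsilon}\}=U^2_1\subset G$, this support lies inside $G$; hence the $L_p(\cD)$- and $L_p(G)$-norms coincide term by term. Combining the two steps gives $\|u\|^p_{L^{[\circ]}_{p,\theta}(\cD)}=\int_\cD|u|^pK_\eta\,dy\sim\int_\cD|u|^p|y|^{\theta-2}\,dy$, which is part~(i), while the upper bound on $K_\xi$ gives part~(ii).

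I expect the only genuinely delicate point to be the two-sided control of the periodization $\sum_n g_\eta(s-n)$, and in particular its uniform positive lower bound, which is exactly where the normalization $\eta\equiv1$ on $[1,4]$ (equivalently~\eqref{eqn 5.6.544}) and the length condition $\log4>1$ are essential; the rest is a routine dilation change of variables together with Tonelli's theorem.
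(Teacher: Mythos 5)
Your proof is correct and follows essentially the same route as the paper: a dilation change of variables (using $e^{-n}\cD=\cD$) reduces both parts to a two-sided (resp.\ one-sided) comparison of the periodized weight $\sum_{n}e^{n(\theta-2)}\xi^p(e^{-n}|x|)$ with $|x|^{\theta-2}$, plus the support observation $\mathrm{supp}\,\eta\cap\cD\subset U_1^2\subset G$ for the second equality in (i). The only difference is that the paper outsources the weight comparisons to \cite[Remark~1.3 and Lemma~1.4]{Kry1999c}, whereas you prove them directly via the periodization of $g_\xi$ and the overlap property of $\eta$ on $[1,4]$ (with $\log 4>1$), which is a correct, self-contained filling-in of those cited facts.
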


\begin{proof}
To see~\ref{lem 3.1.1}, it is enough to repeat the proof of \cite[Remark~1.3]{Kry1999c}. Indeed, by the change of variables $e^nx \to x$,
$$
\sum_{n\in \bZ} e^{n\theta}\|\eta(|x|)u(e^nx)\|^p_{L_p(\cD)}=\int_{\cD} \zeta(x)|u(x)|^pdx,
$$
where
$$
\zeta(x)=\sum_{n\in \bZ} e^{n(\theta-2)}\eta^p(e^{-n}|x|) \sim |x|^{\theta-2},
$$
 see \cite[Remark~1.3]{Kry1999c}.  Moreover, since $\mathrm{supp}\,\eta\cap\cD\subset G$, the equality in~\ref{lem 3.1.1} is also satisfied.  Part~\ref{lem 3.1.2}
holds since
 $$
\sum_{n\in \bZ} e^{n(\theta-2)}\xi^p(e^{-n}|x|) \leq N(\xi,\eta,\theta,p) |x|^{\theta-2};
$$
see \cite[Lemma~1.4]{Kry1999c} for details.
\end{proof}

In addition to Lemma~\ref{lem 3.1}, we also need the following counterpart of   Lemma~\ref{lem 4.5.1} for the stochastic heat equation on bounded $\cont^1$ domains.
In the proof, we are going to use the common abbreviations
\[
\bH^n_p(T):=L_p(\Omega_T, \cP;H^n_p(\bR^d)) \quad\text{and}\quad
\bL_p(T;\ell_2):=L_p(\Omega_T,\cP;L_p(\bR^d;\ell_2)),
\]
for $n\in\bZ$.

\begin{lemma}
\label{lem krylov}
Let $G$ be a bounded $\cont^1$ domain, $\Theta\in \bR$,  $p\geq 2$, and $u\in \frH^{1}_{p,\Theta,0}(G,T)$ with $du=fdt +g\,dw^k_t$. Then $u\in L_p(\Omega; \cont([0,T]; L_{p,\Theta}(G))$, and for any $c>0$,

\begin{eqnarray*}
\E \sup_{t\leq T} \|u(t,\cdot)\|^p_{L_{p,\Theta}(G)}\le N \Big( c \|u\|^p_{\bH^{1}_{p,\Theta-p}(G,T)} 
 + c^{-1}\|f\|^p_{\bH^{-1}_{p,\Theta+p}(G,T)}+\|g\|^p_{\bL_{p,\Theta}(G,T;\ell_2)} \Big),
 \end{eqnarray*}
where $N= N(d,p,\theta,G,  T)$. In particular, if  $f=f^0+f^i_{x^i}$, then the right hand side above is bounded by a constant multiple of 
\[
 c \|u\|^p_{\bH^1_{p,\Theta-p}(G,T)}+c^{-1}\|f^0\|^p_{\bL_{p,\Theta+p}(G,T)}+c^{-1}\|f^i\|^p_{\bL_{p,\Theta}(G,T)}
 +\|g\|^p_{\bL_{p,\Theta}(G,T;\ell_2)}.
\]
 \end{lemma}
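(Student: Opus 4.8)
The plan is to derive both the continuity statement and the estimate from a single application of It\^o's formula to the functional $t\mapsto \|u(t,\cdot)\|^p_{L_{p,\Theta}(G)}=\int_G |u(t,x)|^p\rho^{\Theta-d}\,dx$, combined with the Burkholder--Davis--Gundy and Young inequalities. Since $u$ has only first-order Sobolev regularity and the deterministic part $\bD u$ is a distribution, I would first fix a decomposition $\bD u=f^0+f^i_{x^i}$ with $f^0\in\bL_{p,\Theta+p}(G,T)$, $f^i\in\bL_{p,\Theta}(G,T)$ (the weighted analogue of the characterization of $\bH^{-1}_{p,\Theta+p}$ recalled after Definition~\ref{defn sol}), and then run the standard mollification argument underlying the It\^o formula for $L_p$-norms in the analytic approach: replace $u$ by spatial regularizations, apply the finite-dimensional It\^o formula to $\int_G|u_n|^p\rho^{\Theta-d}\,dx$, and pass to the limit. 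The uniform-in-$t$ convergence produced by the estimate below simultaneously yields $u\in L_p(\Omega;\cont([0,T];L_{p,\Theta}(G)))$ and the bound in the ``in particular'' form; the $\bH^{-1}_{p,\Theta+p}$-form then follows by minimizing over decompositions, using that $\|f\|_{\bH^{-1}_{p,\Theta+p}}$ is comparable to $\inf\{\|f^0\|_{\bL_{p,\Theta+p}}+\sum_i\|f^i\|_{\bL_{p,\Theta}}\}$.

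For the regularizations the It\^o formula reads, schematically,
\[
\|u_n(t)\|^p_{L_{p,\Theta}(G)}=p\int_0^t\!\!\int_G|u_n|^{p-2}u_n\,\bD u_n\,\rho^{\Theta-d}\,dx\,ds+\tfrac{p(p-1)}{2}\int_0^t\!\!\int_G|u_n|^{p-2}|g_n|^2_{\ell_2}\rho^{\Theta-d}\,dx\,ds+M^n_t,
\]
where $M^n_t$ is the stochastic integral against $g_n$. The It\^o correction term is bounded by H\"older's inequality by $\|u_n\|^{p-2}_{L_{p,\Theta}}\|g_n\|^2_{L_{p,\Theta}(G;\ell_2)}$, while the quadratic variation of $M^n$ is estimated, via Cauchy--Schwarz in $\ell_2$ and H\"older, by $\sup_{s\le t}\|u_n(s)\|^{2(p-1)}_{L_{p,\Theta}}\int_0^t\|g_n\|^2_{L_{p,\Theta}(G;\ell_2)}\,ds$. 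After taking suprema in $t$ and expectations, Burkholder--Davis--Gundy together with Young's inequality absorbs a fixed fraction of $\E\sup_t\|u_n\|^p_{L_{p,\Theta}}$ into the left-hand side and leaves $\|g_n\|^p_{\bL_{p,\Theta}(G,T;\ell_2)}$; the passage from the time-integral of the square to its $p$-th power is where the dependence of $N$ on $T$ enters.

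The crux is the drift term $\int_G|u_n|^{p-2}u_n\,\bD u_n\,\rho^{\Theta-d}\,dx$. Integrating by parts in $\bD u_n=f^0_n+(f^i_n)_{x^i}$, the derivative falls either on $|u_n|^{p-2}u_n$, giving $\int|u_n|^{p-2}(u_n)_{x^i}f^i_n\rho^{\Theta-d}$, or on the weight $\rho^{\Theta-d}$, producing a factor $\rho^{-1}$ and the term $\int|u_n|^{p-1}|f^i_n|\rho^{\Theta-d-1}$. H\"older bounds the first by $\|u_n\|^{p-2}_{L_{p,\Theta}}\|(u_n)_x\|_{L_{p,\Theta}}\|f^i_n\|_{L_{p,\Theta}}$ and the second by $\|u_n\|^{p-2}_{L_{p,\Theta}}\|\rho^{-1}u_n\|_{L_{p,\Theta}}\|f^i_n\|_{L_{p,\Theta}}$, and the point is that both $\|(u_n)_x\|_{L_{p,\Theta}(G)}$ and $\|\rho^{-1}u_n\|_{L_{p,\Theta}(G)}$ are controlled by $N\|u_n\|_{H^1_{p,\Theta-p}(G)}$ through the mapping properties of $\psi$ in Lemma~\ref{collection}\ref{col:psiD} and Hardy's inequality. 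A final application of Young's inequality with parameter $c$ then yields precisely $c\|u_n\|^p_{\bH^1_{p,\Theta-p}}+c^{-1}\big(\|f^0_n\|^p_{\bL_{p,\Theta+p}}+\sum_i\|f^i_n\|^p_{\bL_{p,\Theta}}\big)$, and the contribution of $f^0_n$ is handled analogously (without the weight-derivative subtlety).

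The main obstacle I anticipate is the rigorous justification of the It\^o formula in this weighted, bounded-domain, infinite-dimensional setting for a process of only first-order Sobolev regularity with distributional drift; the regularization has to be arranged so that the weight-derivative term $\rho^{\Theta-d-1}$ remains under control (this is exactly where Hardy's inequality is used and where the absence of any restriction on $\Theta\in\bR$ is consistent), and so that the limit retains pathwise continuity. Should the direct It\^o computation prove too cumbersome, an alternative is to localize by a partition of unity subordinate to the $\cont^1$ boundary charts, reduce interior pieces to $\bR^d$ via Lemma~\ref{collection}\ref{col:cptsupport} and boundary pieces to $\bR^d_+$ via the flattening equivalence~\eqref{eqn 12.7.1} (under which the weight survives), and invoke the corresponding model supremum estimate on the half space, into which the It\^o-formula content is then absorbed.
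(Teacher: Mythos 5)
Your closing alternative---localize with a partition of unity subordinate to the $\cont^1$ charts, reduce the interior piece to $\bR^d$ via Lemma~\ref{collection}\ref{col:cptsupport} and the boundary pieces to $\bR^d_+$ via the flattening equivalence~\eqref{eqn 12.7.1}, and invoke the model supremum estimates on the whole space and the half space---is exactly the paper's proof; the two imported inputs are \cite[Corollary~4.12 and Remark~4.14]{Kry2001} for the interior piece and \cite[Theorem~4.1 and Remark~4.5]{Kry2001} for the flattened boundary pieces. Had you committed to that route, your argument would be essentially the paper's.

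Your primary plan, however, has a genuine gap, and it sits precisely where you yourself flag ``the main obstacle'': the It\^o formula for $t\mapsto\int_G|u(t)|^p\rho^{\Theta-d}\,dx$ for a process that is merely in $\frH^1_{p,\Theta,0}(G,T)$ with distributional drift is not a routine mollification exercise---it is the entire content of the lemma. Concretely: (a) on a bounded domain a spatial regularization $u_n$ neither satisfies a tractable equation near $\partial G$ nor vanishes there, so the integration by parts that moves $\partial_{x^i}$ off $f^i_n$ produces boundary contributions weighted by $\rho^{\Theta-d}$, which for $\Theta<d$ is singular at $\partial G$; killing them requires quantitative boundary decay that mollification does not supply. (b) Differentiating the weight gives $\rho^{\Theta-d-1}\rho_{x^i}$, and $\rho$ is only Lipschitz on a $\cont^1$ domain, so one must first pass to the regularized $\psi$ and control the error. (c) Most importantly, the known proof of exactly this supremum estimate for an arbitrary weight power (Krylov's Theorem~4.1 in \cite{Kry2001}) does not proceed by a single global weighted It\^o formula; it is a substantial result in its own right, which is why the paper imports it as a black box rather than rederiving it. Your H\"older/Young bookkeeping for the drift and martingale terms is fine as far as it goes (and you correctly locate where the $T$-dependence enters), but the sentence ``apply the finite-dimensional It\^o formula and pass to the limit'' conceals the actual work. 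Either carry out your plan B, or supply a genuine proof of the weighted It\^o formula for such processes, which would itself require a careful, nontrivial argument.
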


\begin{proof}
Introduce a partition of unity  $\zeta_0, \zeta_1,\zeta_2,\cdots, \zeta_M$
 of $G$ such that  $\zeta_0\in \cont^{\infty}_c(G)$ and
 $\zeta_j\in \cont^{\infty}_c(B_{r}(x_j))$ $(j=1,2,\cdots,M)$, where
  $x_j\in \partial G$ and $r<r_0/K_0$.  
For any $\Theta\in \bR$, $m\in \bZ$, and  $v\in H^m_{p,\Theta}(G)$, since $\zeta_0$ has compact support in $G$, we can consider $\zeta_0v$ as a function defined on the entire space, so that by Lemma~\ref{collection}\ref{col:cptsupport},
\begin{equation}
         \label{eqn 2018-12}
\|\zeta_0 v\|_{H^m_{p,\Theta}(G)} \sim \|\zeta_0v\|_{H^m_p(\bR^d)}.
\end{equation}
Also, by~\eqref{eqn 12.7.1}, for $j\geq 1$ and  any $\Theta, \nu\in \bR$,
\begin{equation}
\label{relation}
\|\psi^{\nu} \zeta_j v\|^p_{H^{m}_{p,\Theta}(G)} \sim \|(x^1)^{\nu}(\zeta_j v)(\Psi^{-1}_j)\|^p_{H^{m}_{p,\Theta}(\bR^d_+)},
\end{equation}
where $\Psi_j$ is the corresponding mapping from Definition~\ref{definition domain} related to $x_j\in \partial G$.
Thus
\begin{equation}\label{eqv} 
\begin{alignedat}{1}
\|v\|^p_{H^{m}_{p,\Theta}(G)}
&= \sgnnrm{\sum_{j=0}^M \zeta_j v}{{H^{m}_{p,\Theta}(G)}}^p
\sim \sum_{j=0}^M \|\zeta_j v\|^p_{H^{m}_{p,\Theta}(G)}\\
& \sim
\|\zeta_0 v\|^p_{H^{m}_p(\bR^d)} + \sum_{j= 1}^M \|(\zeta_j v)(\Psi^{-1})\|^p_{H^{m}_{p,\Theta}(\bR^d_+)}.  
\end{alignedat}
\end{equation}
As a consequence, 
\begin{equation}
  \nonumber \label{eqn 2018-15}
\E \sup_{t\leq T} \|u(t,\cdot)\|^p_{L_{p,\Theta}(G)} 
\leq N 
\ssgrklam{\E\sup_{t\leq T} \|\zeta_0u\|^p_{L_p(\bR^d)} +\sum_{j=1}^M \E \sup_{t\leq T} \|(\zeta_ju)(\Psi^{-1}_j)\|^p_{L_{p,\Theta}(\bR^d_+)}}.
\end{equation}
Therefore, in order to obtain the desired estimate and the continuity assertion, it is enough to estimate the terms on the right hand side appropriately and to prove continuity of $\zeta_ju$, $j=0,\ldots,M$. 
For the first term, note that 
$$
d(\zeta_0 u)=\zeta_0f \,dt+\zeta_0g\,dw^k_t, \quad  \,t\in (0,T],
$$
on $\bR^d$ and $\zeta_0u\in \bH^1_p(T)$ due to~\eqref{eqn 2018-12}.
Therefore, by \cite[Corollary~4.12]{Kry2001} (for $p>2$) and \cite[Remark~4.14]{Kry2001} (for $p=2$), 
\begin{equation}\label{eq:cts:zeta0}
\zeta_0 u \in L_p(\Omega; \cont([0,T]; L_p(\bR^d)),
\end{equation}
and there exists a constant $N$, such that, for any $c>0$,
\begin{align*}
\E \sup_{t\leq T} \|\zeta_0u\|^p_{L_p}
&\leq N c \|\zeta_0u\|^p_{\bH^1_p(T)}+
Nc^{-1}\|\zeta_0 f\|^p_{\bH^{-1}_p(T)}+N \|\zeta_0g\|^p_{\bL_p(T;\ell_2)}\\
&\leq Nc \|\zeta_0u\|^p_{\bH^1_{p,\Theta-p}(G,T)}+Nc^{-1}\|\zeta_0f\|^p_{\bH^{-1}_{p,\Theta}(G,T)}
+N\|\zeta_0g\|^p_{\bL_{p,\Theta}(G,T;\ell_2)}.
\end{align*}
Moreover, for every $j\in\{1,\ldots,M\}$,
\[
d((\zeta_j u)(\Psi^{-1}_j))
=
(\zeta_j f)(\Psi^{-1}_j)\,dt+(\zeta_jg^k)(\Psi^{-1}_j) \,dw^k_t =:F_j \,dt+G^k_j \,dw^k_t, \quad t\in (0,T],
\]
on $\bR^d_+$ and due to~\eqref{relation}, $(\zeta_ju)(\Psi^{-1}_j) \in \frH^1_{p,\Theta}(\bR^d_+,T)$ (see  Section~\ref{sec:proof:lift}  for  notation). Therefore,
by \cite[Theorem~4.1]{Kry2001} (for $p>2$) and \cite[Remark~4.5]{Kry2001} (for $p=2$),
\begin{equation}\label{eq:cts:zetaj}
(\zeta_ju)(\Psi^{-1}_j)\in L_p(\Omega; \cont([0,T]; L_{p,\Theta}(\bR^d_+)),
\end{equation}
and
\begin{align*}
\E &\sup_{t\leq T} \|(\zeta_ju)(\Psi^{-1}_j)\|^p_{L_{p,\Theta}(\bR^d_+)}\\
&\leq 
N c \|(\zeta_ju)(\Psi^{-1}_j)\|^p_{\bH^1_{p,\Theta-p}(\bR^d_+,T)}
+Nc^{-1}\|F_j\|^p_{\bH^{-1}_{p,\Theta+p}(\bR^d_+,T)}+N\|G_j\|^p_{\bL_{p,\Theta}(\bR^d_+,T;\ell_2)}\\
&\leq N c \|\zeta_ju\|^p_{\bH^1_{p,\Theta-p}(G,T)}
+Nc^{-1}\|\zeta_jf\|^p_{\bH^{-1}_{p,\Theta+p}(G,T)}+
N\|\zeta_jg\|^p_{\bL_{p,\Theta}(G,T;\ell_2)}.
\end{align*}
Summing up gives the desired estimate and $u\in L_p(\Omega;\cont([0,T];L_{p,\Theta}(G)))$ follows from~\eqref{eq:cts:zeta0} and \eqref{eq:cts:zetaj}, together with~\eqref{eqv}.
The second assertion is due to the fact that, by Lemma~\ref{collection}, 
\[
\|f^i_{x^i}\|_{H^{-1}_{p,\Theta+p}(G)}\leq N \|\psi f^i_{x^i}\|_{H^{-1}_{p,\Theta}(G)}\leq N \|f^i\|_{L_{p,\Theta}(G)}.\qedhere
\]
\end{proof}

We have now all ingredients we need in order to prove Lemma~\ref{lem 4.5.1}.

\begin{proof}[Proof of  Lemma~\ref{lem 4.5.1}]
We first prove Estimate~\eqref{eq:estim:sup:2DCone}.  By Lemma~\ref{lem 3.1},
\begin{equation}\label{eqn 2018-4}
\E \sup_{t\leq T} \|u(t,\cdot)\|^p_{L^{[\circ]}_{p,\theta}(\cD)} 
\leq 
N \sum_{n\in \bZ} e^{n\theta} \,\E \sup_{t\leq T} \|u(t,e^nx) \eta(x)\|^p_{L_p(G)}.
\end{equation}
 For $n\in\bZ$, let  $v_n(t,x):=u(t,e^nx)\eta(x)$. Then 
$$
dv_n=[ e^{-n} (f^i(t,e^nx))_{x^i}\eta(x)+f^0(t,e^nx)\eta(x)]dt+g^k(t,e^nx)\eta(x)dw^k_t, \quad t\in(0,T],
$$
on $G$.
Note that
$$
 e^{-n} (f^i(t,e^nx))_{x^i}\eta(x)=  e^{-n} [f^i(t,e^nx)\eta(x)]_{x^i}-e^{-n}f^i(t,e^nx)\eta_{x^i}(x),
$$
and
\begin{equation}\label{eq_sec4_2}
(v_n)_{x^i}=e^{n}u_{x^i}(t,e^nx)\eta(x)-u(t,e^nx)\eta_{x^i}(x).
\end{equation}
Obviously, $L_{p,2}(G)=L_p(G)$  and  by Hardy's inequality,
\begin{equation}\label{eq_sec4_1}
\|v_n\|_{H^1_{p,2-p}(G)}
\le 
N\sgrklam{\|\rho^{-1}_G v_n\|_{L_p(G)}+\sum_i\|(v_{n})_{x^i}\|_{L_p(G)} }
\leq 
N \|(v_{n})_{x}\|_{L_p(G)}.
\end{equation}
By Lemma~\ref{lem krylov} with $\Theta=d=2$, \eqref{eq_sec4_1}, and \eqref{eq_sec4_2}, for any $c>0$
\begin{align*}
&\E \sup_{t\leq T} \|v_n(t,\cdot)\|^p_{L_p(G)} \\
&\leq N \Big(ce^{np}\sum_i\|u_{x^i}(\cdot,e^n\cdot)\eta\|^p_{\bL_{p,d}(G,T)}+
c\sum_i\|u(\cdot,e^n\cdot)\eta_{x^i}\|^p_{\bL_{p,d}(G,T)} \\
\\&\quad \quad +e^{-np}c^{-1}\|f^i(\cdot,e^n\cdot)\eta\|^p_{\bL_{p,d}(G,T)}
+e^{-np}c^{-1}\sum_i\| \rho f^i(\cdot,e^n\cdot)\eta_{x^i}\|^p_{\bL_{p,d}(G,T)}\\
&\quad \quad +c^{-1}\|\rho f^0(\cdot,e^n\cdot)\eta\|^p_{\bL_{p,d}(G,T)} +\|\eta g(\cdot,e^n\cdot)\|^p_{\bL_{p,d}(G,T;\ell_2)} \Big).
\end{align*}
Since $\rho$ is bounded in $G$, we can drop $\rho$ above, so that, if we choose
$c:=e^{-np}$ and use~\eqref{eqn 2018-4}, we get
\begin{align*}
\E &\sup_{t\leq T} \|u(t,\cdot)\|^p_{L^{[\circ]}_{p,\theta}(\cD)} \\
&\leq N 
\ssgrklam{\sum_n e^{n\theta}\|u_x(\cdot,e^n\cdot)\eta\|^p_{\bL_{p,d}(\cD,T)}
+ \sum_n e^{n(\theta-p)}\sum_i\|u(\cdot,e^n\cdot)\eta_{x^i}\|^p_{\bL_{p,d}(\cD,T)}\\
&\quad\quad\quad+ \sum_{n,i} e^{n\theta}\|f^i(\cdot,e^n\cdot)\eta\|^p_{\bL_{p,d}(\cD,T)}
+ \sum_{n,i} e^{n\theta} \|f^i(\cdot,e^n\cdot)\eta_{x^i}\|^p_{\bL_p(\cD,T)}\\
&\quad\quad\quad+  \sum_n e^{n(\theta+p)}\|f^0(\cdot,e^n\cdot)\eta\|^p_{\bL_{p,d}(\cD,T)}
+ N \sum_n e^{n\theta}\|g(\cdot,e^n\cdot)\eta\|^p_{\bL_{p,d}(\cD,T;\ell_2)}}.
\end{align*}
Therefore,  due to Lemma~\ref{lem 3.1}\ref{lem 3.1.2}, Estimate~\eqref{eq:estim:sup:2DCone} holds.

To prove the continuity assertion, we take a sequence of smooth functions $\xi_n\in \cont^{\infty}_c(\bR^2)$ such that $\xi_n=1$ if $3/n<|x|<n$, $\xi_n(x)=0$ if $|x|\leq 2/n$ or $|x|\geq 2n$, and 
\begin{equation}
   \label{eqn 4.9.9}
\sup_i\sup_n \sup_x |x|\, |(\xi_n)_{x^i}| < \infty.
\end{equation}
Moreover, for every $n\in\bN$, let $G_n\subset\cD$ be a bounded $\cont^1$ domain such that  
$$
\cD \cap \{x: 2/n <|x|<2n\} \subset G_n \subset \cD \cap \{x: 1/n <|x|<3n\}.
$$
Then
$$
d(\xi_n u)=\left[ (\xi_n f^i)_{x^i}-(\xi_n)_{x^i} f^i+\xi_nf^0\right]dt+\xi_n g^kdw^k_t, \quad t\in (0,T],
$$
on $G_n$ and,  by the choice of $\xi_n$, we also know that $\xi_n u \in \frH^1_{p,2}(G_n,T)$ since $\xi_n u\in\cK^1_{p,\theta,0}(\cD,T)\subseteq \mathring{\wso}^1_{p,\theta-p}(\cD,T)$, see also~\cite[Remark~3.2]{CioKimLee+2018}.  By Lemma~\ref{lem krylov} with $\Theta=2$, $\xi_n u \in L_p(\Omega; \cont([0,T]; L_p(G_n))$.
  Since $\xi_n$ vanishes near the origin and toward infinity, we conclude  
$$
\xi_n u \in L_p(\Omega; \cont([0,T]; L_{p,\Theta}(\cD)).
$$
Applying~\eqref{eq:estim:sup:2DCone} to $\xi_n u -\xi_m u$ and using~\eqref{eqn 4.9.9}, we find that $\xi_n u$ is a Cauchy sequence in  
$L_p(\Omega; \cont([0,T]; L_{p,\Theta}(\cD))$, which converges in this space to a limit $v$. Moreover, applying~\eqref{eq:estim:sup:2DCone} to $\xi_n u-u$,  we find that $u=v$ in $L_p(\Omega; L_{\infty}([0,T]; L_{p,\Theta}(\cD))$. Therefore, $u(t)=v(t)$ for all $t\leq T$ ($\prob$-a.s.).  Thus $u$ has the desired version with continuous paths. 
\end{proof}

\mysection{The stochastic heat equation on polygons}\label{sec:Polygons}

 In this section we present our analysis for the stochastic heat equation~\eqref{eq:SHE:Intro} on polygons in $\bR^2$. We fix the following setting: Throughout, 
let $\cO \subset \bR^2$ be a polygon with vertexes $\{v_1,v_2, \ldots, v_M\}\subset\bR^2$.  For $x\in \cO$, put
$$
\tilde{\rho}(x):=\min_{1\leq j\leq M} |x-v_j|, \quad  \quad \rho(x):=\rho_\domain(x):=\text{dist}(x,\partial \cO),
$$
and for $j=1,\ldots,M$ define
$$
\kappa_j:=\text{interior angle at}\, v_j, \quad \quad \kappa_0:=\max_{1\leq j\leq M}\kappa_j.
$$

Motivated by the analysis of the stochastic heat equation on angular domains from Section~\ref{sec:2DCone}, we are going to use weighted Sobolev spaces with weights based on the distance $\tilde{\rho}$ to the set of vertexes in order to establish existence and uniqueness of a solution. More precisely,  for $\theta\in \bR$, $p>1$ and $n\in \{0,1,2,\ldots\}$,  we define the spaces  $K^n_{p,\theta}(\cO)$, $K^n_{p,\theta}(\cO;\ell_2)$,   $L^{[\circ]}_{p,\theta}(\cO)$, and 
$L^{[\circ]}_{p,\theta}(\cO;\ell_2)$  in the same way as the corresponding spaces on $\cD$ from Section~\ref{sec:2DCone} with $\rho_\circ$ replaced by $\tilde{\rho}$, i.e., for instance,
$$
\|u\|^p_{K^n_{p,\theta}(\cO)}=\sum_{|\alpha|\leq n} \int_{\cO} |\tilde{\rho}^{|\alpha|} D^{\alpha}u|^p \tilde{\rho}^{\theta-2}dx.$$
The space $\mathring{\wso}^1_{p,\theta}(\cO)$ is  the closure of the space $\cont^\infty_c(\cO)$ of test functions in $\wso^1_{p,\theta}(\cO)$. 
In analogy to Section~\ref{sec:2DCone}, for the $L_p$-spaces of predictable stochastic processes with values in the weighted Sobolev spaces introduced above we use the  abbreviations 
\[
\bwso^{n}_{p,\theta}(\cO,T)
:=
L_p(\Omega_T, \pred;\wso^{n}_{p,\theta}(\cO)),
\qquad \bwso^{n}_{p,\theta}(\cO,T;\ell_2)
:=
L_p(\Omega_T, \pred;\wso^{n}_{p,\theta}(\cO;\ell_2)),
\]
\[
\bL^{[\circ]}_{p,\theta}(\cO,T)
:=\bwso^0_{p,\theta}(\cO,T), \quad\quad\quad  
\bL^{[\circ]}_{p,\theta}(\cO,T,\ell_2)
:=\bwso^0_{p,\theta}(\cO,T;\ell_2),
\]
and
\[
\mathring{\bwso}^1_{p,\theta}(\cO,T)
:=
L_p(\Omega_T, \pred ;\mathring{\wso}^{1}_{p,\theta}(\cO)).
\]
Moreover, $\cK^1_{p,\theta,0}(\domain,T)$ is defined the following way.
\begin{defn}
Let $p \geq 2$. We write $u\in\cK^1_{p,\theta,0}(\cO,T)$  if
$u\in\mathring{\bwso}^1_{p,\theta-p}(\cO,T)$
 and
there exist $f^0\in \bL^{[\circ]}_{p,\theta+p}(\cO,T)$, $f^i\in  \bL^{[\circ]}_{p,\theta}(\cO,T)$, $i=1,2$, and
 $g\in \bL^{[\circ]}_{p,\theta}(\cO,T;\ell_2)$
such that 
\begin{equation*}\label{eqn 28_2}
 du=(f^0+f^i_{x^i})\, dt +g^k \, dw^k_t,\quad t\in(0,T],
\end{equation*}
on $\domain$ in the sense of distributions with $u(0,\cdot)=0$; see Definition~\ref{defn sol} accordingly. In this situation
we also write
$$
\bD u:=f^0+f^i_{x^i}\qquad\text{and}\qquad \bS u :=g.
$$
\end{defn}

In this article, Equation~\eqref{eq:SHE:Intro} has the following meaning on $\domain$.

\begin{defn}
We say that  $u$ is a solution to Equation~\eqref{eq:SHE:Intro} on $\cO$ in the class $\mathcal{K}^{1}_{p,\theta,0}(\cO,T)$ 
if
$u\in \mathcal{K}^{1}_{p,\theta,0}(\cO,T)$ with
\[
\bD u = \Delta u + f^0+f^i_{x^i}=f^0+(f^i+u_{x^i})_{x^i}
\qquad
\text{and}
\qquad
\bS u = g.
\]
\end{defn}

Before we look at Equation~\eqref{eq:SHE:Intro} in detail, we first prove the following version of Lemma~\ref{lem 4.5.1} for polygons.
It is a key ingredient in our existence and uniqueness proof below.

\begin{lemma}
\label{lem for gronwall}
Let $p\geq 2$ and $\theta \in \bR$. Assume that $u\in\cK^1_{p,\theta,0}(\cO,T)$, such that $du=(f^0+f^i_{x^i})dt+g^kdw^k_t$ with
\[
f^0\in \bL^{[\circ]}_{p,\theta+p}(\cO,T), 
\quad f^i \in \bL^{[\circ]}_{p,\theta}(\cO,T),\,i=1,2, \quad\text{and } g\in \bL^{[\circ]}_{p,\theta}(\cO,T;\ell_2).
\]
 Then $u\in L_p(\Omega; \cont([0,T];L^{[\circ]}_{p,\theta}(\cO)))$ and 
 \begin{align*}
\E \sup_{t\leq T}& \|u(t,\cdot)\|^p_{L^{[\circ]}_{p,\theta}(\cO)} \\
&\leq N \Big(\|u\|^p_{\bwso^{1}_{p,\theta-p}(\cO,T)}
+\|f^0\|^p_{\bL^{[\circ]}_{p,\theta+p}(\cO, T)}+\sum_i\|f^i\|^p_{\bL^{[\circ]}_{p,\theta}(\cO, T)}+\|g\|^p_{\bL^{[\circ]}_{p,\theta}(\cO, T; \ell_2)} \Big)\\
&=:N \,C(u,f^0,f^i,g, T),
 \end{align*}
where $N=N(d,p,\theta,T)$ is a non-decreasing function of $T$.
In particular, for any $t\leq T$,
\[
\|u\|^p_{\bL^{[\circ]}_{p,\theta}(\cO,t)}
\leq 
\int^t_0 \E\sup_{r\leq s} \|u(r)\|^p_{L^{[\circ]}_{p,\theta}(\cD)} ds\leq  N(d,p, \theta,  T) \int^t_0 C(u,f^0,f^i,g, s) \,ds.
 \]
 \end{lemma}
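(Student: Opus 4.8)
The plan is to localize the problem to finitely many pieces, each covered either by the angular-domain estimate of Lemma~\ref{lem 4.5.1} (near a vertex) or by the $\cont^1$-domain estimate of Lemma~\ref{lem krylov} (away from the vertexes), and then to reassemble. Concretely, I would fix a partition of unity $\{\xi_0,\xi_1,\dots,\xi_M\}\subset\cont^\infty_c(\bR^2)$ with $\sum_{j=0}^M\xi_j=1$ on $\overline\cO$ such that, for each $j\in\{1,\dots,M\}$, the support of $\xi_j$ is a ball around $v_j$ so small that $\cO$ coincides on $\mathrm{supp}\,\xi_j$ with a single angular domain $\tilde\cD_j:=\tilde\cD_{\kappa_j}(v_j,a_j)$ (in the notation of Remark~\ref{remark cones}), with $\tilde\rho(x)=|x-v_j|$ equal there to the distance-to-vertex weight of $\tilde\cD_j$ and $\rho_\cO=\rho_{\tilde\cD_j}$; while $\xi_0$ is supported away from all vertexes, where $\partial\cO$ is a union of straight segments and $\tilde\rho\sim 1$. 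Using $u=\sum_{j}\xi_j u$ and the product rule, each piece solves an equation of the same type,
\[
d(\xi_j u)=\big(\tilde f^0_j+(\tilde f^i_j)_{x^i}\big)\,dt+\xi_j g^k\,dw^k_t,\qquad \tilde f^0_j:=\xi_j f^0-(\xi_j)_{x^i}f^i,\quad \tilde f^i_j:=\xi_j f^i,
\]
in the sense of distributions on the respective local domain.

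For $j\geq 1$ I would apply Lemma~\ref{lem 4.5.1} (in the translated and rotated form justified in Remark~\ref{remark cones}) to $\xi_j u\in\cK^1_{p,\theta,0}(\tilde\cD_j,T)$; membership in this class and validity of the equation on $\tilde\cD_j$ hold because $\xi_j u$ vanishes near the part of $\partial\tilde\cD_j$ that differs from $\partial\cO$, exactly as in the localization step in the proof of Lemma~\ref{lem 4.5.1}. Since $\tilde\rho$ is the $\circ$-weight of $\tilde\cD_j$ on $\mathrm{supp}\,\xi_j$, the weighted norms over $\tilde\cD_j$ and over $\cO$ agree there, and the multiplier estimates (Lemma~\ref{lem 1}, together with boundedness of $\xi_j$ and of $\tilde\rho^{\,p}|(\xi_j)_{x^i}|^p$) control $\|\xi_j u\|_{\bwso^1_{p,\theta-p}(\tilde\cD_j,T)}$, $\|\tilde f^0_j\|_{\bL^{[\circ]}_{p,\theta+p}}$, $\|\tilde f^i_j\|_{\bL^{[\circ]}_{p,\theta}}$ and $\|\xi_j g\|_{\bL^{[\circ]}_{p,\theta}(\,\cdot\,;\ell_2)}$ by the full norms entering $C(u,f^0,f^i,g,T)$. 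For $j=0$ I would choose a bounded $\cont^1$ domain $G_0\subset\cO$ containing $\mathrm{supp}\,\xi_0\cap\cO$ whose boundary agrees with $\partial\cO$ on $\mathrm{supp}\,\xi_0$, note that $\xi_0 u\in\frH^1_{p,2,0}(G_0,T)$ (by Hardy's inequality, as in the proof of Lemma~\ref{lem:estim:2DCone:U1}), and apply Lemma~\ref{lem krylov} with $\Theta=d=2$; since $\tilde\rho\sim 1$ there, $L^{[\circ]}_{p,\theta}$ reduces to $L_p=L_{p,2}$ up to constants, and the term $\|(\tilde f^i_0)_{x^i}\|_{H^{-1}}$ is absorbed into $\|\tilde f^i_0\|_{L_{p,2}}$ precisely as in the last display of the proof of Lemma~\ref{lem krylov}. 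Each of these lemmas also gives continuity of the paths of $\xi_j u$ in the corresponding $L_p$-space, so $u=\sum_j\xi_j u\in L_p(\Omega;\cont([0,T];L^{[\circ]}_{p,\theta}(\cO)))$, and summing the $M+1$ estimates (using $\E\sup_{t\le T}\|u\|^p\le N\sum_j\E\sup_{t\le T}\|\xi_j u\|^p$ and the norm-equivalence coming from the partition of unity) yields the sup-estimate with $N=N(d,p,\theta,T)$.

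The two displayed inequalities are then bookkeeping: the first is immediate since $\|u\|^p_{\bL^{[\circ]}_{p,\theta}(\cO,t)}=\E\int_0^t\|u(s)\|^p_{L^{[\circ]}_{p,\theta}(\cO)}\,ds\le\int_0^t\E\sup_{r\le s}\|u(r)\|^p_{L^{[\circ]}_{p,\theta}(\cO)}\,ds$, and the second follows by applying the sup-estimate on the shortened horizon $[0,s]$ under the integral and bounding $N(d,p,\theta,s)$ by $N(d,p,\theta,T)$ via monotonicity of the constant in $T$. I expect the main obstacle to be the careful setup of the localization: checking that near each $v_j$ the domain is genuinely the angular domain $\tilde\cD_j$ on which Lemma~\ref{lem 4.5.1} applies, that the cut-offs preserve the zero-boundary/function-class membership, and that the two weight systems ($\tilde\rho$ near the vertexes, a constant weight near the smooth boundary) glue so that every localized norm is dominated by $C(u,f^0,f^i,g,T)$. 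Recording that the resulting constant is non-decreasing in $T$, as needed for the final integration, is routine since each invoked estimate has this property.
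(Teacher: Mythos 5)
Your proposal is correct and follows essentially the same route as the paper: the same vertex-centered cut-offs with $\cO$ agreeing locally with an angular domain $\cD_j$, the same rewriting $\xi_j(f^0+f^i_{x^i})=\xi_jf^0-(\xi_j)_{x^i}f^i+(\xi_jf^i)_{x^i}$, Lemma~\ref{lem 4.5.1} (via Remark~\ref{remark cones}) near the vertexes, Lemma~\ref{lem krylov} with $\Theta=d=2$ on a $\cont^1$ subdomain away from them, and the same norm-equivalence bookkeeping (including the control of $\tilde\rho\,(\xi_j)_{x^i}f^i$) before summing. The concluding Gronwall-ready displays are handled identically.
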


\begin{proof}
We combine Lemma \ref {lem 4.5.1} (see also Remark~\ref{remark cones}) and Lemma~\ref{lem krylov} as follows.

Fix a sufficiently small $r>0$ such that  $B_{3r}(v_j)$ contains only one vertex $v_j$  and intersects with only two edges   for each $j\leq M$. 
Choose a function $\xi\in \cont^{\infty}_c(\bR^2)$ such that $0<\xi(x)\leq 1$ for $|x|<2r$, $\xi(x)=1$ for $|x|<r$, and $\xi(x)=0$ for $|x|\geq 2r$. 
Let $\xi_j(x):=\xi(x-v_j)$ and put $\xi_0(x):=1-\sum_{j=1}^M \xi_j(x)$, $x\in\domain$.  
Note that by the choice of $r$ and $\xi$, the supports of the $\xi_j$'s ($j\geq 1$) do not overlap, and therefore $0\leq \sum_{j=1}^M \xi^j \leq 1$. Moreover, $\xi_0(x)=1$ if  $x$ is not close to vertexes, that is,  if $\tilde{\rho}(x)=\min_{1\leq j\leq M} |x-v_j|\geq r$.
 For $j=1,2,\ldots,M$, let $\cD_j$ be the angular domain centered at $v_j$ with interior angle $\kappa_j$ such that 
 \begin{equation}
    \label{eqn 4.10.10}
 \cD_j \cap B_{3r}(v_j)=\cO \cap B_{3r}(v_j).
 \end{equation}
 Moreover, let $G$ be a $\cont^1$ domain in $\cO$ such that 
 \begin{equation}
     \label{eqn 4.10.12}
 \xi_0(x)=0 \quad \text{for }\, x\in \cO\setminus G \quad \text{and}\quad \inf_{x\in G} \tilde{\rho}(x)>c>0.
 \end{equation}
 Note that by the choice of $\xi_j$, $\cD_j$, $j=0,1,\ldots,M$, and $G$, for any $\theta\in\bR$ \icp  and $v\in \mathring{K}\icp^1_{p,\theta-p}(\cO)$, 
\begin{equation}
 \label{eqn 4.11.1}
\|\xi_0 v\|_{L^{[\circ]}_{p,\theta}(\cO)} 
\sim 
\|\xi_0 v\|_{L_p(G)}, \quad \quad   
\|\xi_0v\|_{K^1_{p,\theta-p}(\cO)}\sim \|\xi_0v\|_{H^1_{p,2-p}(G)},
\end{equation}
\begin{equation}
 \label{eqn 4.11.2}
\|\xi_j v\|_{K^1_{p,\theta-p}(\cO)}=\|\xi_j v\|_{K^1_{p,\theta-p}(\cD_j)} \quad (j\geq 1).
 \end{equation}
The first and the third relation are trivial  and hold actually for arbitrary measurable $v\colon\domain\to \bR$, provided the expressions make sense. The second one is due  to \eqref{eqn 4.10.12} and  Hardy's inequality as
 \begin{align*}
\|\xi_0v\|_{K^1_{p,\theta-p}(\cO)}
&\leq 
N (\|\xi_0v\|_{L_p(G)}+\sum_i\|(\xi_0v)_{x^i}\|_{L_p(G)}) \\
& \leq  N \|\xi_0 v\|_{H^1_{p,2-p}(G)}\leq N \sum_i\|(\xi_0v)_{x^i}\|_{L_p(G)} 
\leq N \|\xi_0 v\|_{K^1_{p,\theta-p}(\cO)}.
 \end{align*}
 The three relations from~\eqref{eqn 4.11.1} and~\eqref{eqn 4.11.2} together imply, in particular, that 
\[
\|v\|_{L^{[\circ]}_{p,\theta}(\cO)}
\sim 
\sum_{j=0}^M \|\xi_j v\|_{L^{[\circ]}_{p,\theta}(\cO)}   
\sim 
\|\xi_0v\|_{L_p(G)}+ \sum_{j=1}^M \|\xi_j v\|_{L^{[\circ]}_{p,\theta}(\cD_j)},
\]
\begin{eqnarray}
 \label{eqn 4.11.4}
\|v\|_{K^1_{p,\theta-p}(\cO)} 
\sim 
\sum_{j=0}^M \|\xi_j v\|_{K^1_{p,\theta-p}(\cO)}
\sim
\|\xi_0v\|_{H^1_{p,2-p}(G)}
+
\sum_{j=1}^M \|\xi_j v\|_{K^1_{p,\theta-p}(\cD_j)}.
\end{eqnarray}
Also note that for  any multi-index $\alpha$,
\begin{equation}
   \label{eqn 4.10.14}
\sum_{j=0}^M \|v D^{\alpha}\xi_j\|_{L^{[\circ]}_{p,\theta}(\cO)}
+  
\sum_{j=0}^M \|v \tilde{\rho}D^{\alpha}\xi_j\|_{L^{[\circ]}_{p,\theta}(\cO)}
\leq N 
\|v\|_{L^{[\circ]}_{p,\theta}(\cO)}.
\end{equation}

Using the preparations above, we can verify the assertion the following way. 
For each $j\in\{1,2,\ldots,M\}$, $u^j:=\xi_j u \in \cK^1_{p,\theta,0}(\cD_j,T)$ with
\begin{equation}
     \label{eqn 4.10.11}
  du^j=\big((\xi_jf^i)_{x^i}+\xi_jf^0-(\xi_j)_{x^i}f^i\big)\,dt+ \xi_jg^k\, dw^k_t, \quad t\in(0,T],
  \end{equation}
on $\cD_j$ in the sense of distributions.
Thus, by Lemma~\ref{lem 4.5.1} (see also Remark~\ref{remark cones}) and (\ref{eqn 4.10.10}),
  $ u^j \in L_p(\Omega; \cont([0,T];L^{[\circ]}_{p,\theta}(\cO)))$, and 
\begin{align}
\E \sup_{t\leq T} \|\xi_j u\|^p_{L^{[\circ]}_{p,\theta}(\cO)}
\leq &N\Big( \|\xi_j f^0\|^p_{\bL^{[\circ]}_{p,\theta+p}(\cO,T)}+\sum_i\|\xi_j f^i\|^p_{\bL^{[\circ]}_{p,\theta}(\cO,T)}\label{eqn 4.10.13}\\
  &+ \|\xi_ju\|^p_{\bwso^{1}_{p,\theta-p}(\cO,T)}+\|(\xi_j)_{x^i}\tilde{\rho}f^i \|^p_{\bL^{[\circ]}_{p,\theta}(\cO,T)}  +
  \|\xi_j g\|^p_{\bL^{[\circ]}_{p,\theta}(\cO,T;\ell_2)} \Big). \nonumber
  \end{align}
Also, $u^0:=\xi_0 u\in \frH^1_{p,2,0}(G,T)$ and \eqref{eqn 4.10.11} holds with $j=0$. Thus, by Lemma~\ref{lem krylov} and \eqref{eqn 4.10.12}, $u^0\in  L_p(\Omega; \cont([0,T];L^{[\circ]}_{p,\theta}(\cO)))$, and \eqref{eqn 4.10.13} holds with $j=0$. Therefore, by summing up all these estimates and using above relations, we get the desired result. 
\end{proof}

Our main existence and uniqueness result for the stochastic heat equation on polygons reads as follows.
Recall that in this section $\kappa_0$ denotes the maximum over all  interior angles of the polygon $\cO$.

\begin{thm}[Existence and uniqueness/polygons]
    \label{thm polygon main}
Let $p\geq 2$ and assume that $\theta\in\bR$ satisfies \eqref{eq:range:vertex}. Then for any 
\[
f^0\in \bL^{[\circ]}_{p,\theta+p}(\cO,T), \quad
f^i\in \bL^{[\circ]}_{p,\theta}(\cO,T),\,i=1,2,\quad\text{and}\quad 
g  \in \bL^{[\circ]}_{p,\theta}(\cO,T;\ell_2),
\]
Equation~\eqref{eq:SHE:Intro} on $\cO$ 
has a unique solution $u\in\cK^1_{p,\theta,0}(\cO,T)$. 
Moreover, 
\begin{equation}
 \label{eqn polygon main}
\|u\|_{\bwso^{1}_{p,\theta-p}(\cO,T)}
\leq 
N \,\sgrklam{\|f^0\|_{ \bL^{[\circ]}_{p,\theta+p}(\cO,T)}
+
\sum_i\|f^i\|_ {\bL^{[\circ]}_{p,\theta}(\cO,T)}
+
\|g\|_{ \bL^{[\circ]}_{p,\theta}(\cO,T;\ell_2)}},
\end{equation}
where $N=N(p,\theta,\kappa_0,T)$.
\begin{proof}
\emph{Step 1.}
We first prove that~\eqref{eqn polygon main} holds given that a solution $u\in\cK^1_{p,\theta,0}(\cO,T)$ already  exists, by using corresponding results for the stochastic heat equation on angular domains and on $\cont^1$ domains. This will, in particular, take care of the uniqueness.

 Let $r>0$, $\xi_j$, $\cD_j$, $j=1,\ldots,M$, as well as $\xi_0$ and $G$ be as in the proof of Lemma~\ref{lem for gronwall}. 
A very similar reasoning as therein can be used to verify that $\xi_0u\in\frH^{1}_{p,\theta,0}(G,T)$, $\xi_ju\in\cK^1_{p,\theta,0}(\cD_j,T)$ for $j\geq 1$, and that for all $j\in\{0,1,\ldots,M\}$, 
\begin{align*}
d(\xi_j u)
&=
\grklam{\Delta (\xi_j u)+(-2u(\xi_j)_{x^i}+\xi_j f^i )_{x^i}\\
&\qquad\qquad\qquad\qquad+u\Delta \xi_j-(\xi_j)_{x^i}f^i+\xi_jf^0}\,dt+ \xi_j g^k \,dw^k_t, \quad t\in (0,T].
\end{align*}
Thus, by Theorem~\ref{thm:ex:uni:2DCone} for $j\geq 1$ and by Lemma~\ref{lem 10} for $j=0$ (see also \cite[Theorem~2.9]{Kim2004}), we obtain the estimate for 
$\|\xi_j u\|^p_{\bwso^{1}_{p,\theta-p}(\cO,t)}$ for each $t\leq T$. Then summing up over all $j$ and using \eqref{eqn 4.11.4} and \eqref{eqn 4.10.14}, yields that for each $t\leq T$,
\begin{align}
\|u\|_{\bwso^{1}_{p,\theta-p}(\cO,t)}
&\leq N \sgrklam{
\|u\|^p_{\bL^{[\circ]}_{p,\theta}(\cO,t)}
+\label{2018-7}
\|f^0\|^p_{\bL^{[\circ]}_{p,\theta+p}(\cO,T)}\\
&\qquad\qquad\qquad\qquad +
\sum_i\|f^i\|^p_{\bL^{[\circ]}_{p,\theta}(\cO,T)}
+
\|g\|^p_{\bL^{[\circ]}_{p,\theta}(\cO,T;\ell_2)} }. \nonumber 
\end{align}
Recall that
$$
du=(f^0+(f^i+u_{x^i})_{x^i})\,dt+g^k\,dw^k_t, \quad t\leq T,
$$
and $\|u_x\|_{\bL^{[\circ]}_{p,\theta}(\cO,s)}\leq N \|u\|_{\bwso^1_{p,\theta-p}(\cO,s)}$. 
Thus, by Lemma~\ref{lem for gronwall} and \eqref{2018-7}, for each $t\leq T$,
\begin{align*} 
\|u\|^p_{\bwso^{1}_{p,\theta-p}(\cO,t)} 
&\leq N \int^t_0  \|u\|^p_{\bwso^{1}_{p,\theta-p}(\cO,s)} \,ds\\
&\qquad + N\sgrklam{\|f^0\|^p_{\bL^{[\circ]}_{p,\theta+p}(\cO,T)}+ \sum_i\|f^i\|^p_{\bL^{[\circ]}_{p,\theta}(\cO,T)}
  +  \|g\|^p_{\bL^{[\circ]}_{p,\theta}(\cO,T;\ell_2)}}.
\end{align*}
Hence the desired estimate follows by Gronwall's inequality.

\smallskip
 
\noindent\emph{Step 2.}  We prove existence as follows.
 Due to Lemma~\ref{lem for gronwall} 
and the a-priori estimate obtained in Step~1, we may assume $f^0$, $f^i$, $i=1,2$, and $g$ are very nice  in the sense that they vanish near the boundary and 
\[
f^i, f^i_{x^i}, f^0\in L_2(\Omega_T,\cP; L_2(\cO)), \quad \text{and}\quad g \in L_2(\Omega_T,\cP; L_2(\cO;\ell_2)).
\]
Then, by classical results (see, for instance, \cite{Roz1990} or \cite[Theorem~2.12]{Kim2014}), there exists a unique solution $u$ in $\frH^1_{2,2,0}(\cO,T)$, which satisfies, in particular,  
\begin{equation}
   \label{eqn 4.11.7}
\rho^{-1}u, u_{x^i} \in L_2(\Omega_T,\cP;L_2(\cO)),\;i=1,2, \;\text{ and }\; \sup_x |u|\leq N \|u_{x}\|_{L_2(\cO)}.
 \end{equation}
Note that for each $j \geq 1$,
\begin{equation}
\label{eqn 4.11.10}
d(\xi_j u)=\grklam{\Delta (\xi_j u)+f^{j,i}_{x^i}+f^{j,0} }\,dt+ \xi_j g^k \,dw^k_t, \quad t\in(0,T],
\end{equation}
on $\cD_j$, where, due to \eqref{eqn 4.11.7} and the fact that $(\xi_j)_{x^i}=0$ near the vertex $v_j$,
\begin{equation}
\label{fji}
f^{j,i}:=-2(\xi_j)_{x^i} u+\xi_jf^i \in \bL^{[\circ]}_{p,\theta}(\cD_j,T) \cap \bL^{[\circ]}_{2,2}(\cD_j,T),
\end{equation}
\begin{equation}
\label{fj0}
f^{j,0}:=u\Delta \xi_j+f^0\xi_j  + \sum_i (\xi_j)_{x_i}f^i  \in \bL^{[\circ]}_{p,\theta+p}(\cD_j,T) \cap \bL^{[\circ]}_{2,2+2}(\cD_j,T), 
\end{equation}
and 
\[
\xi_j g \in  \bL^{[\circ]}_{p,\theta}(\cD_j,T;\ell_2) \cap \bL^{[\circ]}_{2,2}(\cD_j,T;\ell_2).
\]
Since $\tilde{\rho}(x) \geq   \rho(x)$,  it follows that for each for $j\geq 1$ we have $\xi_ju\in \cK^1_{2,2,0}(\cO,T)$.  Thus, by  Lemma \ref{lem for uniqueness}, we conclude $\xi_ju\in \cK^1_{p,\theta,0}(\cO,T)$ if $j\geq 1$. Similar arguments based on Lemma~
\ref{lem for uniqueness2}  yield that $\xi_0 u\in \frH^1_{p,2,0}(G,T)$. Therefore, $\xi_0 u\in  \cK^1_{p,\theta,0}(\cO,T)$ (see~\eqref{eqn 4.11.1}), and consequently 
$u\in  \cK^1_{p,\theta,0}(\cO,T)$.
\end{proof}
\end{thm}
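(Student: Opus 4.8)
The plan is to follow the standard two-step scheme of the analytic $L_p$-approach: first establish the a-priori bound~\eqref{eqn polygon main} for an arbitrary solution (which simultaneously settles uniqueness), and then construct a solution by approximation, starting from a classical $L_2$-solution. Both steps rest on the localization already set up for Lemma~\ref{lem for gronwall}: fix $r>0$ so small that each ball $B_{3r}(v_j)$ meets $\partial\cO$ only in the two edges adjacent to $v_j$, take cut-offs $\xi_j$ supported in $B_{2r}(v_j)$ for $j\geq 1$ and $\xi_0:=1-\sum_{j\geq 1}\xi_j$ supported away from all vertexes, let $\cD_j$ be the angular domain coinciding with $\cO$ on $B_{3r}(v_j)$, and let $G$ be a bounded $\cont^1$ domain with $\mathrm{supp}\,\xi_0\subset G\subset\cO$ and $\inf_G\tilde\rho>0$. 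The whole point of this decomposition is that $\xi_j u$ is supported in the angular domain $\cD_j$ for $j\geq 1$, where Theorem~\ref{thm:ex:uni:2DCone} applies, whereas $\xi_0 u$ lives on the $\cont^1$ domain $G$, where Lemma~\ref{lem 10} applies.

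For the a-priori estimate I would assume a solution $u\in\cK^1_{p,\theta,0}(\cO,T)$ is given and compute that each $\xi_j u$ solves a heat equation on $\cD_j$ (resp.\ $G$) driven by $\xi_j f^0$, $\xi_j f^i$ and $\xi_j g$, together with the commutator terms $u\,\Delta\xi_j$ and $u\,(\xi_j)_{x^i}$ produced by $[\Delta,\xi_j]$. Applying Theorem~\ref{thm:ex:uni:2DCone} to the pieces with $j\geq 1$ and Lemma~\ref{lem 10} (with $d=2$) to the piece $j=0$, then summing over $j$ and invoking the norm equivalences~\eqref{eqn 4.11.4} and~\eqref{eqn 4.10.14}, I expect to arrive at
\[
\|u\|^p_{\bwso^{1}_{p,\theta-p}(\cO,t)}
\leq
N\big(\|u\|^p_{\bL^{[\circ]}_{p,\theta}(\cO,t)}+F\big),\qquad t\leq T,
\]
where $F:=\|f^0\|^p_{\bL^{[\circ]}_{p,\theta+p}(\cO,T)}+\sum_i\|f^i\|^p_{\bL^{[\circ]}_{p,\theta}(\cO,T)}+\|g\|^p_{\bL^{[\circ]}_{p,\theta}(\cO,T;\ell_2)}$ and the $\|u\|^p_{\bL^{[\circ]}_{p,\theta}}$ term is precisely the cost of the commutators. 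The main obstacle is that this zeroth-order term in $u$ cannot be absorbed into the left-hand side: the commutator contributions are genuine, not small. The resolution is to feed in Lemma~\ref{lem for gronwall}, which controls $\|u\|^p_{\bL^{[\circ]}_{p,\theta}(\cO,t)}$ by $N\int_0^t\big(\|u\|^p_{\bwso^1_{p,\theta-p}(\cO,s)}+F\big)\,ds$; substituting and applying Gronwall's inequality then yields~\eqref{eqn polygon main}. This maneuver is legitimate only because the constants in Theorem~\ref{thm:ex:uni:2DCone} and Lemma~\ref{lem 10} are independent of the time horizon, so that the iteration over $[0,t]$ closes. Uniqueness follows at once by applying~\eqref{eqn polygon main} to the difference of two solutions with vanishing data.

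For existence, the a-priori estimate reduces matters, via density, to very regular data, say $f^0,f^i,f^i_{x^i},g$ smooth, vanishing near $\partial\cO$, and lying in the $L_2$-based spaces. Classical $L_2$-theory (e.g.\ \cite{Roz1990} or \cite[Theorem~2.12]{Kim2014}) then furnishes a unique solution $u\in\frH^1_{2,2,0}(\cO,T)$ satisfying in addition $\rho^{-1}u,u_{x^i}\in L_2$ and $\sup_x|u|<\infty$. Localizing once more, the forcing terms of the equation for $\xi_j u$ (again $\xi_j f^0$, $\xi_j f^i$, $\xi_j g$ and the commutators) lie simultaneously in the $L_2$-weighted class and in the $(p,\theta)$-weighted class; this is where the boundedness and the gained regularity of $u$, together with $\tilde\rho\geq\rho$, are essential. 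I would then invoke the cross-scale identification results — Lemma~\ref{lem for uniqueness} on each angular piece $\xi_j u$ ($j\geq 1$) and Lemma~\ref{lem for uniqueness2} on the $\cont^1$ piece $\xi_0 u$ — to conclude that every $\xi_j u$ belongs to the desired $(p,\theta)$-class, hence $u\in\cK^1_{p,\theta,0}(\cO,T)$, and finally remove the regularity restriction on the data by an approximation controlled by~\eqref{eqn polygon main}. The second genuinely delicate point is exactly this upgrade step: one must verify that the localized data live in two scales at once so that the identification lemmas can transport the $L_2$-solution into the weighted class.
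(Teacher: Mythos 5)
Your proposal is correct and follows essentially the same route as the paper: the same partition of unity adapted to the vertexes, Theorem~\ref{thm:ex:uni:2DCone} on the angular pieces and Lemma~\ref{lem 10} on the $\cont^1$ piece, Lemma~\ref{lem for gronwall} plus Gronwall to close the a-priori estimate, and the two cross-scale identification lemmas (Lemma~\ref{lem for uniqueness} and Lemma~\ref{lem for uniqueness2}) to upgrade the classical $L_2$-solution to the weighted $(p,\theta)$-class. No gaps; the two points you flag as delicate (non-absorbable commutator terms, and the two-scale membership of the localized data) are exactly the ones the paper's proof addresses.
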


\begin{remark}
\label{remark 8.24}
 Note that even if we were to consider Equation~\eqref{eq:SHE:Intro} on $\domain$ with $f^i=0$, $i=1,2$, our proof strategy for Theorem~\ref{thm polygon main} (and Theorem~\ref{thm_polygons_1} below) requires that we are able to handle the localized equation on $\cD_j$ with forcing term $((\xi_j)_{x^i}u)_{x^i}$, which means that we have to be able to treat Equation~\eqref{eq:SHE:Intro} on angular domains with $f^i\neq 0$.  
This is why we need the extension of \cite[Theorem~3.7]{CioKimLee+2018} presented in Theorem~\ref{thm:ex:uni:2DCone} even for the proof of Theorem~\ref{thm polygon main} with $f^i=0$, $i=1,2$.
\end{remark}

We conclude with our main higher order regularity result for the stochastic heat equation on polygons.

\begin{thm}[Higher order regularity/polygons]\label{thm_polygons_1}
 Given the setting of Theorem~\ref{thm polygon main}, let $u$ be the unique solution in the class $\cK^{1}_{p,\theta,0}(\cO,T)$ to Equation~\eqref{eq:SHE:Intro} on $\cO$.  
Assume that 
\begin{align*}
C(m,\theta, f^i,f^0,g)
&:=
\E \int^T_0 \int_{\cO} \ssgrklam{\sum_{|\alpha|\leq (m-1)\vee 0\icp} |\rho^{\abs{\alpha}+1}D^{\alpha}f^0|^p+ \sum_i\sum_{|\alpha|\leq m} |\rho^{|\alpha|}D^{\alpha}f^i|^p\\
&\qquad\qquad\qquad\qquad+|\tilde{\rho}f^0|^p+
\sum_{\abs{\alpha}\leq m} |\rho^{\abs{\alpha}}D^\alpha g|_{\ell_2}^p} \tilde{\rho}^{\theta-2}\, dx\,dt <\infty,
\end{align*}
 for some $m\in\{0,1,2,\ldots\}$.
Then
\begin{equation}
    \label{eqn final}
\E\int_0^T \sum_{\abs{\alpha}\leq m+1}\int_{\cO} \Abs{\rho^{\abs{\alpha}-1}D^\alpha u}^p \tilde{\rho}^{\theta-2}\,dx\,dt\leq N\, C(m,\theta, f^i,f^0,g),
\end{equation}
where $N=N(p,\theta,\kappa_0,m, T)$.
\end{thm}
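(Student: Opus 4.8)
The plan is to prove \eqref{eqn final} by induction on $m$, transferring the higher order regularity from angular domains (Corollary~\ref{high}) and from $\cont^1$ domains (Lemma~\ref{lem 10}) to the polygon by localization. I would reuse the partition of unity $\{\xi_j\}_{j=0}^M$ from the proof of Lemma~\ref{lem for gronwall} and Theorem~\ref{thm polygon main}: for $j\ge1$ the cut-off $\xi_j$ is supported in $B_{2r}(v_j)$ and equals $1$ near $v_j$, while $\xi_0=1-\sum_{j\ge1}\xi_j$ is supported where $\tilde{\rho}\ge r$. The two geometric facts that make the transfer work are that on $B_{3r}(v_j)$ the polygon $\cO$ coincides with the angular domain $\cD_j$ with $\tilde{\rho}(x)=|x-v_j|$ on $\mathrm{supp}\,\xi_j$, and that on the annuli $\{r<|x-v_j|<2r\}$, where all the cut-offs have their derivatives, we have $r\le\tilde{\rho}\le2r$, i.e. $\tilde{\rho}\sim1$.

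For each $j$ the function $\xi_j u$ solves, in the sense of distributions,
\begin{align*}
d(\xi_j u)=\grklam{\Delta(\xi_j u)+(-2u(\xi_j)_{x^i}+\xi_j f^i)_{x^i}+u\Delta\xi_j-(\xi_j)_{x^i}f^i+\xi_j f^0}\,dt+\xi_j g^k\,dw^k_t,
\end{align*}
on $\cD_j$ for $j\ge1$ and on a bounded $\cont^1$ domain $G\supset\mathrm{supp}\,\xi_0$ for $j=0$. I would apply Corollary~\ref{high} to $\xi_j u$ on $\cD_j$ ($j\ge1$); this is legitimate because $\theta$ satisfies \eqref{eq:range:vertex} for $\kappa_0$ and hence for every $\kappa_j\le\kappa_0$. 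For $j=0$ I would apply Lemma~\ref{lem 10} with $\Theta=d=2$, noting that $\|\xi_0 u\|^p_{\bH^{m+1}_{p,2-p}(G,T)}=\E\int_0^T\sum_{|\alpha|\le m+1}\int_G|\rho^{|\alpha|-1}D^\alpha(\xi_0 u)|^p\,dx\,dt$, which is exactly the left side of \eqref{eqn final} localized to $G$, since $\tilde{\rho}\sim1$ there. Summing over $j$ and using the comparability relations \eqref{eqn 4.11.4} and \eqref{eqn 4.10.14} together with their $m$-th order analogues reproduces the full left side of \eqref{eqn final}.

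It remains to bound the right sides. The forcing contributions free of $u$ — $\xi_j f^0$, $\xi_j f^i$, $(\xi_j)_{x^i}f^i$ and $\xi_j g$ — are dominated by $N\,C(m,\theta,f^i,f^0,g)$ via Lemma~\ref{collection}\ref{col:multiplier}, the identity $\tilde{\rho}=|x-v_j|$ on $\mathrm{supp}\,\xi_j$ and $\tilde{\rho}\sim1$ on $G$. The order-zero norm $\|\xi_0 u\|_{\bL_{p,2}(G,T)}$ appearing in Lemma~\ref{lem 10} is controlled by the a priori estimate \eqref{eqn polygon main}. The genuinely new terms are those in which $u$ is differentiated against a cut-off, namely $u(\xi_j)_{x^i}$ (entering the $f^i$-type data, with up to $m$ derivatives of $u$) and $u\Delta\xi_j$ (entering the $f^0$-type data, with up to $(m-1)\vee0$ derivatives). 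These are supported in the annuli, where $\tilde{\rho}\sim1$ and $\rho$ is bounded, so by the Leibniz rule each is bounded by a constant times $\sum_{|\alpha|\le m}|\rho^{|\alpha|-1}D^\alpha u|^p\tilde{\rho}^{\theta-2}$. For $m\ge1$ the induction hypothesis — \eqref{eqn final} with $m-1$ in place of $m$, whose assumption $C(m-1,\ldots)\le C(m,\ldots)<\infty$ holds by monotonicity of $C$ in $m$ — bounds the space-time integral of this quantity by $N\,C(m-1,\ldots)\le N\,C(m,\ldots)$; for $m=0$ only order-zero derivatives occur and they are bounded by \eqref{eqn polygon main}. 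Assembling the bounds proves \eqref{eqn final}.

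The step I expect to be the main obstacle is the circular appearance of $u$ in the localized data: the terms $u(\xi_j)_{x^i}$ and $u\Delta\xi_j$ feed derivatives of $u$ of order up to $m$ back into the right hand side of the order-$(m+1)$ estimate. This is exactly what forces the induction on $m$, and it closes only because the derivatives of the cut-offs are supported away from every vertex, so these terms never interact with the corner singularity and may be measured in the unweighted $\cont^1$ scale. A secondary, purely bookkeeping, difficulty is keeping track of the different weights — $\rho^{|\alpha|}$ versus $\rho^{|\alpha|-1}$ and $\tilde{\rho}$ versus $\rho$ — when passing between the angular-domain norms on $\cD_j$, the $\cont^1$-norms on $G$ and the target norm on $\cO$; this is routine given Lemma~\ref{collection} and the relations \eqref{eqn 4.11.4}, \eqref{eqn 4.10.14}.
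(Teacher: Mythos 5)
Your proposal is correct and follows essentially the same route as the paper: induction on $m$, the same partition of unity $\{\xi_j\}$, Corollary~\ref{high} on the angular domains $\cD_j$ near the vertexes and a $\cont^1$-domain estimate for the $\xi_0$ piece (you invoke Lemma~\ref{lem 10}, the paper cites \cite[Theorem~2.9]{Kim2004}, which is what that lemma packages), with the induction hypothesis absorbing the commutator terms $u(\xi_j)_{x^i}$ and $u\Delta\xi_j$ exactly as in the paper. The only cosmetic difference is that your induction index is shifted by one relative to the paper's.
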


\begin{proof}
We prove the statement by induction over $m$. As in the proof of the results above, we use a partition of unity and apply corresponding results for the stochastic heat equation on angular domains (Corollary~\ref{high}) and on $\cont^1$ domains (\cite[Theorem~2.9]{Kim2004}) to estimate the solutions of the localized equations.

Let $r>0$, $\xi_j$, $\cD_j$, $j=1,\ldots,M$, as well as $\xi_0$ and $G$ be as in the proof of Lemma~\ref{lem for gronwall}.
In addition, assume that $G\subset\domain$ is chosen in such a way that
\[
\domain\setminus\bigcup_{j}B_{2r/3}(v_j) \subseteq G\subseteq \domain\setminus\bigcup_{j}B_{r/3}(v_j).
\]
As a consequence, 
\begin{equation}\label{eq:equiv:dist}
\rho_G\sim\rho_\cO\quad \text{and}\quad \tilde\rho\sim 1\quad \text{on}\quad\textup{supp}\,\xi_0\cap\domain.
\end{equation}
  
\smallskip
 
\noindent\emph{Step 1. The base case.} Let $m=0$. 
Note that in this case, the only difference in Estimate~\eqref{eqn final} compared to~\eqref{eqn polygon main} is the weight we put on $u$ on the left hand side of the inequality: $\rho^{-p}\tilde\rho^{\theta-2}$ in~\eqref{eqn final} instead of the smaller $\tilde\rho^{\theta-p-2}$ from~\eqref{eqn polygon main}.
But to obtain this sharper estimate we argue in a very similar fashion as in the proof of the latter with two changes: 
We use~Corollary~\ref{high} instead of Theorem~\ref{thm:ex:uni:2DCone} to estimate the solution in the vicinity of vertexes and we use the slightly modified choice of $G$ and~\eqref{eq:equiv:dist} to replace $\rho_G$ by $\rho_\domain$ after applying~\cite[Theorem~2.9]{Kim2004} to estimate the solution away from the vertexes. 
In detail, we argue as follows: The same reasoning as in the proof of Theorem~\ref{thm polygon main} shows that $\xi_0 u\in \frH^1_{p,\theta,0}(G,T)$ and $\xi_j u\in\cK^1_{p,\theta,0}(\cD_j,T)$ for $j\geq 1$ satisfy~\eqref{eqn 4.11.10} on $G$ and on $\cD_j$, $j\geq 1$, respectively. 
In particular, if $1\leq j\leq M$, then 
by Corollary~\ref{high} (see also Remark~\ref{remark cones}), Estimate~\eqref{eqn final} holds  with $\xi_j u$ and $C(0, \theta,f^{j,i}, f^{j,0},\xi_jg)$ in place of $u$ and $C(0,\theta,f^i,f^0, g)$, respectively. Here $f^{j,i}$ and $f^{j,0}$ are taken from \eqref{fji} and \eqref{fj0}.
Moreover, by the corresponding result on $\cont^1$ domains (see \cite[Theorem~2.9]{Kim2004}) and \eqref{eq:equiv:dist}, Estimate~\eqref{eqn final} also holds for $\xi_0u$ and  $C(0,\theta, f^{0,i}, f^{0,0},\xi_0g)$ in place of $u$ and $C(0,\theta,f^i,f^0, g)$, respectively.  
 Summing up all these estimates and using the second relationship in~\eqref{eq:equiv:dist} yields 
\begin{align*}
\E\int_0^T \sum_{\abs{\alpha}\leq 1} \int_{\cO} \Abs{\rho^{\abs{\alpha}-1}D^\alpha u}^p \tilde{\rho}^{\theta-2}\,dx\,dt
&\leq N \sum_{j=0}^{M} C(0,\theta, f^{j,i},f^{j,0},\xi_j g)\\
&\leq N \,\|u\|^p_{\bL^{[\circ]}_{p,\theta-p}(\cO)}+ N C(0,\theta,f^i,f^0,g) \\
&\leq N\, C(0,\theta,f^i,f^0,g);
\end{align*}
the last inequality above is due to \eqref{eqn polygon main}. The base case is proved.

\smallskip
 
\noindent\emph{Step 2. The induction step.} Suppose  that \eqref{eqn final} holds for some $m\in\{0,1,\ldots\}$ and $C(m+1,\theta,f^i,f^0,g)<\infty$. Then, by assumption,
\begin{equation}\label{eqn 4.13.2.a}
\E\int_0^T  \sum_{\abs{\alpha}\leq m+1}\int_{\cO} \Abs{\rho^{\abs{\alpha}-1}D^\alpha u}^p \tilde{\rho}^{\theta-2}\,dx\,dt\leq N C(m,\theta, f^i,f^0,g).
\end{equation}
Using \eqref{eqn 4.13.2.a}, one can easily check that
 $$
 \sum_{j=0}^{M} C(m+1,\theta,f^{j,i}, f^{j,0},\xi_j g) \leq N\, C(m+1,\theta,f^i,f^0,g).
 $$ 
Therefore, appropriate applications of Corollary~\ref{high} (see also Remark~\ref{remark cones}) and \cite[Theorem~2.9]{Kim2004} yield suitable estimates of $\sum_{|\alpha|\leq m+2} \E \int^T_0 |\rho^{|\alpha|-1}D^{\alpha}(\xi_j u)|^p \tilde{\rho}^{\theta-2}dxdt$ for $j=0$ and $1\leq j\leq M$,  respectively, which, summed up, yield
\begin{align*}
\E\int_0^T \sum_{\abs{\alpha}\leq m+2} \int_{\cO}& \Abs{\rho^{\abs{\alpha}-1}D^\alpha u}^p \tilde{\rho}^{\theta-2}\,dx\,dt\\
& \leq N \sum_{j=0}^{M}
 C(m+1,\theta,f^{j,i},f^{j,0},\xi_j g)  \leq N\, C(m+1,\theta,f^i,f^0,g). 
\end{align*}
Thus the induction goes through and the theorem is proved.
\end{proof}

\providecommand{\bysame}{\leavevmode\hbox to3em{\hrulefill}\thinspace}
\providecommand{\MR}{\relax\ifhmode\unskip\space\fi MR }
\providecommand{\MRhref}[2]{%
  \href{http://www.ams.org/mathscinet-getitem?mr=#1}{#2}
}
\providecommand{\href}[2]{#2}

\end{document}